
\pdfoutput=1
\synctex=1
\documentclass[final,US,11pt,reqno,bibtex,nopagebackref,notcite,notref]{amsart}
\usepackage{liao}
\usepackage{comment}

\DeclareMathOperator{\tw}{tw}

\DeclareMathOperator{\pbw}{pbw}
\DeclareMathOperator{\pr}{pr}

\DeclareMathOperator{\Hom}{Hom}
\DeclareMathOperator{\RHom}{RHom}
\DeclareMathOperator{\coHom}{coHom}
\DeclareMathOperator{\Ext}{Ext}

\DeclareMathOperator{\End}{End}
\DeclareMathOperator{\Der}{Der}
\DeclareMathOperator{\Ber}{Ber}

\DeclareMathOperator{\coDer}{coDer}
\DeclareMathOperator{\tot}{tot}
\DeclareMathOperator{\id}{id}

\DeclareMathOperator{\CE}{\scriptscriptstyle{CE}}
\DeclareMathOperator{\ad}{ad}
\DeclareMathOperator{\td}{td}
\DeclareMathOperator{\at}{at}
\DeclareMathOperator{\Todd}{Td}

\DeclareMathOperator{\dec}{\textsf{d\'ec}}
\DeclareMathOperator{\rev}{rev}
\DeclareMathOperator{\sym}{sym}
\DeclareMathOperator{\hkr}{hkr}
\DeclareMathOperator{\poly}{poly}

\newcommand{\ssym}{\widetilde{\sym}}

\newcommand{\ZZ}{\mathbb{Z}} 
 
\newcommand{\RR}{\mathbb{R}} 
\newcommand{\CC}{\mathbb{C}} 
\newcommand{\cC}{\mathcal{C}}

\newcommand{\cU}{\mathcal{U}}

\newcommand{\KK}{\Bbbk}
\newcommand{\into}{\hookrightarrow}
\newcommand{\onto}{\twoheadrightarrow}
\newcommand{\xto}[1]{\xrightarrow{#1}}
\newcommand{\dual}{^{\vee}}

\newcommand{\inv}{^{-1}}

\newcommand{\argument}{-}
\newcommand{\pair}[2]{\langle \, #1 \mid #2 \,  \rangle}
\newcommand{\hcoh}{\mathbb{H}}

\newcommand{\frakg}{\mathfrak g}
\newcommand{\frakh}{\mathfrak h}
\newcommand{\grHH}{\mathrm{HH}_{\scriptscriptstyle{\oplus}}}
\newcommand{\grHHcx}{\mathrm{Hoch}_{\scriptscriptstyle{\oplus}}}
\newcommand{\pdHH}{\mathrm{HH}_{\scriptscriptstyle{\prod}}}
\newcommand{\pdHHcx}{\mathrm{Hoch}_{\scriptscriptstyle{\prod}}}
\newcommand{\HHcx}{\mathrm{Hoch}}

\newcommand{\hochschild}{d_{\mathcal{H}}}
\newcommand{\gerstenhaber}[2]{\llbracket #1, #2 \rrbracket}
\newcommand{\schouten}[2]{[ #1, #2 ]_S}
\newcommand{\Tpoly}{{_{\scriptscriptstyle{\oplus}}}{\mathcal{T}}_{\poly}}
\newcommand{\Dpoly}{{_{\scriptscriptstyle{\oplus}}}{\mathcal{D}}_{\poly}}

\newcommand{\pTpoly}{{\mathcal{T}}_{\poly}}

\newcommand{\op}[1]{{#1}^{\mathrm{op}}}

\DeclareMathOperator{\Cone}{Cone}
\newcommand{\cone}[1]{\Cone({#1})}

\newcommand{\suspend}{\mathrm{s}} 
\newcommand{\dsuspend}{\mathfrak{s}} 
\newcommand{\Tcoder}[1]{D_{#1}} 

\newcommand{\degree}[1]{|{#1}|}

\newcommand{\conv}{\star}

\newcommand{\sections}[1]{\Gamma\left(#1 \right)}
\newcommand{\tangent}[1]{T_{#1}}

\newcommand{\cM}{\mathcal{M}}

\newcommand{\cD}{\mathcal{D}}

\newcommand{\HmtpA}{\ssym}
\newcommand{\HmtpB}[1]{\mathcal{J}_{#1}}
\newcommand{\HmtpC}{\phi}
\DeclareMathOperator{\sgn}{sgn}

\usepackage{scalerel}

\newcommand{\action}{{\,\scriptscriptstyle\blacklozenge\,}}
\newcommand{\Sgaction}{{\lrcorner\,}}
\newcommand{\Sgvaction}{{\llcorner\,}}

\newenvironment{pfdsKellerThm}{\textit{Proof of Theorem~\ref{thm:d.s.KellerThm}.}}{\hfill$\square$}

\title{Keller admissible triples and Duflo theorem}

\author{Hsuan-Yi Liao
}
\address{Department of Mathematics, National Tsing Hua University}
\email{hyliao@math.nthu.edu.tw} 

\author{
Seokbong Seol}
\address{School of Mathematics, Korea Institute for Advanced Study}
\email{azuredream89@kias.re.kr} 

\thanks{Research partially supported by NSF grants DMS-1707545 and DMS-2001599, KIAS Individual Grant MG072801 and MG090801, and MOST/NSTC Grant 110-2115-M-007-001-MY2.}

\begin{document}

\begin{abstract}
 The present paper is devoted to the study of Keller admissible triples. We prove that a Keller admissible triple induces an isomorphism of Gerstenhaber algebras between  the Hochschild cohomologies of direct-sum type of the pair of differential graded algebras bound to one another by the admissible triple. As an application, we give a new concrete proof of the Duflo--Kontsevich theorem for finite-dimensional Lie algebras. 
 \end{abstract}

\maketitle

\tableofcontents

\section*{Introduction}

Let $A$ and $B$ be dg (i.e.\ differential graded) algebras over a field $\KK$. It is a theorem of Keller \cite{keller2003derived} that, if there exists a dg $A$-$B$-bimodule $X$ and the induced canonical morphisms 
\begin{gather*}
 A \to \RHom_{\op B}(X,X), \\
\op B \to \RHom_{A}(X,X)
\end{gather*}
are quasi-isomorphisms, then the Hochschild cohomologies of $A$ and $B$ are isomorphic as Gerstenhaber algebras. In most of the literature, including \cite{keller2003derived}, the Hochschild cohomology of $A$ is defined by derived functors and is equal to the cohomology $\pdHH^\bullet(A)$ of the \emph{product-total complex} 
$$
\Big(\prod_{p+r = \bullet} \Hom^{r}(A^{\otimes p}, A), \, \hochschild + \partial_A \Big).
$$ 
However, in this paper, we are interested in the cohomology $\grHH^\bullet(A)$ of the \emph{sum-total complex} 
$$
\Big(\bigoplus_{p+r = \bullet} \Hom^{r}(A^{\otimes p}, A), \, \hochschild + \partial_A \Big).
$$  
The second kind of Hochschild cohomology plays a significant role in formality theorem and Duflo--Kontsevich-type theorem for dg manifolds \cite{MR3754617,MR2986860, MR2816610, 1998math.....12009S}. 
Here, a dg manifold (i.e. differential graded manifold) means a $\ZZ$-graded manifold equipped with a homological vector field. 
The notion of dg manifolds has recently drawn attention for its connections with various fields in mathematics including Lie theory, homotopy Lie algebras, foliations, higher Lie algebroids and derived differential geometry \cite{2020arXiv200601376B,MR2840338, MR2819233, MR3877426, MR4504932, MR4164730, MR4352591, MR3313214, MR2768006, MR2971727}.  
In addition to formality theorem and Duflo--Kontsevich-type theorem for dg manifolds, the second type Hochschild cohomology is also important in matrix factorization \cite{MR2931331}.

The sum Hochschild cohomology behaves significantly differently from the product Hochschild cohomology and is not well-behaved under homotopy equivalences. See Example~\ref{ex:SumProdHH} (and Example~\ref{ex:KellerAdmissibleSumHoch}) for an instance of a dg algebra $A$ whose Hochschild cohomologies $\pdHH^{\bullet}(A)$ and $\grHH^{\bullet}(A)$ are different. 
In this paper, Hochschild complexes/cohomologies means the \emph{sum} Hochschild complexes/cohomologies unless otherwise stated. 

Inspired by Keller's theorem, we introduce a notion of \emph{Keller admissible triples} and prove a sum Hochschild cohomology version of Keller's result.

For us, a Keller admissible triple is a triple $(A,X,B)$, where $A$ and $B$ are dg algebras, and $X$ is a dg $A$-$B$-bimodule, such that the action maps
\begin{gather*}
\rho_A: (A,d_A) \to  \big(\Hom_{\op B}(X,X), \partial_X\big) , \\
\rho_B: (\op B,d_B) \to  \big(\Hom_A(X,X), \partial_X \big)
\end{gather*}
are weakly cone-nilpotent quasi-isomorphisms, which we define in Section~\ref{sec:WCN}, and the partial Hochschild complexes 
\begin{gather*}
0 \to \Hom_{\op B}(X,X) \into \Hom(X,X) \xto{{\hochschild^X}_R} \Hom(X \otimes B, X) \xto{{\hochschild^X}_R} \Hom(X \otimes B^{\otimes 2}, X) \xto{{\hochschild^X}_R} \cdots   \\
0 \to \Hom_{A}(X,X) \into \Hom(X,X) \xto{{\hochschild^X}_L} \Hom(A \otimes X, X) \xto{{\hochschild^X}_L} \Hom(A^{\otimes 2} \otimes X, X) \xto{{\hochschild^X}_L} \cdots 
\end{gather*}
are exact. Essentially, such a triple can be considered as a kind of Morita equivalence between the dg algebras $A$ and $B$.

For a dg $A$-$B$-bimodule $X$, Keller constructed a dg category (with two objects) and studied its Hochschild cohomology $\pdHH^{\bullet}(X)$ which can be obtained as the cohomology of a product-total complex $\pdHHcx^{\bullet}(X)$ \cite{keller2003derived,MR2275593,MR2628794}.
Instead of the product-total complex, we consider the sum Hochschild complex $\grHHcx^\bullet(X)$ which is equipped with a pair of natural projections $\pi_A:\grHHcx^\bullet(X) \onto \grHHcx^\bullet(A)$ and $\pi_B:\grHHcx^\bullet(X) \onto \grHHcx^\bullet(B)$. Our main result is the following:
\begin{trivlist}
\item {\bf Theorem A.} {\it For a Keller admissible triple $(A,X,B)$, the pair of natural projections $\pi_A$ and $\pi_B$ induce a pair of isomorphisms of Gerstenhaber algebras $\grHH^{\bullet}(A)\cong \grHH^{\bullet}(X)\cong \grHH^{\bullet}(B)$ between the Hochschild cohomologies.}
\end{trivlist}

Theorem~A provides a way to construct an explicit isomorphism between the Hochschild cohomologies of ``Morita equivalent'' dg algebras. As an application, we obtain, in Section~\ref{sec:KellerTrip-LieAlg} and Section~\ref{sec:KD}, an alternative proof of the Duflo--Kontsevich theorem for finite-dimensional Lie algebras. 

Given a finite-dimensional Lie algebra $\frakg$, the symmetrization map $\pbw:S\frakg \to \cU\frakg$ from the symmetric algebra $S\frakg$ to the universal enveloping algebra $\cU\frakg$ is an isomorphism of vector spaces, called Poincar\'e--Birkhoff--Witt isomorphism.
It induces a vector space isomorphism $\pbw: (S\frakg)^\frakg \to (\cU\frakg)^\frakg$ between the subspaces of $\frakg$-invariants. This isomorphism does not respect the natural associative multiplications on $(S\frakg)^{\frakg}$ and $(\cU\frakg)^{\frakg}$, but it can be modified by the square root of Duflo element so as to obtain an isomorphism of associative algebras. The Duflo element $J \in \widehat{S}(\frakg\dual)$ is the formal power series on $\frakg$ defined by $J(x)= \det\left( \frac{1-e^{-\ad_x}}{\ad_x}\right)$, for all $x \in \frakg$. Its square root $J^{1/2}$ acts on $(S\frakg)^\frakg$ in the manner of a formal differential operator. 
A remarkable theorem due to Duflo \cite{MR444841} asserts that the composition $\pbw\circ J^{1/2}:(S\frakg)^\frakg \to (\cU\frakg)^\frakg$ is an isomorphism of associative algebras. This theorem generalizes a fundamental theorem of Harish--Chandra about the center of the universal enveloping algebra of semi-simple Lie algebras. 

In \cite{MR2062626}, Kontsevich proposed a new proof of Duflo's theorem as an application of his quantization formulas. More precisely, Kontsevich recovered the Duflo isomorphism as the tangent map of his formality morphism at the Lie--Poisson structure on $\frakg\dual$ regarded as a Maurer--Cartan element in the dgla of polyvector fields $\pTpoly^\bullet(\frakg\dual)$ on $\frakg\dual$. This approach led to the Duflo--Kontsevich theorem: the map 
\begin{equation}\label{eq:KDisoLieAlg}
\pbw \circ J^{1/2}: H_{\CE}^\bullet(\frakg, S\frakg) \to H_{\CE}^\bullet(\frakg, \cU\frakg)
\end{equation}
is an isomorphism of graded algebras. The classical Duflo theorem is the isomorphism of cohomologies in degree $0$. A detailed proof of the Duflo--Kontsevich theorem was given by Pevzner--Torossian \cite{MR2085348}.

Shoikhet \cite{1998math.....12009S} conjectured that a Duflo--Kontsevich-type theorem should hold for all finite dimensional dg manifolds.  
Shoikhet's conjecture was proved two decades later by the first author jointly with Sti\'enon and Xu \cite[Theorem~4.3]{MR3754617}: 
for a finite dimensional dg manifold $(\cM,Q)$, the map 
\begin{equation}\label{eq:KDisoforDGmfd}
\hkr \circ \Todd^{1/2}_{(\cM,Q)}: \hcoh^{\bullet} \big( \Tpoly(\cM), \schouten{Q}{\argument} \big) \xto{}   \hcoh^{\bullet}\big( \Dpoly(\cM), \hochschild + \gerstenhaber{Q}{\argument} \big)
\end{equation}
is an isomorphism of graded algebras. 
Here, $\hkr$ is the Hochschild--Kostant--Rosenberg map, and $\Todd_{(\cM,Q)}^{1/2}$ is the square root of the Todd class acting on $\Tpoly(\cM)$ by contraction. 

Shoikhet also suggested that the Duflo--Kontsevich theorem \eqref{eq:KDisoLieAlg} could be recovered by applying \eqref{eq:KDisoforDGmfd} to the dg manifold $(\frakg[1],d_\frakg)$, where the Chevalley--Eilenberg differential $d_\frakg \in \Tpoly^1(\frakg[1])$ is considered as a homological vector field on $\frakg[1]$. 
Indeed, by doing so, we obtain the isomorphism of graded algebras 
$$
\hkr\circ \Todd_{\frakg[1]}^{1/2}: \hcoh^{\bullet} \big( \Tpoly(\frakg[1]), \schouten{d_\frakg}{\argument} \big) \xto{}   \hcoh^{\bullet}\big( \Dpoly(\frakg[1]), \hochschild + \gerstenhaber{d_\frakg}{\argument} \big)
$$ 
from the cohomology of polyvector fields to the cohomology of polydifferential operators on $\frakg[1]$.
The cohomology $\hcoh^{\bullet} \big( \Tpoly(\frakg[1]), \schouten{d_\frakg}{\argument} \big)$ is naturally identified with the Chevalley--Eilenberg cohomology $H_{\CE}^\bullet(\frakg, S\frakg)$, while the cohomology $\hcoh^{\bullet}\big( \Dpoly(\frakg[1]), \hochschild + \gerstenhaber{d_\frakg}{\argument} \big)$ coincides with the Hochschild cohomology $\grHH^\bullet (S(\frakg[1])\dual,d_\frakg)$. 

In order to derive \eqref{eq:KDisoLieAlg}, we deduce an explicit isomorphism $\grHH^\bullet(S(\frakg[1])\dual,d_\frakg) \cong \grHH^\bullet(\cU\frakg)$ of Gerstenhaber algebras from the Keller admissible triple $\big(\cU \frakg, \ \cU\frakg  \otimes S( \frakg[1]) , \  S( \frakg[1])\dual\big)$, where $S( \frakg[1])\dual$, endowed the differential $d_\frakg$, is the Chevalley--Eilenberg algebra of $\frakg$. 
Composition with the standard Cartan--Eilenberg isomorphism $\grHH^\bullet(\cU\frakg) \cong H_{\CE}^\bullet(\frakg,\cU\frakg)$ described in \eqref{eq:HH(Ug)isoHce(Ug)} yields the desired isomorphism of graded algebras 
\[\Phi_D:\hcoh^{\bullet}\big( \Dpoly(\frakg[1]), \hochschild + \gerstenhaber{d_\frakg}{\argument} \big)\cong \grHH^\bullet (S(\frakg[1])\dual,d_\frakg) \xto\cong H_{\CE}^\bullet (\frakg, \cU\frakg).\]
Finally, we prove the following theorem, which implies that the Duflo--Kontsevich map \eqref{eq:KDisoLieAlg} is an isomorphism of graded algebras.

\begin{trivlist}
\item {\bf Theorem B.} {\it Given any finite-dimensional Lie algebra $\frakg$, the diagram
$$
\begin{tikzcd}
\hcoh^{\bullet} \big( \Tpoly(\frakg[1]), \schouten{d_\frakg}{\argument} \big) \arrow[rr, "\hkr\circ \Todd_{\frakg[1]}^{1/2}"] \arrow[d, "\Phi_T"',"\cong"]  & &   \hcoh^{\bullet}\big( \Dpoly(\frakg[1]), \hochschild + \gerstenhaber{d_\frakg}{\argument} \big)  \arrow[d, "\Phi_{D}","\cong"']  \\
  H_{\CE}^\bullet(\frakg, S\frakg ) \arrow[rr, "\pbw \circ J^{1/2}"]  & &  H_{\CE}^\bullet(\frakg, \cU \frakg) 
\end{tikzcd}
$$
commutes. Here, $\Phi_T$ is the natural identification, while $\Phi_D$ is the identification induced by the Keller admissible triple $\big(\cU \frakg, \ \cU\frakg  \otimes S( \frakg[1]) , \  S( \frakg[1])\dual \big)$.
}
\end{trivlist}

In the process of proving the above theorem, we verify that $\Phi_{D} \circ \hkr = \pbw \circ\, \Phi_T$, i.e.\ the Hochschild--Kostant--Rosenberg map for the graded manifold $\frakg[1]$ is identified to the Poincar\'e--Birkhoff--Witt map by way of the Keller admissible triple $\big(\cU \frakg, \ \cU\frakg  \otimes S( \frakg[1]) , \  S( \frakg[1])\dual \big)$.

In fact, by a method of twisting cochains \cite{MR4237031,MR2954392} or by spectral sequences \cite{MR2816610}, it is possible to construct a quasi-isomorphism $\grHHcx^\bullet (S(\frakg[1])\dual,d_\frakg) \to C_{\CE}^\bullet (\frakg, \cU\frakg)$ which induces the isomorphism $\Phi_D$ on the cohomology level. 
In \cite{MR4237031}, based on a theorem of Lowen--Van den Bergh \cite{MR4537334}, Keller proved that there exists an isomorphism of Gerstenhaber algebras between the coHochschild cohomology of the dg coalgebra $\Lambda \frakg$ and the Hochschild cohomology of $ U\frakg$ in terms of Koszul--Moore duality. It would be interesting to explore the relation between Keller admissible triples and Koszul--Moore duality.

Most theorems in the present paper are derived from explicit formulas obtained by computations in coalgebras. For completeness, we summarize the relevant coalgebra techniques in the appendices.

\subsection*{Notations and conventions}

Throughout the paper, the symbol $\KK$ denotes either of the fields $\RR$ or $\CC$, and graded means $\ZZ$-graded. Unless specified otherwise, dg algebra means dg algebra over $\KK$, $\otimes$ means $\otimes_{\KK}$, and $\Hom$ means $\Hom_{\KK}$.

Let $A$ be a graded ring, and let $V$, $W$ be two left (respectively, right) graded $A$-modules. 
The space $\Hom_A(V,W) = \Hom_A^\bullet(V,W)$ (respectively, $\Hom_{\op A}(V,W) = \Hom_{\op A}^\bullet(V,W)$) of morphisms of left (respectively, right) $A$-modules from $V$ to $W$ is naturally $\ZZ$-graded: the symbol $\Hom^r_{A}(V,W)$ (respectively, $\Hom^{r}_{\op A}(V,W)$)  denotes the space of morphisms of left (respectively, right) $A$-modules of degree $r$ from $V$ to $W$. We denote by $V[i]$ the graded $A$-module with the homogeneous components $(V[i])^j = V^{i+j}$.   

Given a homogeneous element $x$ in a graded vector space $V=\bigoplus_{k\in \ZZ}V^{k}$, we write $|x|$ to denote the degree of $x$. Thus $|x|=d$ means that $x\in V^{d}$.

For $x \in V$ and $\xi \in V\dual$, we denote $\pair{\xi}{x}:=\xi(x)$ and $\pair{x}{\xi} := (-1)^{|x||\xi|}\pair{\xi}{x}$. The pairing is extended to a pairing of tensor algebras $\pair\argument\argument : TV \times TV\dual  \to \KK$ by 
$$
\pair{x_1 \otimes \cdots \otimes x_p}{\xi_1 \otimes \cdots \otimes \xi_q} := 
\begin{cases}
(-1)^{\sum_{i=1}^p\sum_{j=i+1}^p|\xi_i| |x_j|}\ \pair{x_1}{\xi_1} \cdots \pair{x_p}{\xi_p}, & \text{if } p = q, \\
0, & \text{if } p \neq q;
\end{cases}
$$
and to a pairing of symmetric algebras $\pair\argument\argument: SV \times SV\dual \to \KK$ by 
$$
\pair{x_1 \odot \cdots \odot x_p}{\xi_1 \odot \cdots \odot \xi_q} := \sum_{\sigma \in S_q} \varepsilon \cdot \pair{x_1 \otimes \cdots \otimes x_p}{\xi_{\sigma(1)} \otimes \cdots \otimes \xi_{\sigma(q)}},
$$
where $\varepsilon = \pm 1$ is the number such that 
$\xi_1 \odot \cdots \odot \xi_q = \varepsilon \cdot \xi_{\sigma(1)} \odot \cdots \odot \xi_{\sigma(q)}$ in $SV\dual$.
Similarly, we also have the pairings $\pair\argument\argument : TV\dual \times TV \to \KK$ and $\pair\argument\argument : SV\dual \times SV \to \KK$.

We denote by $\suspend:V[1] \to V$ the degree-shifting map of degree  $+1$ and by  $\dsuspend:V \to V[1]$ the degree-shifting map of degree $-1$.

\subsection*{Acknowledgments} 
We would like to thank Estanislao Herscovich, Bernhard Keller, Mathieu Sti\'enon, Ping Xu and Bin Zhang for fruitful discussions and useful comments. We are indebted to Leonid Positselski for pointing out an error in the first preprint of the paper. 
We also wish to thank Boris Shoikhet for suggesting the approach to the Duflo theorem via Keller admissible triples. 
Seol is grateful to the Korea Institute for Advanced Study for its hospitality in 2020.

\section{Hochschild complexes of differential graded algebras} \label{sec:HHcxDGAlg}

In this section, we recall the explicit formulas of the Gerstenhaber bracket and the cup product on the Hochschild cochain complex of a differential graded algebra. 
These structures were first introduced by Gerstenhaber \cite{MR161898} for an ungraded ring, and were generalized to dg algebras by various authors in slightly different ways. 
The formulas in this section are obtained by composing the formulas in \cite{MR1261901} with proper degree-shifting maps. More details can be found in Appendix~\ref{appendix:HHcochains}.

Let $(A,d_A)$ and $(B,d_B)$ be dg algebras. Recall that a \emph{dg $A$-$B$-bimodule} $(X,d_X)$ is a graded $A$-$B$-bimodule $X$ together with a differential $d_X$ such that 
$$d_X(a x  b) = d_A(a)xb + (-1)^{|a|}a d_X(x) b + (-1)^{|a|+|x|} a x d_B(b),$$ for  $a \in A,\ x \in X,\ b \in B$.

Let  $(M,d_M)$ be a dg $A$-$A$-bimodule. 
A \textbf{Hochschild cochain} of degree $(p,r)$ of $A$ with values in $M$ is an element in 
$$
\HHcx^{p,r}(A,M):=  \Hom^{r}\big(A^{\otimes p},M \big).
$$
The \textbf{Hochschild cochain complex} of $A$ with values in $M$ is the space 
\begin{equation*}\label{eq:SumTotCxHH}
\grHHcx^\bullet(A,M) := \bigoplus_{p+r = \bullet} \HHcx^{p,r}(A,M)
\end{equation*} 
together with the differential $\hochschild + \partial : \grHHcx^\bullet(A,M) \to \grHHcx^{\bullet +1}(A,M)$, where  
\begin{equation}
\begin{split}
\hochschild(f)(a_0,\cdots, a_{p})  & := (-1)^{(p+r-1) + r\degree{a_0}} a_0 f(a_1, \cdots, a_{p}) \\
& \quad  + \sum_{i=0}^{p-1} (-1)^{p+r+i} f(a_0, \cdots, a_i a_{i+1}, \cdots, a_{p}) \\
&\quad + (-1)^{r} f(a_0, \cdots, a_{p-1})a_{p}
\end{split}
\end{equation} 
and 
\begin{equation}\label{eq:DiffFromDG}
\partial(f)(a_1, \cdots, a_p)  := d_M f(a_1, \cdots, a_p) - (-1)^r \sum_{i=1}^p (-1)^{|a_1| + \cdots + |a_{i-1}|} f ( a_1, \cdots, d_A a_i, \cdots, a_p),
\end{equation}
for $f \in \HHcx^{p,r}(A,M)$, $a_0, \cdots, a_{p} \in A$. 
The cohomology of $\big(\grHHcx^\bullet(A,M), \hochschild + \partial \big)$ is called the \textbf{Hochschild cohomology} of $(A,d_A)$ with values in $(M,d_M)$, denoted by $\grHH^\bullet(A,M)$. In the case $(M,d_M)=(A,d_A)$, we denote $\partial_A = \partial$, $\grHHcx^\bullet(A,d_A)=  \grHHcx^\bullet(A,A)$ and $\grHH^\bullet(A,d_A)=\grHH^\bullet(A,A)$. We will omit $d_A$ in the notations if the differential is clear from the context.

\begin{remark}
In the literature (see e.g. \cite{MR2275593,keller2003derived,MR3941473}),  the Hochschild cohomology of a dg algebra $(A,d_A)$ is defined by derived functors which can be computed by the \emph{product-total complex} of the double complex $\big(\HHcx(A), \hochschild, \partial_A \big)$,
\begin{equation*}\label{eq:ProdTotCxHH}
\pdHHcx^\bullet(A) := \prod_{p+r = \bullet} \HHcx^{p,r}(A)
\end{equation*} 
whose cohomology $\pdHH^\bullet(A)$ is different from $\grHH^\bullet(A)$ in general.  
\end{remark}

The sum Hochschild cohomology $\grHH^{\bullet}(A)$ is sometimes referred as the compactly supported Hochschild cohomology \cite{MR2931331}.

\begin{example}\label{ex:SumProdHH}
Let  $(A,d_A)$ be the dg algebra $(S(\KK[1])\dual, 0) \cong (\KK[x]/(x^2),0) \cong(\KK\oplus \KK x, 0)$, where $x$ is a formal variable of degree one. It is straightforward to show that 
\[ \grHH^{0}(A)\cong \bigoplus_{n=0}^{\infty} \KK \neq \prod_{n=0}^{\infty} \KK\cong \pdHH^{0}(A). \]
\end{example}

Let $f \in \HHcx^{p_1,r_1}(A)$ and $g \in \HHcx^{p_2,r_2}(A)$. 
The \emph{cup product} $f \cup g \in \HHcx^{p_1+p_2,r_1+r_2}(A)$ is defined by the formula 
\begin{equation}\label{eq:CupProd}
(f \cup g)(a_1, \cdots, a_{p_1+p_2}) := (-1)^{p_1p_2+ r_2(|a_1|+\cdots + |a_{p_1}|+p_1)} f(a_1, \cdots, a_{p_1}) \cdot g(a_{p_1+1},\cdots, a_{p_1+p_2}),
\end{equation}
for $a_1,\cdots, a_{p_1+p_2} \in A$.

Let $f \circ_i g \in \HHcx^{p_1+p_2-1,r_1+r_2}(A)$ be the \emph{$i$-th composition} 
\begin{equation}\label{eq:i-thGerComposition}
f \circ_i g :=  f \circ (\id^{\otimes i-1} \otimes g \otimes \id^{\otimes p_1 -i}).
\end{equation}
The \emph{Gerstenhaber bracket} $\gerstenhaber{f}{g} \in \HHcx^{p_1+p_2-1, r_1+r_2}(A)$ of $f$ and $g$ is 
\begin{equation}\label{eq:GerBracket}
\begin{split}
\gerstenhaber f g &:= \sum_{i=1}^{p_1} (-1)^{ (p_1-1)r_2 + (i-1)(p_2-1)} \, f \circ_i g \\
&\qquad \qquad\qquad - (-1)^{(p_1+r_1-1)(p_2+r_2-1)} \sum_{j=1}^{p_2} (-1)^{(p_2-1)r_1 + (j-1)(p_1-1)} \,  g \circ_j f.
\end{split}
\end{equation}  
One can show that 
$$
\hochschild = \gerstenhaber{\mu_A}{\argument}, \qquad \partial_A = \gerstenhaber{d_A}{\argument},
$$
where the multiplication $\mu_A: a \otimes b \mapsto a \cdot b$ in $A$ is considered as a Hochschild cochain in $\HHcx^{2,0}(A)$.

The following proposition can be shown by a direct computation as in \cite{MR161898}. 

\begin{proposition}\label{prop:Hochfordga}
Let $(A,d_A)$ be a dg algebra.  
\begin{itemize}
\item[(i)]
The shifted Hochschild cochain complex $\big(\grHHcx^\bullet(A)[1], \hochschild + \partial_A, \gerstenhaber{\argument}{\argument} \big)$ together with the Gerstenhaber bracket $\gerstenhaber{\argument}{\argument}$ is a differential graded Lie algebra. 
\item[(ii)]
The Hochschild cochain complex $\big(   \grHHcx^\bullet(A), \hochschild + \partial_A , \cup  \big)$ together with the cup product $\cup$ is a differential graded algebra.
\end{itemize}
Furthermore, the Hochschild cohomology $\grHH^{\bullet}(A)$ is a Gerstenhaber algebra.
\end{proposition}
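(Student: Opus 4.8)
The plan is to derive all three assertions from a single Maurer--Cartan formalism built on the pre-Lie structure underlying the Gerstenhaber bracket, deferring the delicate sign bookkeeping to the tensor-coalgebra reduction of Appendix~\ref{appendix:HHcochains}.

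First I would prove (i). The basic building block is the (graded, right) \emph{pre-Lie product} $f \bar\circ g := \sum_{i=1}^{p_1} (-1)^{(p_1-1)r_2 + (i-1)(p_2-1)} f \circ_i g$ assembled from the $i$-th compositions of \eqref{eq:i-thGerComposition}; by construction $\gerstenhaber{f}{g} = f\bar\circ g - (-1)^{(p_1+r_1-1)(p_2+r_2-1)} g \bar\circ f$ reproduces \eqref{eq:GerBracket}. The key computation is that $\bar\circ$ satisfies the graded pre-Lie identity, i.e.\ the associator $(f\bar\circ g)\bar\circ h - f\bar\circ(g\bar\circ h)$ is graded-symmetric in its last two arguments; this is a bookkeeping of how insertions into disjoint versus nested slots recombine, carried out as in \cite{MR161898} but with Koszul signs. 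Granting the pre-Lie identity, graded antisymmetry of $\gerstenhaber{\argument}{\argument}$ is immediate and the graded Jacobi identity follows formally, so $\grHHcx^\bullet(A)[1]$ is a graded Lie algebra. I then observe that the degree-$1$ element $m := \mu_A + d_A$ satisfies $\hochschild + \partial_A = \gerstenhaber{m}{\argument}$. By graded Jacobi, $\gerstenhaber{m}{\argument}$ is a derivation of the bracket, and $(\hochschild + \partial_A)^2 = \gerstenhaber{m}{\gerstenhaber{m}{\argument}} = \tfrac{1}{2}\gerstenhaber{\gerstenhaber{m}{m}}{\argument}$; it therefore remains to verify the Maurer--Cartan equation $\gerstenhaber{m}{m}=0$. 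Expanding $\gerstenhaber{m}{m} = \gerstenhaber{\mu_A}{\mu_A} + 2\gerstenhaber{\mu_A}{d_A} + \gerstenhaber{d_A}{d_A}$, the three terms encode respectively associativity of $\mu_A$, the Leibniz rule for $d_A$, and $d_A^2=0$, each of which holds because $(A,d_A)$ is a dg algebra. This gives (i).

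For (ii), I would check directly from \eqref{eq:CupProd} that $\cup$ is associative---the sign prefactors multiply consistently under either bracketing---and then verify the graded Leibniz rule $(\hochschild + \partial_A)(f \cup g) = (\hochschild + \partial_A)(f)\cup g + (-1)^{\degree{f}} f \cup (\hochschild + \partial_A)(g)$, separating the $\hochschild$-part (whose internal cancellation upon concatenation is Gerstenhaber's classical one) from the $\partial_A$-part (where $d_A$ distributes over the product by the dg-algebra axioms). Together these establish the dg algebra structure of (ii).

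The ``furthermore'' is where the genuine work lies, and it is the step I expect to be the main obstacle: on the cochain level $\cup$ is neither graded-commutative nor is $\gerstenhaber{\argument}{\argument}$ a biderivation of it, and both properties hold only up to explicit homotopies. I would exhibit these homotopies through the brace operations, writing the commutator $f\cup g - (-1)^{\degree{f}\,\degree{g}} g\cup f$ and the Leibniz defect $\gerstenhaber{f}{g\cup h} - \gerstenhaber{f}{g}\cup h - (-1)^{\ldots}\, g\cup\gerstenhaber{f}{h}$ as $(\hochschild+\partial_A)$-coboundaries of suitable brace expressions. Passing to cohomology annihilates the coboundaries, so $\cup$ descends to a graded-commutative associative product and $\gerstenhaber{\argument}{\argument}$ to a biderivation of it, which is exactly the assertion that $\grHH^\bullet(A)$ is a Gerstenhaber algebra. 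The only delicate point throughout is sign consistency, and the most economical way to guarantee it is to transport the discussion through the degree-shifting maps $\suspend,\dsuspend$ into the tensor-coalgebra model of Appendix~\ref{appendix:HHcochains}, under which $\grHHcx^\bullet(A)[1]$ identifies with $\Coder(T(A[1]))$ and the Gerstenhaber bracket becomes the commutator of coderivations, rendering the Lie part of the argument essentially sign-free.
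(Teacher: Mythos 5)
Your proposal is correct and is essentially the paper's own route: the paper disposes of this proposition as ``a direct computation as in \cite{MR161898}'' together with the d\'ecalage/tensor-coalgebra sign bookkeeping of Appendix~\ref{appendix:HHcochains}, which is precisely what you assemble --- Gerstenhaber's pre-Lie identity for the insertions, the Maurer--Cartan element $\mu_A + d_A$ generating $\hochschild + \partial_A = \gerstenhaber{\mu_A + d_A}{\argument}$, and the brace-type homotopies giving commutativity of $\cup$ and the Poisson compatibility on cohomology. Your packaging of the differential via the Maurer--Cartan equation is a mild modernization of that computation, not a different argument, so there is nothing substantive to contrast.
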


\section{Hochschild complexes of differential graded bimodules}

Let $(A,d_A)$ and $(B,d_B)$ be dg algebras, and $(X,d_X)$ be a dg $A$-$B$-bimodule. 
In \cite{keller2003derived}, Keller constructed a dg category of two objects from $(X,d_X)$ and studied its Hochschild cohomology which can be computed by a product-total complex $\pdHHcx^\bullet(X)$. 
In this section, we consider the dg algebra $A\ltimes X \rtimes B$ and introduce the (sum) Hochschild complex $\grHHcx^\bullet(X)$ of $X$ as a subcomplex of the Hochschild complex $\grHHcx^\bullet(A\ltimes X \rtimes B)$. This is a sum analogue of the Keller's complex $\pdHHcx^\bullet(X)$.

Let $A\ltimes X \rtimes B$ be the dg algebra whose underlying graded vector space is the direct sum $A \oplus X \oplus B$, and whose multiplication and differential are defined, respectively, by the formulas
\begin{gather*}
(a_1+x_1+b_1) \cdot (a_2+x_2 +b_2):= a_1a_2 + (a_1x_2 + x_1 b_2) + b_1b_2, \\
d(a_1+x_1+b_1) := d_A(a_1) + d_X(x_1) + d_B(b_1),
\end{gather*} 
for $a_1,a_2 \in A$, $x_1,x_2 \in X$, $b_1,b_2 \in B$. 
It is straightforward to show that $A\ltimes X \rtimes B$ is a dg algebra.

We will use the notations 
\begin{gather*}
\HHcx^{p,q,r}(A,X,B):= \Hom^r(A^{\otimes p} \otimes X \otimes B^{\otimes q}, X), \\
\grHHcx^\bullet(A,X,B):= \bigoplus_{p+q+r +1 = \bullet} \HHcx^{p,q,r}(A,X,B), \\
\grHHcx^\bullet(X):= \grHHcx^\bullet(A) \oplus \grHHcx^\bullet(A,X,B) \oplus \grHHcx^\bullet(B).
\end{gather*}
The space $\grHHcx^\bullet(X)$ can be embedded into $\grHHcx^\bullet(A\ltimes X \rtimes B)$ as follows:
\begin{gather*}
\grHHcx^p(A) \into \grHHcx^p(A\ltimes X \rtimes B): f_A \mapsto i_A \circ f_A \circ \pr_A^{\otimes p}, \\
\grHHcx^q(B) \into \grHHcx^q(A\ltimes X \rtimes B): f_B \mapsto i_B \circ f_B \circ \pr_B^{\otimes q}, \\
\HHcx^{p,q,r}(A,X,B) \into \grHHcx^{p+q+r+1}(A\ltimes X \rtimes B): f_X \mapsto i_X \circ f_X \circ(\pr_A^{\otimes p} \otimes \pr_X \otimes \pr_B^{\otimes q}), 
\end{gather*}
where $i_A, i_B, i_X$ are the inclusions from $A, B, X$ into $A\ltimes X \rtimes B$, respectively, and $\pr_A,\pr_B,\pr_X$ are the projections from  $A\ltimes X \rtimes B$ onto $A, B, X$, respectively. We will omit $i$ and $\pr$ by abuse of notation.  
With this embedding, one can show that the subspace $\grHHcx^\bullet(X)$ is closed under the differential, Gerstenhaber bracket and cup product in $\grHHcx^\bullet(A\ltimes X \rtimes B)$.

\begin{proposition}\label{prop:HH(X)_dga/dgla_str}
Let $X$ be a dg $A$-$B$-bimodule. The subspace $\grHHcx^\bullet(X)$ is closed under the differential $\hochschild + \partial$, Gerstenhaber bracket $\gerstenhaber{\argument}{\argument}$ and cup product $\cup$ in $\grHHcx^\bullet(A\ltimes X \rtimes B)$.
\end{proposition}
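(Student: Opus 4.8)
The plan is to reduce the three closure statements to just two --- closure under the cup product $\cup$ and under the Gerstenhaber bracket $\gerstenhaber{\argument}{\argument}$ --- and then to prove these two by a bookkeeping argument on ``input/output patterns''. Write $C := A\ltimes X \rtimes B$. First I would observe that the multiplication $\mu_C$ and the differential $d_C$ of $C$, viewed as Hochschild cochains, already lie in $\grHHcx^\bullet(X)$: decomposing according to $C = A \oplus X \oplus B$, the cochain $\mu_C$ is the sum of $\mu_A \in \grHHcx^2(A)$, of $\mu_B \in \grHHcx^2(B)$, and of the left and right action maps, which are exactly the components of $\mu_C$ in $\HHcx^{1,0,0}(A,X,B)$ and $\HHcx^{0,1,0}(A,X,B)$; likewise $d_C = d_A + d_B + d_X$ with $d_X \in \HHcx^{0,0,1}(A,X,B)$. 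Since $\hochschild = \gerstenhaber{\mu_C}{\argument}$ and $\partial = \gerstenhaber{d_C}{\argument}$ (Section~\ref{sec:HHcxDGAlg}), closure under $\gerstenhaber{\argument}{\argument}$ together with $\mu_C, d_C \in \grHHcx^\bullet(X)$ immediately yields closure under $\hochschild + \partial$. So it remains to treat $\cup$ and $\gerstenhaber{\argument}{\argument}$.

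The second step is to reformulate membership in $\grHHcx^\bullet(X)$ combinatorially. Every cochain $C^{\otimes n} \to C$ splits, via $C = A\oplus X \oplus B$, into components indexed by an input word $w$ in the letters $A, X, B$ and an output letter $o$. One checks from the definitions that $\grHHcx^\bullet(X)$ is precisely the subspace of cochains whose only possibly nonzero components have $(w,o)$ of one of three \emph{admissible} shapes: $w$ consisting only of $A$'s with $o = A$; $w$ consisting only of $B$'s with $o = B$; or $w = A^{\otimes p}\otimes X \otimes B^{\otimes q}$ (a single $X$, preceded only by $A$'s and followed only by $B$'s) with $o = X$. These are exactly the composable strings of the two-object dg category that $C$ encodes through its orthogonal idempotents $1_A, 1_B$. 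The only structural input needed is that $\mu_C$ is nonzero solely on the composable pairs $A\otimes A \to A$, $A\otimes X \to X$, $X\otimes B \to X$, $B\otimes B \to B$; since the argument only tracks which components are nonzero, the signs in \eqref{eq:CupProd} and \eqref{eq:GerBracket} are irrelevant.

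For the cup product I would write an input word of $f \cup g$ as a concatenation $w = w_1 w_2$ of its first $p_1$ and last $p_2$ letters. By \eqref{eq:CupProd} the resulting component is, up to sign, obtained by applying $\mu_C$ to the outputs of $f$ on $w_1$ and of $g$ on $w_2$; it therefore vanishes unless $w_1$ is admissible for $f$ with some output $o_1$, $w_2$ is admissible for $g$ with output $o_2$, and $\mu_C(o_1, o_2) \neq 0$. Running through the four composable pairs $(o_1, o_2)$ shows that each forces $w_1 w_2$ into an admissible shape with output $\mu_C(o_1, o_2)$; for instance $(o_1, o_2) = (A, X)$ forces $w_1$ to be all $A$'s and $w_2$ of the form $A^{\otimes p}\otimes X \otimes B^{\otimes q}$, so that $w = w_1 w_2$ is again of the form $A^{\otimes \bullet}\otimes X \otimes B^{\otimes \bullet}$ with output $X$. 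Hence $f \cup g \in \grHHcx^\bullet(X)$.

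For the bracket it suffices, by \eqref{eq:GerBracket}, to show each $i$-th composition $f \circ_i g$ of \eqref{eq:i-thGerComposition} preserves $\grHHcx^\bullet(X)$. Here the output of $g$ is inserted into the $i$-th argument of $f$; because $g \in \grHHcx^\bullet(X)$, on any admissible input word $v$ its output lies in a single homogeneous summand, a letter $o_g$, so the relevant component of $f$ is $f_u$ with $u = w_1\, o_g\, w_2$ and the total input word is $w = w_1 v w_2$. Splitting on $o_g \in \{A, X, B\}$, I would check that admissibility of $v$ for $g$ and of $u$ for $f$ forces $w$ to be admissible: the inserted letter $o_g$ must occupy the matching slot of $f$'s word, which pins down $w_1$ and $w_2$ so that $w_1 v w_2$ still has at most one $X$, with only $A$'s before it and only $B$'s after. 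The same analysis applies to $g \circ_j f$, giving $\gerstenhaber{f}{g} \in \grHHcx^\bullet(X)$. I expect this last verification for $\circ_i$ to be the only delicate point: it is where one must confirm that substituting $g$'s homogeneous output into the correct argument of $f$ respects the linear order ``$A$'s, then one $X$, then $B$'s'' --- the combinatorial shadow of composability that makes $\grHHcx(A)$, $\grHHcx(A,X,B)$ and $\grHHcx(B)$ interlock into a single subcomplex.
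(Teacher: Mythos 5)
Your proposal is correct, and its core is the same case-by-case bookkeeping that the paper carries out after stating the proposition: the paper's (implicit) proof consists precisely of decomposing $\hochschild+\partial$ into the components $\hochschild^A,\hochschild^{AX},{\hochschild^X}_L,{\hochschild^X}_R,\hochschild^{XB},\hochschild^B,\partial_A,\partial_B,\partial_X$ and listing which compositions $f\circ_i g$ and cup products vanish, which is exactly what your ``admissible word'' analysis encodes (your observation that only the vanishing pattern matters, not the signs, is sound, since membership in the subspace is a condition on which components are zero).

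The one genuine organizational difference is your first step: you deduce closure under $\hochschild+\partial$ from closure under $\gerstenhaber{\argument}{\argument}$ together with the observation that $\mu_C$ and $d_C$ themselves lie in the embedded subspace $\grHHcx^\bullet(X)$ (their components being $\mu_A$, $\mu_B$, the two action maps in $\HHcx^{1,0,0}$ and $\HHcx^{0,1,0}$, and $d_A$, $d_B$, $d_X\in\HHcx^{0,0,1}$), using $\hochschild=\gerstenhaber{\mu_C}{\argument}$ and $\partial=\gerstenhaber{d_C}{\argument}$. The paper instead verifies the differential directly, writing out its restriction to the subspace in explicit formulas; those formulas are needed later anyway (e.g.\ for the double complex \eqref{doublecomplex} and the proof of Theorem~\ref{thm:d.s.KellerThm}), which is why the paper takes that route. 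Your reduction is cleaner as a pure closure argument and buys you one fewer case analysis; the paper's explicitness buys the formulas used downstream. Both are valid, and your treatment of the delicate point --- that inserting $g$'s output letter into $f$'s word must respect the pattern ``$A$'s, then one $X$, then $B$'s'' --- matches the paper's conditions $j>p$, $k\neq p+1$, $l<p+2$ exactly.
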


In the rest of this section, we describe the differential, Gerstenhaber bracket and cup product on $\grHHcx^{\bullet}(X)$.

Let $\hochschild + \partial$ be the restriction of the differential of $\grHHcx^\bullet(A\ltimes X \rtimes B)$ to $\grHHcx^\bullet(X)$. Here, $\hochschild$ denotes the Hochschild differential, and $\partial$ denotes the differential induced by the dg structure of $A$, $B$ and $X$.  The Hochschild differential $\hochschild$ can be decomposed as 
$$
\hochschild = \hochschild^A + \hochschild^{AX} + {\hochschild^X}_L + {\hochschild^X}_R + \hochschild^{XB} + \hochschild^B
$$ 
which acts on $\grHHcx^\bullet(X)$ as in the following diagram:
$$
\begin{tikzcd}
&  \grHHcx^\bullet(A,X,B) \ar[loop,distance=1cm,"\hochschild^X = {\hochschild^X}_L + {\hochschild^X}_R"']
 & \\ 
 \ar[loop left,distance=0.7cm,"\hochschild^A"] \grHHcx^\bullet(A) \ar[ru,"\hochschild^{AX}"] & & \ar[lu,"\hochschild^{XB}"'] \grHHcx^\bullet(B) \ar[loop right,distance=0.7cm,"\hochschild^B"]
\end{tikzcd}
$$
where $\hochschild^A$ and $ \hochschild^B$ are the Hochschild differentials of $A$ and $B$, respectively, and the other components $\hochschild^{AX}, {\hochschild^X}_L, {\hochschild^X}_R , \hochschild^{XB}$ are described as follows.

Let $f_A  \in \HHcx^{p,r}(A),   f_B \in \HHcx^{q,r}(B), f_X \in \HHcx^{p,q,r}(A,X,B)$, $a_i \in A$, $x \in X$, and $b_j \in B$. We have 
\begin{gather*}
\hochschild^{AX}(f_A) \in \HHcx^{p,0,r}(A,X,B), \qquad \hochschild^{XB}(f_B) \in \HHcx^{0,q,r}(A,X,B), \\
{\hochschild^X}_L(f_X) \in \HHcx^{p+1,q,r}(A,X,B), \qquad {\hochschild^X}_R(f_X) \in \HHcx^{p,q+1,r}(A,X,B) 
\end{gather*}
which are defined by 
\begin{align*}
\hochschild^{AX}(f_A)\big(a_1, \cdots, a_{p}; x\big) & := (-1)^{r}f_A(a_1,\cdots, a_{p})\cdot x, \\
\hochschild^{XB}(f_B)\big(x;b_1,\cdots, b_q\big) & := (-1)^{q+r-1+r|x|} x\cdot f_B(b_1,\cdots, b_q), \\
 {\hochschild^X}_L(f_X)\big(a_0,\cdots, a_p; x\, ; b_1,\cdots, b_q\big)  &:= (-1)^{p+q+r +r|a_0|} a_0 \cdot f_X(a_1,\cdots, a_p; x\,;b_1,\cdots, b_q) \\
& \qquad + \sum_{i=0}^{p-1} (-1)^{p+q+r+i+1} \ f_X(a_0, \cdots, a_ia_{i+1}, \cdots, a_p; x\,; b_1, \cdots, b_q) \\
& \qquad \qquad + (-1)^{q+r+1}\  f_X(a_0, \cdots,  a_{p-1}; a_p \cdot x\,; b_1, \cdots, b_q), \\
 {\hochschild^X}_R(f_X)\big(a_1,\cdots, a_p; x\,; b_0,\cdots, b_q\big) & := (-1)^{q+r-1} f_X(a_1,\cdots, a_p; x \cdot b_0 \,;b_1,\cdots, b_q) \\
& \qquad + \sum_{j=0}^{q-1} (-1)^{q+r+j} \ f_X(a_1,  \cdots, a_p; x\,; b_0,\cdots, b_j b_{j+1}, \cdots, b_q) \\
& \qquad \qquad + (-1)^{r}\ f_X(a_1, \cdots,   a_p;  x\,; b_0, \cdots, b_{q-1}) \cdot b_q.
\end{align*}

For the component $\partial$, we have 
$$
\partial = \partial_A + \partial_B + \partial_X,
$$
where $\partial_A$ and $\partial_B$ are the differentials defined by \eqref{eq:DiffFromDG} on $A$ and $B$, respectively, and the Hochschild cochain $\partial_X(f_X) \in \HHcx^{p,q,r+1}(A,X,B)$ is defined by
$$
\partial_X(f_X) := d_X \circ f_X  \\ 
- (-1)^r f_X \circ \Big( \sum_{i=1}^p \id^{\otimes i-1} \otimes d_A \otimes \id^{\otimes p+q-i+1} + \id^{\otimes p} \otimes d_X \otimes \id^{\otimes q} + \sum_{j=1}^q \id^{\otimes p+j} \otimes d_B \otimes \id^{\otimes q-j}\Big).
$$ 
The cochain complex $\big(\grHHcx^\bullet(X), \hochschild + \partial \big)$ will be referred as the \textbf{Hochschild cochain complex} of the dg $A$-$B$-bimodule $X$.

Let $\circ_i$ be the $i$-th composition in $\grHHcx^\bullet(A\ltimes X \rtimes B)$, which is defined as in \eqref{eq:i-thGerComposition}. 
We have 
$$
f_A \circ_i f_X = f_A \circ_i f_B = f_B \circ_i f_A = f_B \circ_i f_X =0, 
$$
for any $i$, and
$$
f_X \circ_j f_A =0, \quad f_X \circ_k f_X'  = 0, \quad f_X \circ_l f_B =0,
$$
if $j>p$, $k\neq p+1$, $l<p+2$. 
Here, $f_X'$ is a Hochschild cochain in $\grHHcx^\bullet(A,X,B) \subset \grHHcx^\bullet(X)$.

Furthermore, it can be shown by \eqref{eq:CupProd} that
$$
f_A \cup f_B =0, \quad f_X \cup f_A =0, \quad f_X \cup f_X' =0, \quad f_B \cup f_A =0, \quad f_B \cup f_X =0,
$$
and 
$$
f_A \cup f_X, \, f_X \cup f_B \in \grHHcx^\bullet(A,X,B).
$$

As a consequence, we have the following 

\begin{proposition}\label{prop:ProjInclPreserveStr}
Let $\pi_A: \grHHcx^\bullet(X) \onto \grHHcx^\bullet(A)$, $\pi_B: \grHHcx^\bullet(X) \onto \grHHcx^\bullet(B)$ be the natural projections, and let $\iota_A: \grHHcx^\bullet(A) \into \grHHcx^\bullet(X)$, $\iota_B: \grHHcx^\bullet(B) \into \grHHcx^\bullet(X)$ be the natural inclusions. 
\begin{itemize}
\item[(i)]
The projections $\pi_A$ and $\pi_B$ are cochain maps. 
\item[(ii)]
The projections $\pi_A, \pi_B$ and the inclusions $\iota_A, \iota_B$ respect the compositions $\circ_i$ and thus preserve the Gerstenhaber brackets. 
\item[(iii)]
The projections $\pi_A, \pi_B$ and the inclusions $\iota_A, \iota_B$ preserve the cup products.
\end{itemize}
\end{proposition}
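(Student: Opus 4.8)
The plan is to reduce everything to the explicit structural facts established immediately above: the decomposition of $\hochschild + \partial$ recorded in the diagram, the vanishing relations for the $i$-th compositions $\circ_i$, and the vanishing relations for the cup product. Since the differential is linear and both $\circ_i$ and $\cup$ are bilinear, it suffices to test each assertion on pure components drawn from the three summands of $\grHHcx^\bullet(X) = \grHHcx^\bullet(A) \oplus \grHHcx^\bullet(A,X,B) \oplus \grHHcx^\bullet(B)$ and then to record into which summand each surviving term lands.

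For (i), I would read off from the decomposition diagram that the only pieces of $\hochschild + \partial$ whose image meets $\grHHcx^\bullet(A)$ are $\hochschild^A$ and $\partial_A$, and that these are supported on the $\grHHcx^\bullet(A)$ summand; nothing carries $\grHHcx^\bullet(A,X,B)$ or $\grHHcx^\bullet(B)$ back into $\grHHcx^\bullet(A)$. Thus for $f = f_A + f_X + f_B$ we get $\pi_A((\hochschild+\partial)f) = (\hochschild^A + \partial_A)(f_A)$, which is precisely the differential of $\grHHcx^\bullet(A)$ applied to $\pi_A(f)$; the same argument handles $\pi_B$. (The inclusions are deliberately absent from (i): $\hochschild^{AX}$ sends $\grHHcx^\bullet(A)$ into $\grHHcx^\bullet(A,X,B)$, so $\iota_A$ fails to commute with the differential.)

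For (ii) I would handle projections and inclusions separately. Expanding $f \circ_i g$ into its nine pure summands and discarding the terms that vanish by $f_A \circ_i f_X = f_A \circ_i f_B = f_B \circ_i f_A = f_B \circ_i f_X = 0$ leaves $f_A \circ_i g_A$, valued in $\grHHcx^\bullet(A)$, together with terms of the shape $f_X \circ_\bullet g_\bullet$ and $f_B \circ_i g_B$, all valued in $\grHHcx^\bullet(A,X,B)$ or $\grHHcx^\bullet(B)$; applying $\pi_A$ retains only $f_A \circ_i g_A = \pi_A(f) \circ_i \pi_A(g)$, and symmetrically for $\pi_B$. For the inclusions I would use that $i_A$ is a section of $\pr_A$, i.e. $\pr_A \circ i_A = \id_A$, together with $A \cdot A \subseteq A$: composing two embedded $A$-cochains inside $\grHHcx^\bullet(A\ltimes X \rtimes B)$, the projections $\pr_A$ recover the inner output verbatim, whence $\iota_A(f_A) \circ_i \iota_A(g_A) = \iota_A(f_A \circ_i g_A)$. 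Because $\pi_A$ and $\iota_A$ leave the bidegree $(p,r)$ of each $A$-component unchanged, the signs in \eqref{eq:GerBracket} agree on both sides, so preservation of $\gerstenhaber{\argument}{\argument}$ follows from preservation of the $\circ_i$ by bilinearity.

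Part (iii) runs in exact parallel, now invoking $f_A \cup f_B = f_X \cup f_A = f_X \cup f_X' = f_B \cup f_A = f_B \cup f_X = 0$ and $f_A \cup f_X, f_X \cup f_B \in \grHHcx^\bullet(A,X,B)$: projecting $f \cup g$ onto the $A$-summand annihilates every cross term and leaves $f_A \cup g_A$, while for the inclusions the identity $\pr_A(a_1 a_2) = a_1 a_2$ for $a_1, a_2 \in A$ shows that the ambient cup product of embedded $A$-cochains reproduces the intrinsic one. I do not expect a genuine obstacle here: the projection statements are forced by the vanishing relations, and the only points demanding care are the section compatibility $\pr_A \circ i_A = \id_A$ (with its $B$-analogue) underlying the inclusions, and the verification that bidegrees—hence the signs in \eqref{eq:GerBracket} and \eqref{eq:CupProd}—are transported unchanged.
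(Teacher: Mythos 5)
Your proposal is correct and follows essentially the same route as the paper: the paper presents the decomposition of the differential, the vanishing relations for $\circ_i$, and the vanishing relations for $\cup$, and then states the proposition as an immediate consequence, which is exactly the component-wise bookkeeping you carry out (including the observation that $\pr_A \circ i_A = \id_A$ and bidegree preservation make the embedded operations match the intrinsic ones, and that $\hochschild^{AX}$ is what prevents $\iota_A$, $\iota_B$ from being cochain maps).
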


Note that the inclusions $\iota_A$ and $\iota_B$ are \emph{not} cochain maps.

\section{Keller admissible triples}

Let $A$ and $B$ be dg algebras, and $X$ be a dg $A$-$B$-bimodule. Let $\rho_A$ and $\rho_B$ be the maps   
\begin{align*}
\rho_A: A \to  \Hom_{\op B}(X,X)  , & \quad \rho_A(a)(x):= a \cdot x, \\
\rho_B: \op B \to  \Hom_A(X,X) , & \quad  \rho_B(b)(x):= (-1)^{|x||b|} x \cdot b.
\end{align*}
The spaces $\Hom_{\op B}(X,X)$ and $\Hom_A(X,X)$ are equipped with the differential $\partial_X = [d_X,\argument]$, where $d_X$ is the given differential on $X$, and $[\argument,\argument]$ is the bracket induced by graded commutators. Note that the maps $\rho_A$ and $\rho_B$ are morphisms of dg algebras.

Since \emph{sum} Hochschild cohomologies are not well-behaved under quasi-isomorphisms, we will require the action maps $\rho_A$ and $\rho_B$ to satisfy a technical condition, called \emph{weak cone-nilpotency} in this paper, so that they induce isomorphisms between Hochschild cohomologies. In order to define weak cone-nilpotency, we first introduce a condition that a sum Hochschild cohomology vanishes.

\subsection{A vanishing condition of Hochschild cohomology}\label{subsection:condition}

Unlike the \emph{product} Hochschild cohomology $\pdHH(A,M)$ of $A$ with values in $M$, which vanishes for an acyclic dg $A$-$A$-bimodule $M$, the \emph{sum} Hochschild cohomology $\grHH(A,M)$ does not necessarily vanish for an acyclic dg $A$-$A$-bimodule $M$. In the following, we describe a condition on an acyclic dg $A$-$A$-bimodule $M$ that the sum Hochschild cohomology $\grHH(A,M)$ vanishes.

Let $(A,d_{A})$ be a dg algebra and $(M,d_{M})$ be an \emph{acyclic} dg $A$-$A$-bimodule (i.e. $H^{\bullet}(M,d_{M})=0$).
Observe that, since the underlying space of $M$ is an acyclic dg vector space over $\KK$, there is a homotopy operator $h:M\to M$ of degree $-1$ such that 
\begin{equation*}
d_{M}\circ h + h \circ d_{M}=\id_{M}.
\end{equation*}
We will refer such a homotopy operator $h:M \to M$ as a \emph{contracting homotopy} for $(M, d_{M})$.

For each fixed $n$, we have the induced map
\[H :=h_{\ast}:\Hom^\bullet (A^{\otimes n}, M)\to \Hom^{\bullet-1} (A^{\otimes n}, M)\] 
of degree $-1$, defined by $H(f)=h\circ f$.

\begin{lemma}\label{lem:AAA}
For each $n$, the induced operator $H$ satisfies the homotopy equation 
$$
\partial\circ H + H\circ \partial = \id_{\Hom(A^{\otimes n}, M)}
$$
on $\big( \Hom^{\bullet}(A^{\otimes n}, M), \partial \big)$, where $\partial$ is the differential induced by the dg structures. 
\end{lemma}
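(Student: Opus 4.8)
The plan is to exploit the fact that the contracting homotopy $h$ acts only on the target $M$, so that $H = h_\ast$ is a \emph{post-composition} operator, whereas the differential $\partial$ of \eqref{eq:DiffFromDG} is the sum of a post-composition term and a pre-composition term. Concretely, for a cochain $f$ of degree $r$ I would write $\partial = \partial_1 + \partial_2$, where $\partial_1(f) := d_M \circ f$ and $\partial_2(f)(a_1,\dots,a_n) := -(-1)^r \sum_{i=1}^n (-1)^{|a_1|+\cdots+|a_{i-1}|} f(a_1,\dots,d_A a_i,\dots,a_n)$. The essential point is that the sign in front of $\partial_2$ depends on the degree of the cochain to which it is applied, and that $H$ lowers this degree by one.

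First I would compute the $\partial_1$-contributions to $\partial \circ H + H \circ \partial$. Since $\partial_1 H(f) = d_M \circ h \circ f$ and $H \partial_1(f) = h \circ d_M \circ f$, their sum equals $(d_M h + h d_M) \circ f = \id_M \circ f = f$ by the defining equation of the contracting homotopy. This already produces the right-hand side of the homotopy equation, so the remaining task is to show that the $\partial_2$-contributions cancel.

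Next I would examine the $\partial_2$-contributions. Because $H$ only post-composes with $h$ and does not touch the arguments $a_1,\dots,a_n$, both $\partial_2 H(f)$ and $H \partial_2(f)$ produce exactly the same family of terms $h \circ f(a_1,\dots,d_A a_i,\dots,a_n)$ with the same internal Koszul signs $(-1)^{|a_1|+\cdots+|a_{i-1}|}$. They differ only in the overall sign coming from the degree-dependent prefactor of $\partial_2$: since $H(f)$ has degree $r-1$, the term $\partial_2 H(f)$ carries the sign $-(-1)^{r-1} = (-1)^r$, whereas $H \partial_2(f)$ inherits the sign $-(-1)^r$ from $\partial_2 f$. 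These are opposite, so the two contributions cancel term by term, and adding everything yields $(\partial \circ H + H \circ \partial)(f) = f$, as desired.

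The only real obstacle is the sign bookkeeping in the $\partial_2$-cancellation; structurally the statement is transparent, since $H$ commutes with the pre-composition operators inserting $d_A$, while the overall sign of $\partial_2$ flips under the degree shift $r \mapsto r-1$ induced by $H$. No subtlety enters from the $A$-bimodule structure of $M$, since $\partial$ as defined in \eqref{eq:DiffFromDG} uses only the differentials $d_M$ and $d_A$, not the actions; this is precisely what makes the argument work slot-by-slot for every fixed $n$.
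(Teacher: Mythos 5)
Your proposal is correct and follows essentially the same argument as the paper's proof: the paper likewise writes out $\partial\circ H(f)$ and $H\circ\partial(f)$ explicitly, observes that the $d_M$-terms assemble to $(d_M\circ h + h\circ d_M)\circ f = f$, and that the $d_A$-insertion terms cancel precisely because the prefactor $-(-1)^{r-1}=(-1)^r$ on $\partial\circ H(f)$ is opposite to the prefactor $-(-1)^r$ on $H\circ\partial(f)$. Your splitting of $\partial$ into post-composition and pre-composition parts is just a slightly more structured packaging of the identical computation.
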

\begin{proof}
Let $f\in \Hom^r (A^{\otimes n}, M)$. 
\begin{align*}
\partial \circ H(f)(a_1\otimes \cdots \otimes a_n) &= d_M \circ h \circ f (a_1\otimes \cdots \otimes a_n) \\
&\quad -(-1)^{r-1} \sum_{i=1}^{n} (-1)^{\degree{a_1}+\cdots + \degree{a_{i-1}}} h\circ f( \cdots \otimes a_{i-1}\otimes d_A(a_i) \otimes a_{i+1}\otimes \cdots )\\
H \circ \partial(f)(a_1\otimes \cdots \otimes a_n) &= h\circ d_M \circ f(a_1\otimes \cdots \otimes a_n)\\
&\quad -(-1)^{r} \sum_{i=1}^{n} (-1)^{\degree{a_1}+\cdots + \degree{a_{i-1}}} h\circ f( \cdots \otimes a_{i-1}\otimes d_A(a_i) \otimes a_{i+1}\otimes \cdots )
\end{align*}
Thus we have $(\partial \circ H+ H \circ \partial)(f)=(d_M\circ h + h\circ d_M)\circ f=f$.
\end{proof}

As a result, we have an operator $H: \grHHcx^{\bullet}(A,M)\to \grHHcx^{\bullet-1}(A,M)$ of degree $-1$ on the Hochschild cochain complex $\big(\grHHcx^\bullet(A,M), \hochschild + \partial \big)$. However, it is \emph{not} a contracting homotopy. Indeed, we have
\begin{align*}
H\circ (\partial + \hochschild) + (\partial + \hochschild)\circ H - \id_{\grHHcx(A,M)} &=H\circ \hochschild + \hochschild \circ H
\end{align*}
which does \emph{not} vanish in general. 

For each $n$, we define a sequence of maps
\begin{equation}\label{eq:frakhk}
 \frakh_{k}:\Hom^{\bullet}(A^{\otimes n}, M) \to \Hom^{\bullet-k-1} (A^{\otimes n+k}, M), \qquad k\geq 0, 
\end{equation}
by 
\[\frakh_{k} :=(H \circ \hochschild)^{k}\circ H = H\circ(\hochschild \circ H)^{k}.\]
In particular, $\frakh_{0}=H=h_{\ast}$. Furthermore, we have 
\[\frakh :=\sum_{k=0}^{\infty}(-1)^{k}\frakh_{k}:\Hom^{\bullet}(A^{\otimes n}, M) \to \prod_{k=0}^{\infty}\Hom^{\bullet-k-1} (A^{\otimes n+k}, M),\] 
which defines a map $\frakh:\grHHcx^{\bullet}(A,M) \to \pdHHcx^{\bullet-1}(A,M)$ of degree $-1$. 
Note that this map $\frakh$ depends on a choice of contracting homotopy $h$ for $M$.

\begin{lemma}
The map $\frakh:\grHHcx^{\bullet}(A,M) \to \pdHHcx^{\bullet-1}(A,M)$ satisfies the equation 
$$
\frakh \circ (\partial + \hochschild)(f) + (\partial + \hochschild)\circ \frakh(f) = f 
$$
for $f \in \grHHcx^{\bullet}(A,M)$.
\end{lemma}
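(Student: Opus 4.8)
The plan is to show that $\frakh$ is a genuine contracting homotopy for the total differential $\partial + \hochschild$ by organizing everything according to the bigrading on $\HHcx^{p,r}(A,M) = \Hom^r(A^{\otimes p}, M)$, in which $\partial$ has bidegree $(0,+1)$, $\hochschild$ has bidegree $(+1,0)$, the operator $H$ has bidegree $(0,-1)$, and hence $\frakh_{k}$ has bidegree $(+k,-k-1)$. First I would record the three bicomplex relations $\partial^{2}=0$, $\hochschild^{2}=0$, and $\partial\hochschild + \hochschild\partial = 0$; these follow from the fact that $\hochschild + \partial$ squares to zero together with the separation of bidegrees. To these I would adjoin the homotopy relation $\partial\circ H + H\circ\partial = \id$ of Lemma~\ref{lem:AAA}, which is valid as an operator identity on $\bigoplus_{n}\Hom(A^{\otimes n}, M)$ since both $H$ and $\partial$ preserve the tensor length $n$.

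The heart of the argument is the operator recursion
$$
\partial\circ\frakh_{m} + \frakh_{m}\circ\partial \;=\; \hochschild\circ\frakh_{m-1} + \frakh_{m-1}\circ\hochschild, \qquad m\geq 1,
$$
whose base case $m=0$ is exactly Lemma~\ref{lem:AAA} in the form $\partial\circ\frakh_{0} + \frakh_{0}\circ\partial = \id$. To establish it, I would write $\frakh_{m} = H\circ(\hochschild\circ H)^{m}$ and commute a single $\partial$ rightward through this alternating word: each passage past an $H$ is governed by $\partial H = \id - H\partial$, each passage past an $\hochschild$ by $\partial\hochschild = -\hochschild\partial$, and the spurious term $\hochschild(\hochschild H)^{m-1}$ produced at the left end is annihilated by $\hochschild^{2}=0$. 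Collecting the surviving terms and re-recognizing the subwords $H(\hochschild H)^{m-1} = \frakh_{m-1}$ and $H(\hochschild H)^{m} = \frakh_{m}$, together with $\hochschild\circ\frakh_{m-1} = (\hochschild H)^{m}$, delivers the displayed identity.

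With the recursion in hand, the conclusion follows by a telescoping assembly carried out one bidegree at a time. Fixing a homogeneous $f$ of bidegree $(p,r)$, the component of $\frakh\circ(\partial+\hochschild)(f) + (\partial+\hochschild)\circ\frakh(f)$ in bidegree $(p+m,r-m)$ receives contributions only from the four terms $\frakh_{m}\partial f$ and $\partial\frakh_{m}f$ (each carrying the sign $(-1)^{m}$) and $\frakh_{m-1}\hochschild f$ and $\hochschild\frakh_{m-1}f$ (each carrying the sign $(-1)^{m-1}$). For $m=0$ this component is $(\partial H + H\partial)f = f$ by Lemma~\ref{lem:AAA}, whereas for $m\geq 1$ the recursion identifies the two groups so that $(-1)^{m}$ and $(-1)^{m-1}$ cancel them against each other, making the component vanish. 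Summing over $m$ reproduces $f$ exactly, which is the asserted identity.

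The one point I would treat with genuine care — and the only real obstacle — is legitimizing the manipulation of the infinite sum $\frakh = \sum_{k}(-1)^{k}\frakh_{k}$, whose image lies in the \emph{product} complex $\pdHHcx$. The key observation making this rigorous is precisely the bidegree bookkeeping above: a fixed target bidegree $(p+m,r-m)$ is reached by at most the two indices $k=m$ and $k=m-1$, so $\partial$ and $\hochschild$ may be applied term by term and each graded component of the homotopy equation is a \emph{finite} computation, with no convergence issue in the product. Granting this, the bidegree-wise cancellation is purely formal and the identity holds.
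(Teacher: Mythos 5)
Your proposal is correct and follows essentially the same route as the paper: both reduce the statement to the recursion $\partial\circ\frakh_{k} + \frakh_{k}\circ\partial = \hochschild\circ\frakh_{k-1} + \frakh_{k-1}\circ\hochschild$ and derive it from the same three identities $\partial H + H\partial = \id$, $\partial\hochschild + \hochschild\partial = 0$, and $\hochschild^{2} = 0$. Your letter-by-letter commutation of $\partial$ through the word $H(\hochschild H)^{k}$ is just an unrolled form of the paper's induction on $k$, and your bidegree/telescoping bookkeeping (including the observation that each component is a finite sum) makes explicit what the paper compresses into ``it suffices to show.''
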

\begin{proof}
It suffices to show that $\partial  \frakh_{k} + \frakh_{k} \partial=\hochschild  \frakh_{k-1} + \frakh_{k-1}  \hochschild$. 
Recall that we have
\begin{gather*}
\partial H + H \partial = \id\\
\partial  \hochschild + \hochschild  \partial = 0\\
\hochschild^2 = 0.
\end{gather*}
We prove the assertion by induction on $k$. For $k=1$, we consider $\frakh_0=H$ and $\frakh_1=H\hochschild H$. Since
\[\partial (H\hochschild H)= (\id - H\partial) \hochschild H = \hochschild H + H \hochschild \partial H = \hochschild H + H\hochschild (\id-H\partial),\] 
we have 
$$
\partial  \frakh_{1} + \frakh_{1} \partial=\hochschild  \frakh_0 + \frakh_0 \hochschild.
$$

Suppose that the equation $ \partial \frakh_{n} + \frakh_{n}  \partial=\hochschild  \frakh_{n-1} + \frakh_{n-1} \hochschild  $ holds for $n= k$. Then we have 
\begin{align*}
\partial \frakh_{k+1} &= \partial H \hochschild \frakh_k = (\id- H\partial) \hochschild \frakh_k =\hochschild \frakh_k + H\hochschild \partial \frakh_k \\
&= \hochschild \frakh_k + H\hochschild (\hochschild \frakh_{k-1}+\frakh_{k-1}\hochschild - \frakh_k \partial) =\hochschild \frakh_k +\frakh_{k}\hochschild - \frakh_{k+1} \partial .
\end{align*}
This proves the lemma.
\end{proof}

As a result, the map $\frakh:\grHHcx^{\bullet}(A,M) \to \pdHHcx^{\bullet-1}(A,M)$ defines a contracting homotopy for the sum Hochschild complex $\big(\grHHcx^\bullet(A,M), \hochschild +\partial \big)$ if and only if for each $f\in \grHHcx^{\bullet}(A,M)$, there exists an integer $N=N(f)$ such that $\frakh_{k}(f)=0$ if $k>N$. 

\begin{definition}
We say an acyclic dg $A$-$A$-bimodule $M$ is \textbf{pointwisely nilpotent} if there exists a contracting operator $h$ for $(M,d_{M})$ such that the induced sequence $\{ \frakh_{k}  \}_{k=0}^\infty$ of maps defined in \eqref{eq:frakhk} satisfies the following: for each $f\in \grHHcx^{\bullet}(A,M)$, there exists an integer $N=N(f)$ such that $\frakh_{k}(f)=0$ for all $k>N$. 
\end{definition}

\begin{proposition}\label{prop:PtwiseNilpotent}
Let $A$ be a dg algebra, and $M$ be an acyclic dg $A$-$A$-bimodule. Then $\grHH^\bullet (A,M)=0$ if $M$ is pointwisely nilpotent.
\end{proposition}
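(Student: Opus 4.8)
The plan is to upgrade the degree $-1$ operator $\frakh$ constructed above into a genuine contracting homotopy for the \emph{sum} Hochschild complex $\big(\grHHcx^\bullet(A,M), \hochschild + \partial\big)$; once this is done, the vanishing $\grHH^\bullet(A,M)=0$ is automatic.

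First I would note that the only reason $\frakh = \sum_{k\geq 0}(-1)^k \frakh_k$ was defined with target the product-total complex $\pdHHcx^{\bullet-1}(A,M)$, rather than the sum-total complex, is that a priori the series can have infinitely many nonzero terms on a fixed input. Pointwise nilpotency is precisely the hypothesis that removes this defect: for each $f \in \grHHcx^\bullet(A,M)$ there exists $N=N(f)$ with $\frakh_k(f)=0$ for all $k>N$, so that $\frakh(f) = \sum_{k=0}^{N(f)}(-1)^k \frakh_k(f)$ is a finite sum and therefore lies in the direct sum $\grHHcx^{\bullet-1}(A,M)$. Hence $\frakh$ restricts to a well-defined operator $\frakh: \grHHcx^\bullet(A,M) \to \grHHcx^{\bullet-1}(A,M)$ of degree $-1$.

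Next I would invoke the preceding lemma, which gives the homotopy identity $\frakh\circ(\partial + \hochschild)(f) + (\partial + \hochschild)\circ\frakh(f) = f$ for every $f\in\grHHcx^\bullet(A,M)$. Because $\frakh(f)$ now lies in the sum complex, the operator $\partial + \hochschild$ is applied entirely within $\grHHcx^\bullet(A,M)$, where it agrees with its counterpart on $\pdHHcx^\bullet(A,M)$; thus the identity holds verbatim as an identity of operators on $\grHHcx^\bullet(A,M)$. This exhibits $\frakh$ as a contracting homotopy for $\big(\grHHcx^\bullet(A,M), \hochschild + \partial\big)$, and so its cohomology $\grHH^\bullet(A,M)$ vanishes.

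The argument is largely bookkeeping given the lemmas already in place. The single point requiring care—and the only real content—is the transition from the product complex to the sum complex: one must confirm both that $\frakh(f)$ is a finite sum (hence lands in $\grHHcx$) and that the homotopy identity, originally derived for $\frakh$ as a map into $\pdHHcx$, survives the restriction. Both are immediate consequences of pointwise nilpotency, which guarantees the finiteness of $\frakh(f)$ for each cochain $f$.
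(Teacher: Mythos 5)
Your proof is correct and follows essentially the same route as the paper: the paper's two lemmas (the homotopy identity for $\frakh$ and the observation that $\frakh$ lands in the sum complex exactly when the $\frakh_k(f)$ vanish for large $k$) are set up precisely so that pointwise nilpotency immediately makes $\frakh$ a contracting homotopy for $\big(\grHHcx^\bullet(A,M),\hochschild+\partial\big)$, which is the argument you give. The care you take in checking that the homotopy identity survives the restriction to the sum complex is exactly the (implicit) content of the paper's deduction.
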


If the dg algebra $A$ is finite dimensional, the pointwise nilpotency of an acyclic dg $A$-$A$-bimodule $M$ is obtained by an ascending filtration of $M$ compatible with a contracting homotopy for $M$.
\begin{lemma}\label{lem:Mk}
Let $A$ be a finite dimensional dg algebra. An acyclic dg $A$-$A$-bimodule $M$ is pointwisely nilpotent if there exists an ascending filtration 
\[\{0\}=M_{0}\subset M_{1}\subset \cdots \subset M\]
of $A$-$A$-subbimodules of $M$ and a contracting homotopy $h$ for $M$ such that $\bigcup_{q\geq 0}M_{q}=M$ and 
\[h(M_{q+1})\subset M_{q}\]
for all $q\geq 0$.
\end{lemma}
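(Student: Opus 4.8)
The plan is to show that the family $\{\frakh_k\}_{k\ge 0}$ built from the \emph{given} contracting homotopy $h$ terminates on every Hochschild cochain, which is exactly the assertion of pointwise nilpotency for this choice of $h$. Since each $\frakh_k$ is linear and every element of $\grHHcx^\bullet(A,M)$ is a finite sum of homogeneous components $f\in\Hom^r(A^{\otimes p},M)$, it suffices to produce, for each such homogeneous $f$, an integer $N(f)$ with $\frakh_k(f)=0$ for $k>N(f)$; for a general cochain one then takes the maximum of the $N$'s over its finitely many nonzero components. Note that $\frakh_k$ sends $\Hom^r(A^{\otimes p},M)$ into $\Hom^{r-k-1}(A^{\otimes p+k},M)$, so distinct components do not mix.

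First I would exploit finite dimensionality: as $A$ is finite dimensional, so is $A^{\otimes p}$, and hence the image $f(A^{\otimes p})\subseteq M$ is a finite-dimensional subspace. Because $\{M_q\}$ is an ascending filtration with $\bigcup_{q\ge 0}M_q=M$, any finite-dimensional subspace lies in a single step; choosing a basis of $\im f$ and taking the largest of the indices it occupies, I obtain $q_0$ with $\im f\subseteq M_{q_0}$.

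The core of the argument is to track the filtration level of the image through the two operators assembling $\frakh_k=(H\hochschild)^kH$. I would verify two facts. First, the Hochschild differential preserves the level: if $\im g\subseteq M_s$ then $\im\hochschild(g)\subseteq M_s$. This is precisely where the hypothesis that each $M_q$ is an $A$-$A$-\emph{subbimodule} enters, since the outer boundary terms $a_0\cdot g(\cdots)$ and $g(\cdots)\cdot a_p$ must remain in $M_s$, while the inner-multiplication terms stay in $\im g\subseteq M_s$; the signs are irrelevant to this containment. Second, $H=h_\ast$ strictly lowers the level: if $\im g\subseteq M_s$ then $\im H(g)=h(\im g)\subseteq h(M_s)\subseteq M_{s-1}$, by the assumption $h(M_{q+1})\subseteq M_q$ together with $M_0=\{0\}$ (so that $g=0$ already when $s=0$).

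Combining these, since $\frakh_k=(H\hochschild)^kH$ involves $k+1$ applications of $H$ interspersed with the level-preserving $\hochschild$, starting from $\im f\subseteq M_{q_0}$ I obtain $\im\frakh_k(f)\subseteq M_{q_0-(k+1)}$. Hence once $k+1\ge q_0$, i.e.\ $k\ge q_0-1$, this image lies in $M_0=\{0\}$ and $\frakh_k(f)=0$. Taking $N(f)=q_0$ therefore yields $\frakh_k(f)=0$ for all $k>N(f)$, which is exactly pointwise nilpotency with respect to the homotopy $h$. The only delicate point is the level bookkeeping through the alternating composition together with confirming that it is genuinely the subbimodule structure that makes $\hochschild$ level-preserving; the remaining verifications are routine.
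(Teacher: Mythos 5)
Your proof is correct and takes essentially the same route as the paper's: finite dimensionality of $A$ places $f(A^{\otimes p})$ in a single filtration step $M_{q_0}$, and then the filtration level is driven down to $M_0=\{0\}$ by combining the level-lowering property of $H$ (from $h(M_{q+1})\subset M_q$) with the level-preservation of $\hochschild$ (from the subbimodule hypothesis). The paper merely packages your two facts into the single estimate $(\hochschild \circ H)(f)(A^{\otimes p+1})\subset A\cdot h(M_{q})+h(M_{q})\cdot A \subset M_{q-1}$ and iterates it, which is the same bookkeeping with a different grouping of the operators.
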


Note that each $M_{i}$ is an $A$-$A$-subbimodule of $M$, but not necessarily a \textit{dg} $A$-$A$-subbimodule of $M$.

\begin{proof}
Let $f\in \Hom^{r}(A^{\otimes p},M)$. Observe that if $f(A^{\otimes p})\subset M_{q}$, then $(\hochschild \circ H)(f) \in \Hom^{r-1}(A^{\otimes p+1},M)$ and 
\begin{equation}\label{eq:lem:Mk}
(\hochschild \circ H)(f)(A^{\otimes p+1})\subset A\cdot h(M_{q})+h(M_{q})\cdot A \subset M_{q-1}.
\end{equation}
Since $A$ is finite dimensional, there is $k\geq 0$ such that $f(A^{\otimes p})\subset M_{k}$. 
By applying \eqref{eq:lem:Mk} repeatedly, we have 
\[\frakh_{k}(f)(A^{\otimes p+k})=H\circ (\hochschild \circ H)^{k}(f)(A^{\otimes p+k}) \subset h(M_{0}) =\{0\}\]
which proves the lemma.
\end{proof}

\subsection{Weak cone-nilpotency and Keller admissible triples}\label{sec:WCN}

We introduce a technical condition on quasi-isomorphisms, called weakly cone-nilpotency, to define Keller admissible triples. 

\begin{definition}
Let $A$ be a dg algebra, and let $M$ and $N$ be dg $A$-$A$-bimodules. 
A quasi-isomorphism $\phi: M \to N$  of $A$-$A$-bimodules is said to be \textbf{weakly cone-nilpotent} if one of the following conditions is satisfied: 
\begin{itemize}
\item[(i)]
The mapping cone of $\phi$ is pointwisely nilpotent.
\item[(ii)]
The map $\phi$ has a right inverse whose mapping cone is pointwisely nilpotent.
\item[(iii)]
The map $\phi$ has a left inverse whose mapping cone is pointwisely nilpotent.
\end{itemize}
\end{definition}

\begin{remark}
The above conditions are not equivalent to each other. In fact, the weakly cone-nilpotent quasi-isomorphisms $\rho_A$ and $\rho_B$ in Section~\ref{sec:ActionMap} do not have right inverses. The map $\rho_A$ satisfies (i), and the map $\rho_B$ satisfies (iii). None of them satisfies (ii).
%
\end{remark}

Let $\phi:\cC^{\bullet} \to \cD^{\bullet}$ be a map of cochain complexes $(\cC^{\bullet},d_{\cC})$ and $(\cD^{\bullet},d_{\cD})$. In the present paper, the mapping cone of $\phi$ is a cochain complex $\Cone^{\bullet}(\phi)=(\cC^{\bullet+1}\oplus \cD^{\bullet},d_{\Cone(\phi)})$ where the differential is defined by
\[d_{\Cone(\phi)}(c,d)=\big(-d_{\cC}(c),\phi(c)+d_{\cD}(d)\big)\]
for $c\in \cC$ and $d \in \cD$. Note that if $(\cC,d_{\cC})$ and $(\cD,d_{\cD})$ are dg $A$-$A$-bimodules, then $\Cone(\phi)$ also carries the dg $A$-$A$-bimodule structures.

\begin{lemma}\label{lem:SumHochIso}
Let $\phi : M \to N$ be a quasi-isomorphism of $A$-$A$-bimodules. If $\phi$ is weakly cone-nilpotent, then the induced map
\[\phi_{\ast}:\grHHcx(A,M)\to \grHHcx(A,N)\]
is a quasi-isomorphism.
\end{lemma}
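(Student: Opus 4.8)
The plan is to reduce the three cases of weak cone-nilpotency to a single \emph{core} statement and then settle the two remaining cases by a formal composition argument. The core statement is: if $\alpha : M \to N$ is a morphism of dg $A$-$A$-bimodules whose mapping cone $\cone{\alpha}$ is pointwisely nilpotent, then $\alpha_\ast : \grHHcx(A,M) \to \grHHcx(A,N)$ is a quasi-isomorphism. Condition~(i) is exactly this situation applied to $\phi$, so the core yields the lemma under hypothesis~(i).

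To prove the core, I would first note that $M \mapsto \grHHcx(A,M)$ is an additive dg functor from dg $A$-$A$-bimodules to cochain complexes: each $\Hom^\bullet(A^{\otimes p},-)$ preserves direct sums, and the total differential $\hochschild + \partial$ acts diagonally on a direct sum of bimodules, the bimodule structure on a cone being the diagonal one while only the cone differential couples $M$ and $N$ through $\alpha$. Such a functor carries mapping cones to mapping cones; tracking the degree shift $M \mapsto M[1]$ and its signs produces a natural isomorphism of cochain complexes
\[
\grHHcx^\bullet(A, \cone{\alpha}) \;\cong\; \cone{\alpha_\ast},
\]
where $\cone{\alpha_\ast}$ is the mapping cone of $\alpha_\ast$. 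Since $\cone{\alpha}$ is pointwisely nilpotent it is in particular acyclic, so Proposition~\ref{prop:PtwiseNilpotent} gives $\grHH^\bullet(A,\cone{\alpha}) = 0$. Under the identification above this reads $H^\bullet(\cone{\alpha_\ast}) = 0$, and the long exact sequence attached to a mapping cone forces $\alpha_\ast$ to be a quasi-isomorphism, proving the core.

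For hypothesis~(ii), let $\psi : N \to M$ be the right inverse of $\phi$, so that $\phi \circ \psi = \id_N$ and $\cone{\psi}$ is pointwisely nilpotent. As pointwise nilpotency entails acyclicity of $\cone{\psi}$, the core applies to $\psi$ and shows that $\psi_\ast$ is a quasi-isomorphism. By functoriality $\phi_\ast \circ \psi_\ast = (\phi \circ \psi)_\ast = \id$, and since $\psi_\ast$ is invertible in cohomology, so is $\phi_\ast$. Hypothesis~(iii) is symmetric: a left inverse $\psi$ with $\psi \circ \phi = \id_M$ yields a quasi-isomorphism $\psi_\ast$ satisfying $\psi_\ast \circ \phi_\ast = \id$, whence $\phi_\ast$ is a quasi-isomorphism.

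The main obstacle is the sign bookkeeping underlying the isomorphism $\grHHcx^\bullet(A,\cone{\alpha}) \cong \cone{\alpha_\ast}$. Concretely, under the degree shift $\Hom^r(A^{\otimes p}, M[1]) = \Hom^{r+1}(A^{\otimes p}, M)$ one must verify that the restriction of $\hochschild + \partial$ to the $M[1]$-summand becomes $-(\hochschild + \partial_M)$, matching the sign in the mapping-cone differential of the paper, and that the off-diagonal contribution of $d_{\cone{\alpha}}$ reproduces $\alpha_\ast$ exactly. Once these signs are pinned down, the rest is formal homological algebra.
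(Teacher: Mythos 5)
Your proposal is correct and follows essentially the same route as the paper: the key identification $\Cone(\phi_\ast)\cong\grHHcx(A,\Cone(\phi))$ combined with Proposition~\ref{prop:PtwiseNilpotent} handles the cone-nilpotent case, and the right/left inverse cases are settled by the same formal argument that an inverse with pointwisely nilpotent cone pushes forward to a one-sided inverse quasi-isomorphism, forcing $\phi_\ast$ to be a quasi-isomorphism. The only difference is that you spell out the sign bookkeeping behind the cone identification, which the paper takes as given.
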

\begin{proof}
If the mapping cone $\Cone(\phi)$ of $\phi$ is pointwisely nilpotent, then the lemma follows from Proposition~\ref{prop:PtwiseNilpotent} and the fact that 
\[\Cone(\phi_{\ast})\cong \grHHcx(A,\Cone(\phi))\]
where $\Cone(\phi_{\ast})$ is the mapping cone of the pushforward map $\phi_{\ast}:\grHHcx(A,M)\to \grHHcx(A,N)$. 

Let $\tau$ be a right/left inverse of $\phi$. By the same reason, if the mapping cone of $\tau$ is pointwisely nilpotent, then the pushforward map $\tau_\ast:\grHHcx(A,N)\to \grHHcx(A,M)$ is a quasi-isomorphism. Since $\tau_\ast$ is a right/left inverse of $\phi_\ast$, the map $\phi_\ast$ is also a quasi-isomorphism. 
\end{proof}

\begin{definition}
A triple $(A,X,B)$ is called a \textbf{Keller admissible triple} if (i) the action maps 
\begin{gather*}
\rho_A: (A,d_A) \to  \big(\Hom_{\op B}(X,X), \partial_X\big) , \\
\rho_B: (\op B,d_B) \to  \big(\Hom_A(X,X), \partial_X \big)
\end{gather*}
are weakly cone-nilpotent quasi-isomorphisms, and (ii) the sequences 
\begin{gather}
0 \to \Hom_{\op B}(X,X) \into \Hom(X,X) \xto{{\hochschild^X}_R} \Hom(X \otimes B, X) \xto{{\hochschild^X}_R} \Hom(X \otimes B^{\otimes 2}, X) \xto{{\hochschild^X}_R} \cdots  \label{eq:KellerSeqA} \\
0 \to \Hom_{A}(X,X) \into \Hom(X,X) \xto{{\hochschild^X}_L} \Hom(A \otimes X, X) \xto{{\hochschild^X}_L} \Hom(A^{\otimes 2} \otimes X, X) \xto{{\hochschild^X}_L} \cdots \label{eq:KellerSeqB}
\end{gather}
are exact.
\end{definition}

The main theorem of this paper is the following

\begin{theorem}\label{thm:d.s.KellerThm}
Let $(A,X,B)$ be a Keller admissible triple. The two projections 
$$
\begin{tikzcd}
 & \ar[ld,two heads,"\pi_A"'] \grHHcx^\bullet(X) 
  \ar[rd,two heads,"\pi_B"] & \\
 \grHHcx^\bullet(A) & & \grHHcx^\bullet(B)
\end{tikzcd}
$$
induce isomorphisms of Gerstenhaber algebras on cohomologies. 
\end{theorem}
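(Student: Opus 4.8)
The plan is to deduce the theorem from Proposition~\ref{prop:ProjInclPreserveStr} together with a cohomological comparison. By Proposition~\ref{prop:ProjInclPreserveStr}, the projections $\pi_A$ and $\pi_B$ are cochain maps that respect the Gerstenhaber bracket and the cup product; hence, once we know that they are quasi-isomorphisms, the induced maps on cohomology are automatically isomorphisms of Gerstenhaber algebras. So the whole problem reduces to proving that $\pi_A$ and $\pi_B$ are quasi-isomorphisms.

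First I would record that, with respect to the decomposition $\grHHcx^\bullet(X) = \grHHcx^\bullet(A) \oplus \grHHcx^\bullet(A,X,B) \oplus \grHHcx^\bullet(B)$, the differential $\hochschild + \partial$ is ``triangular'': no component of it maps $\grHHcx^\bullet(A,X,B)$ or $\grHHcx^\bullet(B)$ back into $\grHHcx^\bullet(A)$, and none connects $\grHHcx^\bullet(A)$ with $\grHHcx^\bullet(B)$ directly. Consequently $\grHHcx^\bullet(A,X,B)$ (with differential ${\hochschild^X}_L + {\hochschild^X}_R + \partial_X$, which I abbreviate $D_X$) is a subcomplex, $\ker \pi_B$ is the mapping cone of $\hochschild^{AX}\colon \grHHcx^\bullet(A) \to (\grHHcx^\bullet(A,X,B), D_X)$, and $\ker \pi_A$ is the mapping cone of $\hochschild^{XB}\colon \grHHcx^\bullet(B) \to (\grHHcx^\bullet(A,X,B), D_X)$. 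Via the long exact sequences attached to $0 \to \ker \pi_A \to \grHHcx^\bullet(X) \to \grHHcx^\bullet(A) \to 0$ and its $B$-analogue, it therefore suffices to prove that $\hochschild^{AX}$ and $\hochschild^{XB}$ are quasi-isomorphisms onto $(\grHHcx^\bullet(A,X,B), D_X)$.

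The core step is to compute the cohomology of $(\grHHcx^\bullet(A,X,B), D_X)$, which I would do by first contracting the $B$-direction. For fixed tensor-power $p$ of $A$ and fixed internal degree $r$, the complex $(\Hom(A^{\otimes p}\otimes X \otimes B^{\otimes q}, X), {\hochschild^X}_R)_{q\ge 0}$ is obtained from the Keller sequence~\eqref{eq:KellerSeqA} by applying the exact functor $\Hom(A^{\otimes p}, -)$ (with $A^{\otimes p}$ a spectator); hence it is a resolution of $\Hom_{\op B}^r(A^{\otimes p}\otimes X, X) \cong \Hom^r(A^{\otimes p}, \Hom_{\op B}(X,X))$ concentrated in $q$-degree $0$. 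A contracting homotopy $\sigma$ for~\eqref{eq:KellerSeqA} thus assembles into an operator on $\grHHcx^\bullet(A,X,B)$ that strictly lowers $q$; since elements of the direct sum have finite support and ${\hochschild^X}_L + \partial_X$ leaves $q$ unchanged, the homological perturbation series $\sum_n (\sigma \circ ({\hochschild^X}_L+\partial_X))^n$ terminates on every element. The perturbation lemma then yields a deformation retract of $(\grHHcx^\bullet(A,X,B), D_X)$ onto the ``$q=0$, right $B$-linear'' cochains, on which the residual differential is exactly the Hochschild differential $\hochschild + \partial$ of $A$ with values in the dg $A$-$A$-bimodule $\Hom_{\op B}(X,X)$, the $A$-bimodule structure being the one induced by $\rho_A$. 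In other words, the inclusion $\grHHcx^\bullet(A, \Hom_{\op B}(X,X)) \hookrightarrow (\grHHcx^\bullet(A,X,B), D_X)$ is a quasi-isomorphism.

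Finally I would invoke the action maps. Since $\rho_A\colon A \to \Hom_{\op B}(X,X)$ is a weakly cone-nilpotent quasi-isomorphism of dg $A$-$A$-bimodules, Lemma~\ref{lem:SumHochIso} shows that $(\rho_A)_\ast\colon \grHHcx^\bullet(A,A) \to \grHHcx^\bullet(A, \Hom_{\op B}(X,X))$ is a quasi-isomorphism. Comparing formulas, $\hochschild^{AX}$ is, up to sign, the composite of $(\rho_A)_\ast$ with the inclusion from the previous step, so $\hochschild^{AX}$ is a quasi-isomorphism; the same argument with~\eqref{eq:KellerSeqB} and $\rho_B\colon \op B \to \Hom_A(X,X)$ (now contracting the $A$-direction, and using the standard identification of the Hochschild complexes of $B$ and $\op B$) shows that $\hochschild^{XB}$ is a quasi-isomorphism. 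This makes $\ker \pi_A$ and $\ker \pi_B$ acyclic, whence $\pi_A$ and $\pi_B$ are quasi-isomorphisms, and the proof concludes. I expect the main obstacle to be the core step: one must ensure that the $B$-direction contraction coming from~\eqref{eq:KellerSeqA} is compatible with the \emph{sum}-total (rather than product-total) complex. The finite support of elements together with the fact that the assembled homotopy strictly lowers $q$ is precisely what guarantees convergence of the perturbation—the phenomenon that fails for a naive homotopy and that motivates the exactness hypotheses and the weak cone-nilpotency condition.
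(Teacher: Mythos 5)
Your proposal is correct, and its overall architecture is the same as the paper's: reduce to showing $\pi_A,\pi_B$ are quasi-isomorphisms via Proposition~\ref{prop:ProjInclPreserveStr}; identify $\ker\pi_B$ with the mapping cone of $\hochschild^{AX}$ (and $\ker\pi_A$ with that of $\hochschild^{XB}$), which is the content of Lemma~\ref{lem:ProjQiso}; factor $\hochschild^{AX}=\Phi\circ{\rho_A}_\ast$; handle ${\rho_A}_\ast$ by weak cone-nilpotency through Lemma~\ref{lem:SumHochIso}; and reduce everything to your ``core step,'' namely that the inclusion of $\grHHcx^\bullet\big(A,\Hom_{\op B}(X,X)\big)$ into $\big(\grHHcx^\bullet(A,X,B),\hochschild^X+\partial_X\big)$ is a quasi-isomorphism, which is precisely the paper's Lemma~\ref{prop:EmbedIsQisoKAT}. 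The single point of divergence is how that core step is proved. The paper assembles the same data into the double complex \eqref{doublecomplex} --- rows exact because the Keller sequences \eqref{eq:KellerSeqA}, \eqref{eq:KellerSeqB} remain exact after applying the exact functor $\Hom(A^{\otimes p},\argument)$, second quadrant vanishing --- and then quotes Weibel's Acyclic Assembly Lemma (Lemma~\ref{lem:AAL}) to conclude that the mapping cone of $\Phi$, which is the sum-total complex, is acyclic. You instead contract the $q$-direction first and run the homological perturbation lemma, with convergence guaranteed because the assembled homotopy strictly lowers $q$ while elements of the direct sum have bounded $q$-support. The two arguments are interchangeable here: the finite-support/second-quadrant condition is exactly what makes either work for the \emph{sum}-total (rather than product-total) complex, which you correctly flag as the crux. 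Your version is a constructive variant that additionally exhibits the retraction and identifies the residual differential with $\hochschild+\partial$ on $\grHHcx^\bullet\big(A,\Hom_{\op B}(X,X)\big)$ --- a fact that in the paper is implicit in \eqref{doublecomplex} being a double complex --- while the paper's route is shorter because the assembly lemma is quoted off the shelf.
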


\begin{remark}
A similar theorem for \textit{product} Hochschild cohomology appears in \cite{keller2003derived}. However, there is \textit{no} obvious relations between the two versions. See Example~\ref{ex:KellerAdmissibleSumHoch} below.
\end{remark}

The rest of the section is devoted to proving Theorem~\ref{thm:d.s.KellerThm}.

\subsection{Hochschild complexes with values in $\Hom_{\op B}(X,X)$ and $\Hom_{A}(X,X)$}

Since $\Hom_{\op B}(X,X)$ is equipped with the $A$-$A$-bimodule structure 
$$
(a\cdot f)(x):= a \cdot f(x), \qquad (f\cdot a)(x):= f(a \cdot x),
$$ 
we have the Hochschild complex $\grHHcx^\bullet\big(A,\Hom_{\op B}(X,X) \big)$ of $A$ with values in $\Hom_{\op B}(X,X)$. Similarly, with the $B$-$B$-bimodule structure 
$$
(b\cdot f)(x):= (-1)^{|b|(|f|+|x|)}f(x \cdot b), \qquad (f \cdot b)(x):= (-1)^{|b||x|}f(x)\cdot b,
$$
on $\Hom_{A}(X,X)$, we have the Hochschild complex $\grHHcx^\bullet\big(B,\Hom_{A}(X,X) \big)$ with values in $\Hom_{A}(X,X)$.

Suppose $X$ is a dg $A$-$B$-bimodule such that  the sequence \eqref{eq:KellerSeqA} is exact. For each $p,r$, since the functor $\Hom_\KK (A^{\otimes p},-)$ is exact, we have an exact sequence of vector spaces:
\begin{equation}\label{eq:KellerSeqA1}
\begin{aligned}
0\rightarrow \Hom^{r} \big(A^{\otimes p}, & \Hom_{\op B}(X,X)\big) \xhookrightarrow{} \Hom^{r} \big(A^{\otimes p} , \Hom(X , X )\big)\to \\
& \to \Hom^{r} \big(A^{\otimes p} , \Hom(X \otimes B, X )\big) \to \Hom^{r} \big(A^{\otimes p} , \Hom(X \otimes B^{\otimes 2}, X )\big)\to \cdots
\end{aligned}
\end{equation}
Under the isomorphism
\begin{align*}
\Hom^{r}(A^{\otimes p} \otimes X \otimes B^{\otimes q}, X) \cong \Hom^{r}\big(A^{\otimes p} , \Hom(X \otimes B^{\otimes q}, X )\big),
\end{align*}
the sequence \eqref{eq:KellerSeqA1} can be rephrased as the exact sequence
\begin{equation*}
0\to \Hom^r\big(A^{\otimes p}, \Hom_{\op B}(X,X)\big) \xhookrightarrow{\Phi^{p,r}} \HHcx^{p,0,r}(A,X,B) \xto{{\hochschild^X}_R} \HHcx^{p,1,r}(A,X,B) \xto{{\hochschild^X}_R} \cdots
\end{equation*}
for each $p,r$.

We define $\Phi=\sum_{p,r}(-1)^{r}\ \Phi^{p,r}$. Then we have a double complex 
\begin{equation}\label{doublecomplex}
\begin{tikzcd}
&\vdots&\vdots&\vdots&\\
0 \arrow[r]&\mathcal{C}^{-1,n+1} \arrow[r,"\Phi"]\arrow[u] & \mathcal{C}^{0,n+1} \arrow[r,"{\hochschild^X}_R"]\arrow[u]&\mathcal{C}^{1,n+1} \arrow[r, "{\hochschild^X}_R"]\arrow[u]&\cdots\\
0 \arrow[r] & \mathcal{C}^{-1,n} \arrow[r, " \Phi"] \arrow[u, "\hochschild + \partial"] & \mathcal{C}^{0,n} \arrow[r, "{\hochschild^X}_R"]\arrow[u, "{\hochschild^{X}}_{L} + \partial_{X}"']&\mathcal{C}^{1,n} \arrow[r, "{\hochschild^X}_R"]\arrow[u, "{\hochschild^{X}}_{L} + \partial_{X}"']&\cdots\\
&\vdots\arrow[u]&\vdots\arrow[u]&\vdots\arrow[u]&
\end{tikzcd}
\end{equation}
where 
\begin{gather*}
\mathcal{C}^{-1,n}=\bigoplus_{p+r=n-1} \Hom^r\big(A^{\otimes p}, \Hom_{\op B}(X,X)\big),\\
\mathcal{C}^{q,n}= \bigoplus_{p+r+1=n}\HHcx^{p,q,r}(A,X,B), \quad q\geq 0.
\end{gather*}
Here, the complex $\big( \mathcal{C}^{-1,\bullet+1}, \ \hochschild + \partial \big)$ is the Hochschild cochain complex of $A$ with values in $\Hom_{\op B}(X,X)$, and the total complex $\big( \bigoplus_{\substack{q \geq 0, \\ q+n= \bullet}}\mathcal{C}^{q,n}, \ {\hochschild^{X}}_{L} + \partial_{X} +{\hochschild^{X}}_{R}\big)$ is a subcomplex of $\big( \grHHcx^{\bullet}(X), \ \hochschild^{X} + \partial_{X}\big)$.

Similarly, one has the map $\Psi: \grHHcx^{\bullet} (B, \Hom_{A}(X,X)) \to \grHHcx^{\bullet+1}(A,X,B),$
$$
\Psi(f)(x \, ;b_1, \cdots, b_q):= (-1)^{q+r-1}(-1)^{|x|(|b_1|+\cdots+|b_q|)}f(b_1 \otimes \cdots \otimes b_q)(x),
$$
for $f \in \Hom^r(B^{\otimes q}, \Hom_{A}(X,X))$ and has a double complex analogous to \eqref{doublecomplex}.

We need the following lemma in Homological algebra \cite[Lemma 2.7.3]{MR1269324}:
\begin{lemma}[Acyclic Assembly Lemma] \label{lem:AAL}
Let $\mathcal{B}^{p,q}$ be a double complex, equipped with differentials 
\begin{gather*}
d_{1}:\mathcal{B}^{p,q}\to \mathcal{B}^{p+1,q}\\
d_{2}:\mathcal{B}^{p,q} \to \mathcal{B}^{p,q+1}.
\end{gather*}
 Suppose that the 2nd quadrant of $\mathcal{B}$ vanishes (i.e. $\mathcal{B}^{p,q}=0$ if $p<0$ and $q>0$). If every row of $\mathcal{B}$ is exact, then the direct sum total complex $\tot^{\oplus}(\mathcal{B})$ is acyclic.
\end{lemma}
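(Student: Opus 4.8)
The plan is to reduce the statement to the assertion that every cocycle in the direct-sum total complex $\tot^{\oplus}(\mathcal{B})$ is a coboundary, and to produce the required primitive by an explicit ``staircase'' descent. Write the total differential as $D = d_1 + (-1)^p d_2$ on $\mathcal{B}^{p,q}$, so that $D^2 = 0$, and recall that, because we work with the \emph{direct sum} total complex, every element $z \in \tot^{\oplus}(\mathcal{B})^n$ is a finite sum $z = \sum_{p+q=n} z^{p,q}$ with only finitely many nonzero homogeneous components $z^{p,q}\in\mathcal{B}^{p,q}$. Combined with the second-quadrant vanishing $\mathcal{B}^{p,q}=0$ for $p<0,\ q>0$, this guarantees that, in a fixed total degree, the column indices occurring in $z$ are bounded below. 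The proof will be a descending induction on the largest column index present in a given cocycle.

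Given $Dz = 0$ with $z \neq 0$, first I would set $P := \max\{\, p : z^{p,n-p}\neq 0 \,\}$, which exists by finiteness of the support. Inspecting the $\mathcal{B}^{P+1,\,n-P}$-component of the identity $Dz = 0$, the $d_2$-contribution comes from $z^{P+1,\,n-P-1}$, which vanishes by maximality of $P$; hence $d_1 z^{P,\,n-P} = 0$, i.e.\ the top component is a $d_1$-cocycle in the row $q = n-P$. Since that row is exact, there is $u \in \mathcal{B}^{P-1,\,n-P}$ with $d_1 u = z^{P,\,n-P}$. Regarding $u$ as an element of $\tot^{\oplus}(\mathcal{B})^{\,n-1}$ and replacing $z$ by $z - Du$, the $\mathcal{B}^{P,\,n-P}$-component is cancelled while the only newly affected component sits in $\mathcal{B}^{P-1,\,n-P+1}$, so the largest column index of the modified cocycle is strictly smaller than $P$. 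Iterating, the maximal column index strictly decreases at each step; since it is bounded below, after finitely many steps the cocycle is reduced to $0$, and summing the primitives $u$ exhibits the original $z$ as a coboundary.

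The delicate point, and the only place the hypotheses are truly used, is the \textbf{termination} of this descent: it is precisely the finiteness of support built into $\tot^{\oplus}$ (together with the second-quadrant vanishing, which also forces the corner to vanish once one reaches the edge column, exactness there being against a zero space) that makes the induction stop. For the product total complex $\tot^{\Pi}$, or for a full-plane double complex, a cocycle may have support unbounded in the column direction, the ``largest column index'' need not exist, and the analogous statement genuinely fails; this is where the stated half-plane condition is indispensable. Conceptually, the same computation is the statement that the spectral sequence of the row filtration of $\tot^{\oplus}(\mathcal{B})$ has $E_1 = 0$ by row-exactness and converges because the filtration is finite in each total degree; the staircase argument is just this convergence made explicit, and I would present it directly so as to keep the proof self-contained.
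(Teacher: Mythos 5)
Your proof is correct, and the descent is run in the right direction: you peel off the \emph{rightmost} component of a finitely supported cocycle using row exactness, so that the correction $d_2u$ pushes the support leftwards, and the process terminates because on the diagonal $p+q=n$ the second-quadrant vanishing bounds the admissible column indices below by $\min(0,n)$; the strictly-decreasing, bounded-below induction on the maximal column index is airtight as stated. The comparison with the paper is, however, of an unusual kind: the paper does not prove this lemma at all --- it is invoked by citation to Weibel's \emph{An Introduction to Homological Algebra}, Lemma~2.7.3 --- so yours is the only argument on the table, and it is essentially the standard staircase proof of the direct-sum half of the Acyclic Assembly Lemma. It is worth recording that your argument proves strictly more than the literal citation: Weibel states the $\tot^{\oplus}$ cases for half-plane complexes in homological indexing (upper half-plane with exact rows, or right half-plane with exact columns), whereas the lemma here assumes only the vanishing of one quadrant; moreover, translating the paper's cohomological complex into Weibel's homological conventions turns the situation actually used later in the paper (columns bounded below, increasing differentials, exact rows) into a \emph{left} half-plane complex with exact rows, which is the arrow-reversed twin of Weibel's stated case rather than the statement itself. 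So a self-contained proof such as yours is not redundant; it establishes exactly the version the paper needs, in one stroke. Two cosmetic points, neither of which is a gap: your sign convention $D=d_1+(-1)^p d_2$ presupposes commuting differentials (harmless, since the descent never uses the signs); and your parenthetical explanation of termination at the edge column (``exactness against a zero space'') describes only the case of total degree $n>0$, while for $n\le 0$ the last step terminates instead because the correction $d_2u$ lands in the vanishing quadrant $\mathcal{B}^{P-1,\,n-P+1}$ --- but your bounded-below induction already makes it unnecessary to single out either mechanism.
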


Applying Lemma~\ref{lem:AAL} to \eqref{doublecomplex}, we get

\begin{lemma}\label{prop:EmbedIsQisoKAT}
Let $A$ and $B$ be dg algebras, and $X$ be a dg $A$-$B$-bimodule. If the sequence \eqref{eq:KellerSeqA} is exact, then the map
\[\Phi: \big( \grHHcx^{\bullet}(A, \Hom_{\op B}(X,X) )[-1], \  -(\hochschild+\partial) \big) \xto{}
 \big( \grHHcx^{\bullet}(A,X,B), \ \hochschild^{X}+ \partial_{X}\big),\]
is a quasi-isomorphism.

Similarly, if the sequence \eqref{eq:KellerSeqB} is exact, then the map
\[\Psi:\big( \grHHcx^{\bullet}(B, \Hom_{A}(X,X) )[-1], \  -(\hochschild+\partial) \big) \xto{}
 \big( \grHHcx^{\bullet}(A,X,B), \ \hochschild^{X}+ \partial_{X}\big),\]
is a quasi-isomorphism.
\end{lemma}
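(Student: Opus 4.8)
The plan is to prove that $\Phi$ is a quasi-isomorphism by showing that its mapping cone $\cone{\Phi}$ is acyclic, and to obtain the latter from the Acyclic Assembly Lemma (Lemma~\ref{lem:AAL}) applied to the double complex \eqref{doublecomplex}. The first step is to identify $\cone{\Phi}$ with the direct-sum total complex of \eqref{doublecomplex}. Since the source of $\Phi$ is $\grHHcx^{\bullet}(A,\Hom_{\op B}(X,X))[-1]$ carrying the differential $-(\hochschild + \partial)$, the cone recipe $d_{\Cone(\phi)}(c,d) = \big(-d_{\cC}(c),\, \phi(c)+d_{\cD}(d)\big)$ restores $+(\hochschild + \partial)$ as the internal differential of the column $q=-1$ and inserts $\Phi$ as the horizontal arrow out of that column. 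Thus the underlying space of $\cone{\Phi}$ is $\grHHcx^{\bullet}(A,\Hom_{\op B}(X,X)) \oplus \grHHcx^{\bullet}(A,X,B)$ and its differential is precisely the total differential of \eqref{doublecomplex}, whose horizontal part is $\Phi$ together with ${\hochschild^X}_R$ and whose vertical part is $\hochschild + \partial$ on the column $q=-1$ and ${\hochschild^X}_L + \partial_X$ on the columns $q \geq 0$. In short, $\cone{\Phi}$ equals $\tot^{\oplus}$ of \eqref{doublecomplex}; the shift $[-1]$ and the sign $-(\hochschild + \partial)$ in the statement are exactly the bookkeeping that makes this an equality of complexes rather than merely an isomorphism.

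With this identification at hand, I would check the two hypotheses of Lemma~\ref{lem:AAL}. Each row of \eqref{doublecomplex} (fixed total row index $n$) splits, over the pairs $(p,r)$ contributing to that row, into the sequences displayed just before \eqref{doublecomplex}; these are obtained from the assumed exactness of \eqref{eq:KellerSeqA} by applying the exact functor $\Hom_\KK (A^{\otimes p},-)$, as recorded in \eqref{eq:KellerSeqA1}, so every row is exact. For the support hypothesis, reindexing the columns by sending $q=-1$ to position $0$ presents \eqref{doublecomplex} as a complex concentrated in the right half-plane, whence its second quadrant vanishes. Lemma~\ref{lem:AAL} then gives that $\tot^{\oplus}$ of \eqref{doublecomplex} is acyclic, so $\cone{\Phi}$ is acyclic and $\Phi$ is a quasi-isomorphism. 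The only delicate point in the whole argument is the sign-and-shift reconciliation of the first paragraph: one must verify termwise that the cone differential agrees with the total differential of the double complex (in particular that $\Phi$ and ${\hochschild^X}_R$ compose to zero and that $\Phi$ anticommutes with the vertical differentials), everything else being immediate from the data assembled before the statement.

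The claim for $\Psi$ follows by the identical argument applied to the double complex analogous to \eqref{doublecomplex} attached to $\Psi$, whose rows are the exact sequences produced by applying $\Hom_\KK (B^{\otimes q},-)$ to \eqref{eq:KellerSeqB}. The normalizing signs $(-1)^{q+r-1}(-1)^{|x|(|b_1|+\cdots+|b_q|)}$ built into the definition of $\Psi$ play exactly the role that the sign in $\Phi = \sum_{p,r}(-1)^r\,\Phi^{p,r}$ plays above, namely they make $\cone{\Psi}$ coincide with the corresponding total complex, and Lemma~\ref{lem:AAL} again yields acyclicity and hence that $\Psi$ is a quasi-isomorphism.
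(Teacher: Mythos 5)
Your proposal is correct and follows essentially the same route as the paper: identify $\cone{\Phi}$ with the direct-sum total complex of the double complex \eqref{doublecomplex}, verify that its rows are exact (via the exactness of \eqref{eq:KellerSeqA} and the exact functor $\Hom_\KK(A^{\otimes p},-)$) and that the relevant quadrant vanishes, and then apply the Acyclic Assembly Lemma to conclude the cone is acyclic. Your additional care with the shift $[-1]$ and the sign $-(\hochschild+\partial)$ is exactly the bookkeeping the paper leaves implicit in the word ``Observe.''
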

\begin{proof}
We show the assertion for the first part, and the second part follows from a similar argument. 

It is clear that the map $\Phi$ is a cochain map. Observe that the mapping cone $\cone{\Phi}$ of $\Phi$ is the direct sum total complex $\tot^{\oplus}(\mathcal{C})$ of \eqref{doublecomplex}. By Lemma~\ref{lem:AAL}, the total complex $\tot^{\oplus}(\mathcal{C})=\cone{\Phi}$ is acyclic, and therefore, we conclude that $\Phi$ is a quasi-isomorphism.
\end{proof}

\subsection{Proof of Theorem~\ref{thm:d.s.KellerThm}}

Let $(A,X,B)$ be a Keller admissible triple. Observe that the action map  $\rho_A: (A,d_A) \to \big( \Hom_{\op B}(X,X), \partial_X \big)$ is a dg $A$-$A$-bimodule map, and $\rho_B: (\op B,d_B) \to \big( \Hom_{A}(X,X), \partial_X \big)$ is a dg $B$-$B$-bimodule map. Thus, by Lemma~\ref{lem:SumHochIso}, the induced maps
\begin{align*}
{\rho_A}_\ast: \Big( \grHHcx^\bullet(A), \ -(\hochschild^A + \partial_A) \Big) & \to  \Big( \grHHcx^\bullet\big(A, \Hom_{\op B}(X,X)\big), \ -(\hochschild + \partial) \Big),  \\
{\rho_B}_\ast: \Big( \grHHcx^\bullet(B), \ -(\hochschild^B + \partial_B) \Big) & \to  \Big( \grHHcx^\bullet\big(B, \Hom_{A}(X,X)\big), \ -(\hochschild + \partial) \Big)
\end{align*}
are quasi-isomorphisms.

To prove Theorem~\ref{thm:d.s.KellerThm}, we need the following

\begin{lemma}\label{lem:ProjQiso}
If the map 
$$
\Phi \circ {\rho_A}_\ast: \big(\grHHcx^\bullet(A)[-1], \ -(\hochschild^A + \partial_A) \ \big) \to \big( \grHHcx^{\bullet}(A,X,B), \ \hochschild^X + \partial_X \ \big)
$$ 
is a quasi-isomorphism of cochain complexes, then so is the projection $\pi_B: \grHHcx^\bullet(X) \onto \grHHcx^\bullet(B)$. 
Similarly, if 
$$\Psi \circ {\rho_B}_\ast:\big(\grHHcx^\bullet(B)[-1], -(\hochschild^B + \partial_B)  \big) \to \big( \grHHcx^{\bullet}(A,X,B),  \hochschild^X + \partial_X  \big)$$ 
is a quasi-isomorphism, then so is the projection $\pi_A: \grHHcx^\bullet(X) \onto \grHHcx^\bullet(A)$. 
\end{lemma}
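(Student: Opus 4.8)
The plan is to exhibit each projection as the quotient map in a short exact sequence of complexes whose kernel is the mapping cone of the hypothesized quasi-isomorphism; acyclicity of the cone then forces the projection to be a quasi-isomorphism. I would carry out the argument for $\pi_B$ under the first hypothesis, the case of $\pi_A$ being entirely parallel.

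First I would record the short exact sequence of cochain complexes
$$
0 \to K \to \grHHcx^\bullet(X) \xto{\pi_B} \grHHcx^\bullet(B) \to 0, \qquad K := \ker \pi_B = \grHHcx^\bullet(A) \oplus \grHHcx^\bullet(A,X,B).
$$
That $K$ is a subcomplex follows from the decomposition of $\hochschild + \partial$ described earlier: among the components $\hochschild^A, \hochschild^{AX}, {\hochschild^X}_L, {\hochschild^X}_R, \hochschild^{XB}, \hochschild^B, \partial_A, \partial_B, \partial_X$, none carries $\grHHcx^\bullet(A)$ or $\grHHcx^\bullet(A,X,B)$ into $\grHHcx^\bullet(B)$, so the differential preserves $K$; moreover $\pi_B$ is a cochain map by Proposition~\ref{prop:ProjInclPreserveStr}(i). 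By the long exact cohomology sequence of this short exact sequence, it then suffices to prove that $K$ is acyclic.

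The key step is to identify $K$ with a mapping cone. Reading off the surviving components of the differential, $K$ carries $\hochschild^A + \partial_A$ on the summand $\grHHcx^\bullet(A)$, carries ${\hochschild^X}_L + {\hochschild^X}_R + \partial_X$ on $\grHHcx^\bullet(A,X,B)$, and the only component connecting the two summands is $\hochschild^{AX} : \grHHcx^\bullet(A) \to \grHHcx^\bullet(A,X,B)$. Matching this against the cone convention $d_{\Cone(\phi)}(c,d) = (-d_{\cC} c, \phi(c) + d_{\cD} d)$, with the shift $\cC = \grHHcx^\bullet(A)[-1]$ accounting for the $\cC^{\bullet+1}$ appearing in the cone and the sign $d_{\cC} = -(\hochschild^A + \partial_A)$ matching the differential on $K$, I obtain an isomorphism of complexes $K \cong \cone{\hochschild^{AX}}$. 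A short uncurrying computation then identifies the connecting map with the hypothesized one: both $\hochschild^{AX}$ and $\Phi \circ {\rho_A}_\ast$ send $f_A$ to the cochain $(a_1, \cdots, a_p; x) \mapsto (-1)^r f_A(a_1, \cdots, a_p) \cdot x$, so in fact $K \cong \cone{\Phi \circ {\rho_A}_\ast}$.

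Finally, since $\Phi \circ {\rho_A}_\ast$ is a quasi-isomorphism by hypothesis, its mapping cone $K$ is acyclic, and the long exact sequence above shows that $\pi_B$ is a quasi-isomorphism. For the second assertion I would repeat the argument with $\ker \pi_A = \grHHcx^\bullet(A,X,B) \oplus \grHHcx^\bullet(B)$, whose only connecting differential is $\hochschild^{XB}$; the analogous uncurrying gives $\hochschild^{XB} = \Psi \circ {\rho_B}_\ast$ and hence $\ker \pi_A \cong \cone{\Psi \circ {\rho_B}_\ast}$, so the hypothesis forces this kernel to be acyclic and $\pi_A$ to be a quasi-isomorphism. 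The only genuinely delicate point is the bookkeeping of signs and the degree shift $[-1]$ needed to match the paper's cone convention exactly; once these are reconciled, the homological input (the cone of a quasi-isomorphism is acyclic, together with the long exact sequence) is immediate.
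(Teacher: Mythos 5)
Your proposal is correct and takes essentially the same route as the paper's own proof: both identify $\ker(\pi_B)$ with $\Cone(\Phi\circ{\rho_A}_\ast)$ via the computation $\Phi\circ{\rho_A}_\ast=\hochschild^{AX}$ (and, for the second statement, $\ker(\pi_A)$ with $\Cone(\Psi\circ{\rho_B}_\ast)$ via $\Psi\circ{\rho_B}_\ast=\hochschild^{XB}$), and then conclude from the acyclicity of the mapping cone of a quasi-isomorphism. The only cosmetic difference is that you spell out the long exact sequence of the short exact sequence $0\to\ker(\pi_B)\to\grHHcx^\bullet(X)\to\grHHcx^\bullet(B)\to 0$, whereas the paper simply remarks that a surjective cochain map is a quasi-isomorphism if and only if its kernel is acyclic.
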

\begin{proof}
We only prove the first statement. The second one follows from a similar argument.
 
Since the projection $\pi_B: \big(\grHHcx^\bullet(X), \hochschild + \partial \big) \onto \big(\grHHcx^\bullet(B), \hochschild^B + \partial_B \big)$ is surjective, it is a quasi-isomorphism if and only if the kernel 
$$
\ker(\pi_B) = \big( \grHHcx^\bullet(A) \oplus \grHHcx^{\bullet }(A,X,B)  , \ \hochschild^{AX}  + \hochschild^A + \hochschild^X +  \partial_X + \partial_A \ \big)
$$
is acyclic. Since  
\begin{align*}
 \Phi \circ {\rho_A}_\ast(f)(a_1,\cdots, a_p; \, x) & = (-1)^{r} f(a_1,\cdots, a_p)\cdot x \\
& = \hochschild^{AX}(f)(a_1,\cdots, a_p; \, x),
\end{align*}
for $f \in \HHcx^{p,r}(A)$, we have $\Phi \circ {\rho_A}_\ast=\hochschild^{AX}$. Thus the mapping cone 
$$
 \Cone(\Phi \circ {\rho_A}_\ast) = \big( \grHHcx^\bullet(A) \oplus \grHHcx^{\bullet }(A,X,B)  , \  \Phi \circ {\rho_A}_\ast  + \hochschild^A + \hochschild^X +  \partial_X + \partial_A \ \big)
$$
of $\Phi \circ {\rho_A}_\ast$ coincides with the kernel $ \ker(\pi_B)$. 
Since $\Phi \circ {\rho_A}_\ast$ is a quasi-isomorphism, the mapping cone $\Cone(\Phi \circ {\rho_A}_\ast) = \ker(\pi_B)$ is acyclic. Therefore, $\pi_B$ is a quasi-isomorphism. 
\end{proof}

\begin{pfdsKellerThm}
Let $(A,X,B)$ be a Keller admissible triple. By Lemma~\ref{prop:EmbedIsQisoKAT}, the embedding maps $\Phi$ and $\Psi$ are quasi-isomorphisms. Since the pushforward maps ${\rho_A}_\ast$ and ${\rho_B}_\ast$ are also quasi-isomorphisms, it follows from Lemma~\ref{lem:ProjQiso} that $\pi_A$ and $\pi_B$ are  quasi-isomorphisms. 
\end{pfdsKellerThm}

\begin{remark}
Theorem~\ref{thm:d.s.KellerThm} is a sum analogue of Keller's theorem in \cite{keller2003derived} which respects the \emph{product} Hochschild cohomology.  
In fact, in \cite{keller2003derived,MR2628794,MR3941473}, analogous theorems were proved for bigraded algebras where a Hochschild complex is endowed with three types of degrees --- one from Hochschild construction, two from the given bigrading. Although a direct sum is taken for bigraded components in \cite{MR2628794}, the total complex is still different from our sum Hochschild complexes. Thus, there is no clear relation between these theorems and Theorem~\ref{thm:d.s.KellerThm}.
\end{remark}

\section{Keller admissible triples associated with Lie algebras}\label{sec:KellerTrip-LieAlg}

Let $\frakg$ be a finite-dimensional Lie algebra. 
In this section, we prove that the triple 
\begin{align*}
(A,d_A) & =  \big(\cU\frakg, 0\big),\\
(B,d_B) & = \big(\Hom(S(\frakg[1]),\KK) , d_{\frakg}\big) \cong \big(S(\frakg[1])\dual, d_{\frakg}\big) ,
\\
(X,d_X) & = (\cU\frakg  \otimes S(\frakg[1]),d_X)  
\end{align*} 
is a Keller admissible triple, 
where $d_\frakg: S^\bullet(\frakg[1])\dual \to S^{\bullet +1}(\frakg[1])\dual $ is the Chevalley--Eilenberg differential defined as in \eqref{eq:LieAlgDiff},  $\cU\frakg$ is the universal enveloping algebra of $\frakg$, and  $d_X :\cU\frakg  \otimes S(\frakg[1])  \to \cU\frakg  \otimes S(\frakg[1]) $ is defined by
\begin{equation*}
\begin{split}
d_X(u \otimes x_1 \odot \cdots \odot x_n ) & := \sum_{i=1}^n (-1)^{i+1} \  u \cdot \suspend x_i \otimes x_1 \odot \cdots \widehat{x_i} \cdots \odot x_n  \\
 & \qquad + \sum_{i<j} (-1)^{i+j} \ u \otimes \suspend\inv [\suspend x_i, \suspend x_j]_\frakg \odot x_1 \cdots \widehat{x_i} \cdots \widehat{x_j} \cdots \odot x_n .
\end{split}
\end{equation*}
Here, $\cU\frakg$ is considered as a dg algebra concentrated at degree zero, $\suspend: \frakg[1] \to \frakg$ is the degree-shifting map of degree $+1$, $u \in \cU\frakg$ and $x_1, \cdots, x_n \in \frakg[1]$. This triple is adapted from \cite[Example~6.5]{MR1258406}.

It is well known that the complex $\big(\cU\frakg  \otimes S(\frakg[1]),   d_X \big)$ is a free resolution of $\KK$ in the category of $\cU\frakg$-modules.

\begin{remark}
In our grading setting, the degrees are chosen to be compatible with the Koszul sign convention, and a few classical formulations need to be modified correspondingly.  
In fact, in the literature, the expressions $(\Lambda^\bullet \frakg\dual, d_\frakg)$ and $(\cU\frakg  \otimes \Lambda^\bullet\frakg, \  d_X )$ are more common than the expressions of graded symmetric tensors in this paper. The two types of expressions are isomorphic as complexes. Nevertheless, in the category of graded vector spaces, an element in $\Lambda^\bullet\frakg$ should be considered to be of degree zero which is \emph{not} the expected degree. Thus, in order to avoid confusion and to keep the consistency of degree counting, we choose the expressions $S(\frakg[1])\dual$ and $\cU\frakg  \otimes S(\frakg[1])$. See Appendix~\ref{sec:ConvAlg} for more details.  
\end{remark}

We describe the right $S(\frakg[1])\dual$-action $\Sgvaction$ on $S(\frakg[1])$ induced by (graded) contraction: For any $x, x_i\in \frakg[1]$, $\xi \in (\frakg[1])\dual$, we define   
\begin{gather*}
1 \Sgvaction \xi  :=0, \\
(x_1 \odot \cdots \odot x_n) {\Sgvaction} \xi := \sum_{i=1}^n (-1)^{n-i} \pair{x_i}{\xi} (x_1 \odot \cdots \widehat{x_i} \cdots \odot x_n), 
\end{gather*}
and extend it by the module axiom. 
Similarly, we also have the right $S(\frakg[1])$-action $\Sgaction$ on $S(\frakg[1])\dual$. 
Note that, in this way, we have
\begin{gather*}
x \Sgvaction \xi = \pair{x}{\xi} = -\xi(x),\\
\xi \Sgaction x = \pair{\xi}{x} = \xi(x).
\end{gather*}

The space $X = \cU \frakg  \otimes S(\frakg[1])$ carries the left $\cU\frakg$-action induced by the multiplication in $\cU\frakg$ and the right $S(\frakg[1])\dual$-action induced by $\Sgvaction$.

\begin{lemma}
The dg vector space $\big(\cU\frakg  \otimes S(\frakg[1]), \  d_X \big)$ is a dg $\cU\frakg$--$S(\frakg[1])\dual$-bimodule.
\end{lemma}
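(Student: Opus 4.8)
The plan is to verify directly the two requirements in the definition of a dg $\cU\frakg$--$S(\frakg[1])\dual$-bimodule given at the start of Section~\ref{sec:HHcxDGAlg}: that the left and right actions make $X = \cU\frakg \otimes S(\frakg[1])$ a graded bimodule, and that $d_X$ satisfies the Leibniz rule. The first point is immediate. The left $\cU\frakg$-action multiplies into the first tensor factor while the right $S(\frakg[1])\dual$-action contracts the second factor via $\Sgvaction$, and since every $u \in \cU\frakg$ has degree $0$, the two actions commute with no Koszul sign, so $(a\cdot x)\Sgvaction\xi = a\cdot(x \Sgvaction\xi)$. A degree count shows the left action has degree $0$ and contraction by $\xi \in (\frakg[1])\dual$ raises total degree by $|\xi|$, so $X$ is a graded bimodule; that $d_X$ is a degree $+1$ map with $d_X^2 = 0$ is the classical statement, recalled above, that $(\cU\frakg \otimes S(\frakg[1]), d_X)$ is a free resolution of $\KK$.

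Because $d_A = 0$ and $\cU\frakg$ is concentrated in degree $0$, the Leibniz rule $d_X(axb) = a\, d_X(x)\, b + (-1)^{|x|}\, a\, x\, d_\frakg(b)$ splits into two independent checks. Taking $b = 1$ gives left $\cU\frakg$-linearity $d_X(a\cdot x) = a\cdot d_X(x)$, which is obvious from the defining formula since $u$ always sits at the far left and is untouched by $d_X$. The remaining content is the right derivation property
\[
 d_X(x \Sgvaction b) = d_X(x) \Sgvaction b + (-1)^{|x|}\, x \Sgvaction d_\frakg(b), \qquad x \in X,\ b \in S(\frakg[1])\dual.
\]
Since $d_\frakg$ is a derivation of the algebra $S(\frakg[1])\dual$ and the right action is multiplicative, a short induction on the length of $b$ shows it suffices to establish this identity when $b = \xi$ ranges over the generators $(\frakg[1])\dual$. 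Thus the whole lemma reduces to a single identity for $x = u \otimes x_1 \odot \cdots \odot x_n$ and $\xi \in (\frakg[1])\dual$.

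To prove that identity I would split $d_X = \delta_1 + \delta_2$ into its linear part $\delta_1$ (the first sum, which shifts one generator $\suspend x_i$ into $\cU\frakg$) and its quadratic part $\delta_2$ (the second sum, which inserts the bracket $\suspend\inv[\suspend x_i, \suspend x_j]_\frakg$), and expand both sides. The terms in which $\delta_1$ and the $\xi$-contraction act on distinct factors match termwise, after reindexing, against the corresponding terms of $\delta_1(x)\Sgvaction\xi$; likewise the $\delta_2$-terms in which $\xi$ contracts one of the original factors match $\delta_2(x)\Sgvaction\xi$.

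The main obstacle, and the only place where genuine content enters, is the comparison of the remaining terms: those in which the $\xi$-contraction hits the \emph{newly created} bracket factor $\suspend\inv[\suspend x_i, \suspend x_j]_\frakg$ produced by $\delta_2$. These carry coefficients $\pair{\suspend\inv[\suspend x_i,\suspend x_j]_\frakg}{\xi}$, which by the defining formula \eqref{eq:LieAlgDiff} of the Chevalley--Eilenberg differential (the transpose of the Lie bracket) are exactly the structure constants appearing in $d_\frakg\xi$. Matching these against the correction term $(-1)^{|x|}\, x \Sgvaction d_\frakg(\xi)$ --- which removes two $S(\frakg[1])$-factors, since $d_\frakg\xi \in S^2(\frakg[1])\dual$ --- is the heart of the verification, and the accompanying sign bookkeeping (the signs $(-1)^{i+1}$ and $(-1)^{i+j}$ from $d_X$ against $(-1)^{n-k}$ from $\Sgvaction$, together with the Koszul signs from passing factors) is the main technical burden. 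Once these signs are shown to agree, the right derivation property follows, completing the proof that $(\cU\frakg \otimes S(\frakg[1]), d_X)$ is a dg $\cU\frakg$--$S(\frakg[1])\dual$-bimodule.
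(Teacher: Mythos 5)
Your strategy is sound and every identity you reduce to is true, but be aware that this is a genuinely different route from the paper's, and that your writeup stops at exactly the point where the content of the lemma lies. The paper's proof is a one-line citation of Example~\ref{ex:SgcoMod}: there $X=\cU\frakg\otimes S(\frakg[1])$ with the differential $d_X$ is recognized as the twisted tensor product $\cU\frakg\otimes_\tau S(\frakg[1])$, hence a right dg comodule over the dg coalgebra $(S(\frakg[1]),\partial_\frakg)$ by Example~\ref{ex:TwistedTensorComodule}; Proposition~\ref{prop:coModv.s.Mod} then converts this dg comodule into a left dg module over the convolution dg algebra $\Hom(S(\frakg[1]),\KK)\cong S(\frakg[1])\dual$, graded commutativity of $S(\frakg[1])\dual$ turns the left action into a right action, and a short computation identifies that right action with the contraction $\Sgvaction$; the left dg $\cU\frakg$-module structure is automatic for twisted tensor products. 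What that machinery buys is precisely the Koszul-sign bookkeeping you postpone: since $d_\frakg$ is (up to sign) the transpose of the coderivation $\partial_\frakg$ by \eqref{eq:LieAlgDiff}, and $\Sgvaction$ is the transpose of the comultiplication $\Delta_S$, the right Leibniz rule you want is literally the result of pairing $\xi$ against the dg comodule axiom $\phi_{M}\circ d_{X}=(d_{X}\otimes \id + \id\otimes \partial_\frakg)\circ \phi_{M}$, so the matching of structure constants of $d_\frakg\xi$ with the contracted bracket terms never has to be carried out by hand. Your direct verification buys self-containedness (no appendix machinery), but as written it is an outline at the decisive step: the graded-bimodule part and the left $\cU\frakg$-linearity are trivial, so for this lemma the signs \emph{are} the proof, and the sentence ``once these signs are shown to agree, the right derivation property follows'' defers rather than completes it. Either carry the generator computation for $b=\xi$ through to the end, or make the duality observation above, which collapses that computation to the (sign-free) comodule compatibility the paper invokes.
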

\begin{proof}
This lemma follows from Example~\ref{ex:SgcoMod}.
\end{proof}

\subsection{The action maps}\label{sec:ActionMap}

We first prove the action maps are weakly cone-nilpotent quasi-isomorphisms.

\begin{lemma}\label{lem:action1}
The action map
\begin{align*}
& \rho_A: (\cU\frakg, 0 )  \to \big( \Hom_{\op{(S( \frakg[1])\dual)}} \big(\cU\frakg \otimes S( \frakg[1]), \cU\frakg \otimes S( \frakg[1])\big), \partial_X \big) , \\
& \qquad  \rho_A(v)(u \otimes {\bf x})  = (vu) \otimes {\bf x},
\end{align*}
is a weakly cone-nilpotent quasi-isomorphism. 
\end{lemma}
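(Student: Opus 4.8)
The plan is to make the target complex completely explicit, deduce the quasi-isomorphism from Koszul duality, and then read off weak cone-nilpotency from the boundedness of the exterior direction. First I would use that $S(\frakg[1])$ is a free right $B=S(\frakg[1])\dual$-module of rank one: contraction with the top element $\omega=x_1\odot\cdots\odot x_n$, where $n=\dim\frakg$, furnishes an isomorphism $\theta\colon B\to S(\frakg[1])$, $\beta\mapsto\omega\Sgvaction\beta$, of right $B$-modules. Consequently $X=\cU\frakg\otimes S(\frakg[1])$ is free over $B$, and evaluation at $\cU\frakg\otimes\omega$ gives an isomorphism of graded $\cU\frakg$-$\cU\frakg$-bimodules
\[
\Hom_{\op B}(X,X)\;\xrightarrow{\ \cong\ }\;\Hom_\KK\big(\cU\frakg,\,\cU\frakg\otimes S(\frakg[1])\big),\qquad \phi\longmapsto\big(u\mapsto\phi(u\otimes\omega)\big),
\]
under which $\rho_A(v)$ becomes $u\mapsto vu\otimes\omega$. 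I would then transport $\partial_X=[d_X,\argument]$ and record how its pieces act on the \emph{exterior degree} (the $S^\bullet(\frakg[1])$-grading of the target): both the Koszul term and the bracket term of $d_X$ lower it by one, while left and right multiplication by $\cU\frakg$ preserve it.

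\emph{The quasi-isomorphism.} The structural reason $\rho_A$ is a quasi-isomorphism is that $(X,d_X)$ is at once a free resolution of the trivial left $\cU\frakg$-module $\KK$ and a free resolution of the trivial right $B$-module $\KK$; hence $\Hom_{\op B}(X,X)=\RHom_{\op B}(X,X)$ computes $\Ext^\bullet_{\op B}(\KK,\KK)$, which by Koszul duality between $\cU\frakg$ and the Chevalley--Eilenberg algebra $B$ is $\cU\frakg$, realised precisely by $\rho_A$. To keep this self-contained I would filter the explicit complex above by the sum of the PBW degree of $\cU\frakg$ and the exterior degree; on the associated graded only the Koszul (contraction) term of $d_X$ survives and $\mathrm{gr}\,\cU\frakg=S\frakg$, so the $E_0$-page is the endomorphism complex of the classical Koszul resolution of $\KK$ over $S\frakg$. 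Classical Koszul duality for the symmetric/exterior pair $(S\frakg,\Lambda\frakg\dual)$ identifies its cohomology with $S\frakg=\mathrm{gr}\,\cU\frakg$ concentrated on one line, the spectral sequence degenerates, and a comparison of filtrations shows $\rho_A$ induces the isomorphism on cohomology. In particular $\cone{\rho_A}$ is acyclic.

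\emph{Weak cone-nilpotency.} It remains to check condition (i): that $\cone{\rho_A}$ is pointwisely nilpotent. Here $A=\cU\frakg$ is \emph{infinite-dimensional}, so Lemma~\ref{lem:Mk} does not apply verbatim; its only use of finite-dimensionality — namely that $f(A^{\otimes p})$ lands in some filtration step — is instead supplied by the boundedness of the exterior degree by $n=\dim\frakg$. Filtering $\cone{\rho_A}$ by target exterior degree gives a \emph{finite} ascending filtration $\{0\}=M_0\subset M_1\subset\cdots\subset M_{n+1}=\cone{\rho_A}$ by $\cU\frakg$-$\cU\frakg$-subbimodules, since multiplication preserves exterior degree. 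As $\cone{\rho_A}$ is acyclic and its exterior-degree associated graded complex is acyclic as well, homological perturbation produces a contracting homotopy $h$ that strictly \emph{raises} the exterior degree, whence $h(M_{q+1})\subset M_q$. The computation in the proof of Lemma~\ref{lem:Mk} then goes through with the \emph{uniform} bound $N=n+1$: for every cochain $f$ one has $\frakh_k(f)=0$ once $k>n+1$. By Proposition~\ref{prop:PtwiseNilpotent} this gives $\grHH^\bullet(\cU\frakg,\cone{\rho_A})=0$, so $\rho_A$ is weakly cone-nilpotent.

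I expect the crux to be this last step: constructing a contracting homotopy of $\cone{\rho_A}$ adapted to the finite exterior-degree filtration and verifying that the exterior-degree associated graded is acyclic. This is exactly where the infinite dimension of $\cU\frakg$ would defeat a naive appeal to Lemma~\ref{lem:Mk}, and where the boundedness of the $S^\bullet(\frakg[1])$-grading by $\dim\frakg$ is essential.
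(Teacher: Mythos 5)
Your reduction of $\Hom_{\op B}(X,X)$ to an explicit complex via the freeness of $X$ over $B=S(\frakg[1])\dual$ (evaluation at $\cU\frakg\otimes\omega$) is fine, and is a legitimate alternative to the paper's identification via comodules and cogenerators. But the core of your quasi-isomorphism argument fails, and it fails at exactly the point the lemma is about. The equality $\Hom_{\op B}(X,X)=\RHom_{\op B}(X,X)$ is not available: $X$ is free as a \emph{graded} $B$-module, but all of its $B$-module generators sit in the single degree $-n$, so graded freeness does not give semifreeness, and $X$ is in fact not K-projective over $B$. To see that this is fatal rather than cosmetic, note that the unit map $\KK\to X$, $1\mapsto 1\otimes 1$, \emph{is} a quasi-isomorphism of dg right $B$-modules, so $\RHom_{\op B}(X,X)\simeq\RHom_{\op B}(\KK,\KK)$; and already for the one-dimensional abelian $\frakg$, where $B=\KK[x]/(x^2)$ with $|x|=1$ and zero differential as in Example~\ref{ex:SumProdHH}, a semifree resolution of $\KK$ over $B$ needs infinitely many generators and yields $\Ext^{0}_{D(\op B)}(\KK,\KK)\cong\prod_{i\ge 0}\KK$, a \emph{product}, whereas the lemma asserts $H^{0}\big(\Hom_{\op B}(X,X),\partial_X\big)\cong\cU\frakg\cong\bigoplus_{i\ge 0}\KK$, a \emph{sum}. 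So the naive complex and the derived one have genuinely different cohomology: both your identification $\Hom=\RHom$ and your appeal to Koszul duality (which, in the ordinary derived category, produces the completed, product-type algebra rather than $\cU\frakg$) are false, and the two errors happen to cancel to the expected answer without proving it. Your backup spectral sequence inherits the same defect: filtering by degrees of the target is not an exhaustive filtration of $\Hom_\KK(\cU\frakg,-)$ because $\cU\frakg$ is infinite dimensional, and the claimed $E_1$-identification by ``classical Koszul duality'' for $(S\frakg,\Lambda\frakg\dual)$ is precisely the abelian, associated-graded instance of the statement being proved --- which derived Koszul duality does not give (it gives the completion). The paper sidesteps the $B$-side entirely: using the free cogenerator $\pr:X\onto\cU\frakg$ (Proposition~\ref{prop:coGenerator}) it identifies $\Hom_{\op B}(X,X)$ with the Chevalley--Eilenberg complex of $\frakg$ with coefficients in $\Hom_\KK(\cU\frakg,\cU\frakg)$, whose cohomology is $\Ext^\bullet_{\op{\cU\frakg}}(\cU\frakg,\cU\frakg)\cong\cU\frakg$; no completion can appear there because $\cU\frakg$ is free over itself, and $\rho_A$ realizes a right inverse of this isomorphism on cohomology.

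Your weak cone-nilpotency argument also has a gap, and is more complicated than necessary. Every term of $\partial_X$ (Koszul term and bracket term) lowers the target exterior degree by exactly one, and $\rho_A$ lands in the top exterior degree; hence the associated graded of $\Cone(\rho_A)$ with respect to your exterior-degree filtration carries the \emph{zero} differential. It is therefore not acyclic, and homological perturbation cannot produce from it a contracting homotopy that raises the exterior degree, so the uniform bound $N=n+1$ is unsubstantiated. The paper's argument is a one-line degree count valid for an \emph{arbitrary} contracting homotopy of $M=\Cone(\rho_A)$: since $A=\cU\frakg$ is concentrated in degree $0$ and $M$ is bounded below, there is $N>0$ with $\Hom^{r}(A^{\otimes p},M)=0$ for $r<-N$; as $\frakh_k$ maps $\Hom^{r}(A^{\otimes p},M)$ into $\Hom^{r-k-1}(A^{\otimes p+k},M)$, every $f\in\grHHcx^{s}(A,M)$ satisfies $\frakh_k(f)=0$ once $k>s+N$, which is pointwise nilpotency. (You are right that Lemma~\ref{lem:Mk} does not apply to $\rho_A$; in the paper it is invoked only for $\rho_B$, where the algebra acting, $S(\frakg[1])\dual$, is finite dimensional.)
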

\begin{proof}
Since $B=S( \frakg[1])\dual$ is a graded commutative algebra, a right $S( \frakg[1])\dual$-module structure is equivalent to a left $S( \frakg[1])\dual$-module structure. Thus, by Proposition~\ref{prop:coModv.s.Mod} and Example~\ref{ex:SgcoMod}, we have 
$$
\Hom_{\op{(S( \frakg[1])\dual)}}^\bullet(X,X) = \coHom_{S( \frakg[1])}^\bullet(X,X),
$$
where $X = \cU \frakg \otimes S( \frakg[1])$. 
Furthermore, since the natural projection
$$
\pr: \cU \frakg \otimes S( \frakg[1]) \onto  \cU\frakg \otimes S^0( \frakg[1]) \cong \cU\frakg
$$
is a free cogenerator of the graded comodule $X= \cU \frakg \otimes S( \frakg[1])$, it follows from Proposition~\ref{prop:coGenerator} that the induced map
$$
\pr_\ast : \Hom_{\op B}^\bullet(X,X) \to \Hom_{\KK}^\bullet(\cU \frakg \otimes S( \frakg[1]), \cU\frakg) \cong \Hom_{\KK}^\bullet\big(S(\frakg[1]),\Hom_\KK(\cU\frakg,\cU\frakg)\big)
$$
is an isomorphism of graded vector spaces. See Appendix~\ref{sec:DGcoAlg} for details. 

Let $\tilde\partial_X$ be the differential on $\Hom\big(S(\frakg[1]),\Hom(\cU\frakg,\cU\frakg)\big)$ induced by $\partial_X$ under the isomorphism $\pr_\ast$. By \eqref{eq:coGenModMor}, one can show that 
\begin{align*}
\tilde\partial_X(f)( x_0 \odot \cdots \odot x_n)(u)&  = \sum_{i=0}^n (-1)^{n-i} \big(f( x_0 \odot \cdots \widehat{x_i} \cdots \odot x_n)(u)  \suspend x_i -f( x_0 \odot \cdots \widehat{x_i} \cdots \odot x_n)(u \suspend x_i)\big) \\
& \qquad -(-1)^n \sum_{i<j} (-1)^{i+j} f(\suspend\inv [\suspend x_i, \suspend x_j]_\frakg \odot x_0 \cdots \widehat{x_i} \cdots \widehat{x_j} \cdots \odot x_n)(u)  
\end{align*}
for $f \in \Hom\big(S^n(\frakg[1]),\Hom(\cU\frakg,\cU\frakg)\big)$, $x_i \in \frakg[1]$ and $u \in \cU\frakg$. Let $\action : \frakg \times \Hom(\cU\frakg,\cU\frakg) \to \Hom(\cU\frakg,\cU\frakg)$ be the Lie algebra action
$$
(y\action g)(u) = g(uy) - g(u) y 
$$
for $g \in \Hom(\cU\frakg,\cU\frakg)$ and $y \in \frakg$. Then the differential $\tilde\partial_X$ coincides with the Chevalley--Eilenberg differential $d_{\CE}^\action$ associated with the action $\action$.  

Recall that one has the isomorphism (see, for example, \cite[Exercise~7.3.5]{MR1269324}) 
$$
H_{\CE}^\bullet\big(\frakg, \Hom(\cU\frakg, \cU\frakg) \big) \cong \Ext_{\op{\cU\frakg}}^\bullet(\cU\frakg, \cU\frakg) \cong \cU\frakg.
$$

Since the action map $\rho_A$ induces a right inverse of the isomorphism $\pr_\ast:H\big(\Hom_{\op B}^\bullet(X,X), \partial_X\big) \to \cU\frakg$, the lemma follows.

For weak cone-nilpotency, we observe that the dg algebra $\cU\frakg$ is concentrated in degree $0$ and the mapping cone $M:=\Cone(\rho_{A})$ of $\rho_{A}$ is bounded below. Thus, there exists $N>0$ such that $\Hom^{r}(A^{\otimes n}, M)=0$ if $r<-N$. Therefore, for any choice of homotopy operator on $M$ and for any $f\in \grHHcx^{s}(A,M)$, the induced maps $\{\frakh_{k}\}$ satisfy $\frakh_{k}(f)=0$ for $k>s+N$. This implies that the mapping cone $M=\Cone(\rho_{A})$ is pointwisely nilpotent.
\end{proof}

The quasi-isomorphism property for the other action 
\[\rho_{B}:\big( \op{\big(S(\frakg[1])\dual\big)} ,  d_\frakg \big)  \to \big( \Hom_{\cU\frakg} \big(\cU\frakg  \otimes S(\frakg[1]), \cU\frakg  \otimes S(\frakg[1])\big), \partial_X \big)\]
follows from the proof of \cite[Lemma~6.5~(a)]{MR1258406}. In fact, the augmentation map $\varepsilon:\cU\frakg  \otimes S(\frakg[1]) \onto \cU\frakg \onto \KK$ induces a quasi-isomorphism 
$$
\varepsilon_\ast : \big( \Hom_{\cU\frakg}(\cU\frakg  \otimes S(\frakg[1]), \cU\frakg  \otimes S(\frakg[1])), \partial_X \big) \to  \big( \Hom_{\cU\frakg}\big(\cU\frakg  \otimes S(\frakg[1]), \KK \big),  \partial_{X, \KK} \big) \cong \big( S(\frakg[1])\dual ,  d_\frakg \big)
$$ 
which defines a left inverse for $\rho_B$. 

\begin{lemma}\label{lem:action2}
The mapping cone of 
$$
\varepsilon_\ast : \big( \Hom_{\cU\frakg}(\cU\frakg  \otimes S(\frakg[1]), \cU\frakg  \otimes S(\frakg[1])), \partial_X \big) \to   \big( \op{(S(\frakg[1])\dual)} ,  d_\frakg \big)
$$ 
is pointwisely nilpotent. In particular, the action map
\begin{align*}
& \rho_B: \big( \op{\big(S(\frakg[1])\dual\big)} ,  d_\frakg \big)  \to \big( \Hom_{\cU\frakg} \big(\cU\frakg  \otimes S(\frakg[1]), \cU\frakg  \otimes S(\frakg[1])\big), \partial_X \big), \\
& \qquad \rho_B(b)(u \otimes {\bf x})  = (-1)^{|{\bf x}||b|} u \otimes ({\bf x} \Sgvaction b), 
\end{align*}
is a weakly cone-nilpotent quasi-isomorphism.
\end{lemma}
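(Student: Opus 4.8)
The plan is to establish condition (iii) of weak cone-nilpotency for $\rho_B$. Since $\varepsilon_\ast$ is a quasi-isomorphism and a left inverse of $\rho_B$ (so that $\varepsilon_\ast\circ\rho_B=\id$), the map $\rho_B$ is itself a quasi-isomorphism; hence it suffices to prove that the mapping cone $M:=\cone{\varepsilon_\ast}$ is pointwisely nilpotent. As $\frakg$ is finite dimensional, $S(\frakg[1])$ is finite dimensional (it is the exterior algebra $\Lambda\frakg$ up to regrading), so $B=S(\frakg[1])\dual$ is a finite dimensional dg algebra and Lemma~\ref{lem:Mk} applies. Moreover $M$ is acyclic because $\varepsilon_\ast$ is a quasi-isomorphism. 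Thus everything reduces to exhibiting an exhausting ascending filtration of $M$ by $B$-$B$-subbimodules together with a contracting homotopy decreasing it by one step. Note that the degree-boundedness argument used for $\rho_A$ in Lemma~\ref{lem:action1} is \emph{not} available here: although $M$ is bounded in internal degree, $B$ has generators in positive degree, so the degrees occurring in $B^{\otimes(n+k)}$ grow with $k$ and the operators $\frakh_k$ need not vanish for large $k$.

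For the filtration I would use the Poincar\'e--Birkhoff--Witt filtration of $\cU\frakg$. Writing $X=\cU\frakg\otimes S(\frakg[1])$ and $\varepsilon:X\onto\KK$ for the augmentation, one has $M=\cone{\varepsilon_\ast}\cong\Hom_{\cU\frakg}\big(X,\cone{\varepsilon}\big)$, and I filter $M$ by the PBW order of the $\cU\frakg$-factor of the target $\cone{\varepsilon}$. The decisive point is that the right $B$-action on $X$ is the contraction $\Sgvaction$ on the $S(\frakg[1])$-factor, which commutes with the PBW filtration of $\cU\frakg$; consequently each filtration level is a $B$-$B$-subbimodule (though not a dg subbimodule, which is all Lemma~\ref{lem:Mk} needs), and the filtration is exhausting because every element of $\cU\frakg$ has finite PBW order.

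For the homotopy I would lift the classical contracting homotopy of the Koszul--Chevalley--Eilenberg resolution $X\onto\KK$. On the associated graded, which is the Koszul complex $S\frakg\otimes\Lambda\frakg\onto\KK$, the Euler homotopy $\kappa_0(P\otimes\omega)=\tfrac{1}{|P|+|\omega|}\sum_i(\partial_{x_i}P)\otimes(e_i\wedge\omega)$ contracts the complex, strictly lowers the polynomial degree, and annihilates its degree-zero part. Lifting $\kappa_0$ through the PBW isomorphism $\operatorname{gr}\cU\frakg\cong S\frakg$ (as in the proof of \cite[Lemma~6.5(a)]{MR1258406}) produces a contracting homotopy $\kappa$ on $X$ with $\kappa(F_kX)\subset F_{k-1}X$, and $\kappa$ raises the $S(\frakg[1])$-degree so that $\varepsilon\kappa=0$. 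Assembling $\kappa$ with the unit $\eta:\KK\to X$, $1\mapsto 1\otimes 1$, gives the contracting homotopy $\sigma(x,c)=(-\kappa x+\eta c,0)$ for $\cone{\varepsilon}$, and post-composition $h:=\sigma_\ast$ is a contracting homotopy for $M$. Here $h$ sends the $\Hom_{\cU\frakg}(X,X)$-summand into itself with strictly smaller PBW order, while it sends the summand $B\cong\Hom_{\cU\frakg}(X,\KK)$ into PBW order $0$ of $\Hom_{\cU\frakg}(X,X)$ via $\eta$. Reindexing so that the bottom term is $\{0\}$, that $B$ sits one level above the PBW-order-zero part, and that $h$ kills PBW order $0$, one obtains $h(M_{q+1})\subset M_q$ for all $q$; Lemma~\ref{lem:Mk} then yields pointwise nilpotency, and condition (iii) finishes the proof.

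The main obstacle is the construction and control of the lifted homotopy $\sigma$: one must produce $\kappa$ strictly decreasing the PBW order (quotable from \cite[Lemma~6.5(a)]{MR1258406}) and, above all, keep careful track of how the $\KK$-summand of $\cone{\varepsilon}$ --- equivalently the $B$-summand of $M$ --- feeds into PBW order $0$ under $\eta$, so that the \emph{strict} inclusion $h(M_{q+1})\subset M_q$ holds rather than merely $h(M_{q+1})\subset M_{q+1}$. The sign bookkeeping in $\sigma$ and the compatibility of $h$ with the $B$-action are routine.
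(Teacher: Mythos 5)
Your proposal is correct and takes essentially the same route as the paper: both reduce, via condition (iii) of weak cone-nilpotency, to showing $\Cone(\varepsilon_\ast)$ is pointwisely nilpotent, both filter $\Cone(\varepsilon_\ast)$ by the PBW order of the target $\cU\frakg$-factor (with the $B\cong\Hom_{\cU\frakg}(X,\KK)$-summand entering one level above order zero), and both conclude with Lemma~\ref{lem:Mk} using a contracting homotopy that strictly lowers PBW order while raising the $S(\frakg[1])$-degree. The only difference is presentational: where you invoke a lift of the Euler homotopy of the associated-graded Koszul complex, the paper carries out that lift explicitly, constructing the homotopy on $\Cone(\varepsilon)$ by induction over the total (PBW order plus $S(\frakg[1])$-degree) filtration $F^{-p}$ and deducing the strict PBW decrease from $h(F^{-p})\subset F^{-p}$ together with degree counting.
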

\begin{proof}
The mapping cone of $\varepsilon_\ast$ is 
\[\Cone(\varepsilon_\ast)\cong \Hom_{\cU\frakg}\Big(\cU\frakg \otimes S(\frakg[1]), \big(\cU \frakg \otimes S(\frakg[1])\big)[1]\oplus \KK\Big)\]
equipped with the differential $\partial:\Cone(\varepsilon_\ast) \to \Cone(\varepsilon_\ast)$,
\[
\partial(f):=d_{\varepsilon}\circ f -(-1)^{\degree{f}}f\circ d_{X}\]
where $d_{\varepsilon}$ is the differential on the mapping cone $\Cone(\varepsilon)$ of $\varepsilon: \cU\frakg \otimes S(\frakg[1])\to \KK$. 

Let $\{e_1, \cdots, e_d\}$ be a basis for $\frakg[1]$, $\suspend:\frakg[1] \to \frakg$ be the degree-shifting map of degree $+1$, and $\dsuspend: \cU\frakg \otimes S(\frakg[1]) \to \big(\cU\frakg \otimes S(\frakg[1])\big)[1]$ be the degree-shifting map of degree $-1$. Note that the mapping cone $\Cone(\varepsilon)$ is a filtered complex with respect to the filtration 
$$
F^{-p} := \Big(\bigoplus_{k=0}^p \cU\frakg^{\leq p-k} \otimes S^{k}(\frakg[1])\Big)[1] \oplus \KK
$$
whose basis is the set
$$
\{(0,1)\} \cup \{(\dsuspend (\suspend e_{i_{1}}\cdots \suspend e_{i_{l}}\otimes e_{j_{1}}\odot \cdots \odot e_{j_{k}}), 0) \mid  i_{1}\leq \cdots \leq i_{l}, \  j_{1} < \cdots < j_{k}, \  k+l \leq p  \}
$$
for $p\geq 0$, and $F^{-p}=0$ if $p<0$.
It is known \cite[Lemma~VII.4.1]{MR1438546} that the quotient complex $F^{-p}/F^{-p+1}$ is exact. As a result, the inclusion map $F^{-p+1} \into F^{-p}$ is a quasi-isomorphism, and thus $F^{-p}$ is exact for each $p$. 
See \cite[Section VII.4]{MR1438546}  or \cite[Section XIII.7]{MR0077480} for details.

We will inductively construct a sequence of homotopy operators $h_p:F^{-p} \to F^{-p}$ of degree $-1$ such that (i) $h_p|_{F^{-p+1}} = h_{p-1}$, and (ii) the equation $d_\varepsilon \circ h_p + h_p \circ d_\varepsilon =\id_{F^{-p}}$ holds in $F^{-p}$. Since $\cup_p F^{-p} = \Cone(\varepsilon)$, such a sequence defines a contracting homotopy $h:\Cone(\varepsilon) \to \Cone(\varepsilon)$ for $\Cone(\varepsilon)$ with the property $h(F^{-p}) \subset F^{-p}$.

In the case $p=0$, one can choose $h_0$ to be the isomorphism $\KK \xto\cong \cU\frakg^{\leq 0}  \otimes S^0(\frakg[1])$. Assume we have a homotopy operator $h_p:F^{-p} \to F^{-p}$ with properties (i) and (ii). Then since $F^{-p-1}$ is an exact sequence of vector spaces, one can assign the value of $h_{p+1}$ at 
$$
(\dsuspend (\suspend e_{i_{1}}\cdots \suspend e_{i_{l}}\otimes e_{j_{1}}\odot \cdots \odot e_{j_{k}}), 0), \quad i_{1}\leq \cdots \leq i_{l}, \  j_{1} < \cdots < j_{k}, \  k+l =p+1,
$$
inductively on $k$ so that the equation $d_\varepsilon \circ h_{p+1} + h_{p+1} \circ d_\varepsilon =\id_{F^{-p-1}}$ holds. In this way, one can obtain a homotopy operators $h_p$ with properties (i) and (ii) for each $p \geq 0$.

Recall that, by the Poincar\'e--Birkhoff--Witt theorem, 
$$
\{(0,1)\} \cup \{(\dsuspend (\suspend e_{i_{1}}\cdots \suspend e_{i_{l}}\otimes e_{j_{1}}\odot \cdots \odot e_{j_{k}}), 0) \mid  i_{1}\leq \cdots \leq i_{l}, \  j_{1} < \cdots < j_{k}  \}
$$
is a basis of $\big(\cU\frakg \otimes S(\frakg[1])\big)[1] \oplus \KK$. By the construction, $h((0,1))=(\dsuspend(1\otimes 1),0)$. Also, each basis vector of the form ${\bf v} =( \dsuspend(\suspend e_{i_{1}}\cdots \suspend e_{i_{l}}\otimes e_{j_{1}}\odot \cdots \odot e_{j_{k}}), 0)$ belongs to $F^{-(k+l)}$, and thus 
$$
h({\bf v}) \in F^{-(k+l)} \cap \big(\cU\frakg \otimes S^{k+1}(\frakg[1])\big)[1] \oplus \{0\} =  \big(\cU\frakg^{\leq l-1} \otimes S^{k+1}(\frakg[1])\big)[1] \oplus \{0\}.
$$
Therefore, we have 
\begin{equation}\label{eq:Ugh}
h\Big(\big(\cU\frakg^{\leq q} \otimes S(\frakg[1])\big)[1] \oplus \KK\Big) \subset  \big(\cU\frakg^{\leq q-1} \otimes S(\frakg[1])\big)[1] \oplus \{0\}.
\end{equation}

Now, we define $S(\frakg[1])\dual$-$S(\frakg[1])\dual$-subbimodules of $\Cone(\varepsilon_{\ast})$ by
\[M_{0}=\Hom_{\cU\frakg}\Big(\cU\frakg \otimes S(\frakg[1]), \big(\cU\frakg^{\leq 0} \otimes S(\frakg[1])\big)[1]\oplus \{0\} \Big)\]
 and
\[M_{q}=\Hom_{\cU\frakg}\Big(\cU\frakg \otimes S(\frakg[1]), \big(\cU \frakg^{\leq q} \otimes S(\frakg[1])\big)[1]\oplus \KK \Big)\]
for each $q>0$. Then $\{M_{q}\}_{q\geq 0}$ forms an ascending filtration of $\Cone(\varepsilon_{\ast})$ satisfying $\bigcup_{q} M_{q} = \Cone(\varepsilon_{\ast})$. Moreover, \eqref{eq:Ugh} implies that the induced contracting homotopy $\tilde{h}=h_{\ast}$ for $\Cone(\varepsilon_{\ast})$ satisfies $\tilde{h}(M_{q})\subset M_{q-1}$. Thus, by Lemma~\ref{lem:Mk}, the mapping cone $\Cone(\varepsilon_\ast)$ is pointwisely nilpotent, and thus $\rho_B$ is weakly cone-nilpotent.
\end{proof}

\subsection{Exactness of the sequences}

Now we show that the sequences \eqref{eq:KellerSeqA} and \eqref{eq:KellerSeqB} are exact for the triple $(A,X,B) = (\cU\frakg,\, \cU\frakg \otimes S(\frakg[1]), S(\frakg[1])\dual)$.

Let $V$ be a vector space concentrated at degree zero, $\{e_1, \cdots, e_d\}$ be a basis for $V[1]$, and $\{\epsilon^1, \cdots,\epsilon^d\}$ be the dual basis for $(V[1])\dual$ such that $\pair{\epsilon^j}{e_i} = \delta_i^j$. 
We denote 
$$
\omega := e_1 \odot \cdots \odot e_d \in S^d(V[1]), \quad 
\tau:= \epsilon^d \odot \cdots \odot \epsilon^1 \in S^d(V[1])\dual.
$$
To prove the exactness of \eqref{eq:KellerSeqA}, we need the following technical lemma. 

\begin{lemma}\label{lem:TopForm} 
For ${\bf x} \in S(V[1])$, $b \in S(V[1])\dual$, we have   
\begin{itemize}
\item[(i)]
$\omega \Sgvaction (\tau \Sgaction {\bf x}) = (-1)^{d- |{\bf x}|} \ {\bf x}$,
\item[(ii)]
$\tau \Sgaction ({\bf x} \Sgvaction b) = (-1)^{|b|} \  (\tau \Sgaction {\bf x}) \odot b$.
\end{itemize}
\end{lemma}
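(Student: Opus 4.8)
The plan is to reduce both identities to explicit computations on a monomial basis and then to track Koszul signs carefully. Since $V$ sits in degree $0$, the space $V[1]$ is concentrated in degree $-1$, so its generators are odd; consequently $S(V[1])$ and $S(V[1])\dual$ behave as exterior algebras, every monomial is squarefree, and a basis of $S(V[1])$ is given by the elements $e_I := e_{i_1}\odot\cdots\odot e_{i_k}$ indexed by subsets $I=\{i_1<\cdots<i_k\}\subseteq\{1,\dots,d\}$, with $\degree{e_I}=-k$. Both identities are linear in ${\bf x}$, so I may assume ${\bf x}=e_I$ is such a monomial (in particular homogeneous), and it suffices to prove each identity on this basis.

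For part (i), I would compute the composite $\omega\Sgvaction(\tau\Sgaction{\bf x})$ in two steps. First, iterating the contraction formula for $\Sgaction$ (the analogue of the displayed formula for $\Sgvaction$), contracting the top covector $\tau=\epsilon^d\odot\cdots\odot\epsilon^1$ successively against $e_{i_1},\dots,e_{i_k}$ deletes exactly the factors $\epsilon^{i_1},\dots,\epsilon^{i_k}$ and returns, up to an explicit sign $\varepsilon_1(I)$, the complementary dual monomial indexed by $J=\{1,\dots,d\}\setminus I$ (kept in the decreasing order inherited from $\tau$). Second, contracting $\omega=e_1\odot\cdots\odot e_d$ against this dual monomial by the defining formula for $\Sgvaction$ deletes the factors $e_j$ for $j\in J$ and returns $e_I$ up to a second explicit sign $\varepsilon_2(I)$. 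The content of the lemma is then the sign identity $\varepsilon_1(I)\,\varepsilon_2(I)=(-1)^{d+k}=(-1)^{d-|{\bf x}|}$, which I would verify by accumulating the positional signs $(-1)^{n-i}$ over the successively shortened monomials in each contraction step.

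For part (ii), I would argue by induction on the length of $b$. Writing a monomial $b=b'\odot\xi$ with $\xi\in(V[1])\dual$ and using the module axiom ${\bf x}\Sgvaction(b'\odot\xi)=({\bf x}\Sgvaction b')\Sgvaction\xi$, the general case reduces to the single-covector case $\tau\Sgaction({\bf x}\Sgvaction\xi)=(-1)^{|\xi|}(\tau\Sgaction{\bf x})\odot\xi$: indeed $\tau\Sgaction({\bf x}\Sgvaction b)=\tau\Sgaction\big(({\bf x}\Sgvaction b')\Sgvaction\xi\big)$, and applying the single-covector identity and then the inductive hypothesis yields $(-1)^{|\xi|+|b'|}(\tau\Sgaction{\bf x})\odot b'\odot\xi=(-1)^{|b|}(\tau\Sgaction{\bf x})\odot b$. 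For the base case I again take ${\bf x}=e_I$ and $\xi=\epsilon^m$: the left-hand side first contracts $e_I$ against $\epsilon^m$, which is nonzero exactly when $m\in I$ and then deletes $e_m$ with a sign, after which $\tau$ is contracted against $e_{I\setminus\{m\}}$; the right-hand side contracts $\tau$ against $e_I$ and then wedges $\epsilon^m$, nonzero exactly when $m\notin J$, i.e. $m\in I$. Comparing the two complementary dual monomials, after reordering $(\tau\Sgaction e_I)\odot\epsilon^m$ into the standard decreasing order via the graded-commutativity of $S(V[1])\dual$, gives the identity.

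The main obstacle is purely the sign bookkeeping. The odd degree of the generators together with the positional signs $(-1)^{n-i}$ in the contraction formulas make the intermediate signs $\varepsilon_1,\varepsilon_2$ (and the sign incurred when wedging $\xi$ past $\tau\Sgaction{\bf x}$ in part (ii)) delicate; the crux is to check that they collapse precisely to $(-1)^{d-|{\bf x}|}$ in (i) and to $(-1)^{|b|}$ in (ii). I would organize this by fixing once and for all the convention that contracting a length-$n$ monomial against a single (co)vector occupying position $i$ contributes $(-1)^{n-i}$, and then computing the cumulative sign of a multi-step contraction as the sum of such contributions over the shortened monomials.
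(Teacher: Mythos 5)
Your proposal is correct, and part (i) follows essentially the paper's own computation: reduce to a basis monomial $e_I$, contract $\tau$ successively against $e_{i_1},\dots,e_{i_k}$ to produce (up to sign) the complementary dual monomial in reversed order, then contract $\omega$ against it, and check that the accumulated signs collapse to $(-1)^{d-|{\bf x}|}$; the two signs you call $\varepsilon_1(I)$ and $\varepsilon_2(I)$ are exactly the factors $(-1)^{i_1+\cdots+i_k-k(k+1)/2}$ and $(-1)^{(d-\tilde\imath_1-d+k)+\cdots+(d-\tilde\imath_{d-k}-1)}$ appearing in the paper's proof. For part (ii), however, you take a genuinely different route. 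The paper fixes a general monomial $b=\epsilon^J$ with $J\leq I$ and computes all four quantities $e_I\Sgvaction\epsilon^J$, $\tau\Sgaction(e_I\Sgvaction\epsilon^J)$, $\tau\Sgaction e_I$ and $(\tau\Sgaction e_I)\odot\epsilon^J$ explicitly in multi-index notation, comparing the signs in one shot. You instead induct on the length of $b$: since the action of a general $b$ is \emph{defined} by extending the single-covector formula via the module axiom ${\bf x}\Sgvaction(b'\odot\xi)=({\bf x}\Sgvaction b')\Sgvaction\xi$, your inductive step is formal (and the bookkeeping $(-1)^{|\xi|+|b'|}=(-1)^{|b|}$ is right), so only the base case ${\bf x}=e_I$, $\xi=\epsilon^m$ needs explicit contraction signs. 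One point to keep in mind: in the inductive step you apply the single-covector identity to ${\bf x}\Sgvaction b'$, which is not a basis monomial, so you need that identity for arbitrary ${\bf x}$ --- but linearity in ${\bf x}$ handles this, as you note at the outset. Your organization trades the paper's longer closed-form computation for a short base case plus a formal induction, which is a real economy in sign bookkeeping; the paper's version produces the explicit intermediate formula for $\tau\Sgaction e_I$ (the signed reversed complementary monomial), which it needs anyway for part (i). Both arguments rest on the same elementary verifications, and your claimed signs agree with what the paper's computation yields.
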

\begin{proof}
In the following computation, we use multi-index notations and Einstein summation convention. 
Let $E_i$ be the $d$-tuple with $1$ at the $i$-th component and $0$ elsewhere.  
We denote $e_I = e_{i_1} \odot \cdots \odot e_{i_k}$ and $\epsilon^J = \epsilon^{j_1} \odot \cdots \odot \epsilon^{j_l}$, where $I=E_{i_1} + \cdots + E_{i_k}$, $i_1 < \cdots < i_k$, and $J = E_{j_1}+  \cdots + E_{j_l}$, $j_1 < \cdots < j_l$. 
We say $I$ is smaller than $J$, denoted $I \leq J$, if $\{i_1, \cdots, i_k\} \subset \{j_1, \cdots, j_l\}$. If $I \leq J$, we denote $J-I$ to be the tuple associated with $\{j_1, \cdots, j_l\} \setminus \{i_1, \cdots, i_k\}$.

Let $T = E_1 + \cdots + E_d$, and $T-I = E_{\tilde\imath_1}+ \cdots + E_{\tilde\imath_{d-k}}$, $\tilde\imath_1 < \cdots < \tilde\imath_{d-k}$.  
We have
\begin{align*}
 \tau \Sgaction e_I & =  (\epsilon^d \cdots \epsilon^1) \Sgaction e_{i_1} \cdots \Sgaction e_{i_k}  = (-1)^{i_1+ \cdots + i_k -\frac{k(k+1)}{2}} \  \epsilon_{\rev}^{T - I}, 
\end{align*}
where $\epsilon_{\rev}^{T - I} = \epsilon^{\tilde\imath_{d-k}} \odot \cdots \odot \epsilon^{\tilde\imath_{1}}$. 
Thus, 
\begin{align*}
\omega \Sgvaction ( \tau \Sgaction e_I) & = (-1)^{i_1+ \cdots + i_k -\frac{k(k+1)}{2}} \   (e_1 \odot \cdots \odot e_d) \Sgvaction \epsilon^{\tilde\imath_{d-k}} \cdots \Sgvaction \epsilon^{\tilde\imath_{1}} \\
& = (-1)^{i_1+ \cdots + i_k -\frac{k(k+1)}{2}}  (-1)^{(d-\tilde\imath_1-d+k)+ \cdots + (d-\tilde\imath_{d-k}-1)} \ e_I \\
& = (-1)^{d-k} \ e_I.
\end{align*}
Since $\{e_I\}_I$ is a basis for $S( V[1])$, the first equation follows.

For the second equation, it suffices to verify it for any ${\bf x} = e_I$ and $b = \epsilon^J$. If $J \nleq I$, it is clear that 
$$
(\tau \Sgaction e_I) \odot \epsilon^J =0 = \tau \Sgaction (e_I \Sgvaction \epsilon^J).
$$
Thus, we can assume $J \leq I$. 
Let $j_p = i_{\jmath_p}$ and $I-J =  E_{i_{\tilde\jmath_1}} + \cdots + E_{i_{\tilde\jmath_{k-l}}}$. Then we have 
\begin{align*}
e_I \Sgvaction \epsilon^J & = (-1)^{l}(-1)^{(k-\jmath_1)+ \cdots + (k-\jmath_l)} e_{I-J}, \\
\tau \Sgaction (e_I \Sgvaction \epsilon^J) 
& = (-1)^{i_{\tilde\jmath_1}+ \cdots + i_{\tilde\jmath_{k-l}}} (-1)^{\jmath_1 + \cdots + \jmath_l +  \frac{k^2+k + l^2 + l}{2} } \  \epsilon_{\rev}^{T-I+J}. 
\end{align*}
Furthermore, 
\begin{align*}
\tau \Sgaction e_I & = (-1)^{(i_1 -1) + \cdots + (i_k -k)} \epsilon_{\rev}^{T-I}, \\
(\tau \Sgaction e_I) \odot \epsilon^J 
& = (-1)^{(i_1 -1) + \cdots + (i_k -k)} (-1)^{(i_{\jmath_1} -\jmath_1) + \cdots + (i_{\jmath_l} -\jmath_l +l-1)} \epsilon_{\rev}^{T-I+J} \\
& = (-1)^l \ \tau \Sgaction (e_I \Sgvaction \epsilon^J).
\end{align*}
Thus the proof is complete.
\end{proof}

\begin{lemma}
For $(A,X,B) = (\cU \frakg, \ \cU\frakg  \otimes S(\frakg[1])  , \  S( \frakg[1])\dual)$, the sequence \eqref{eq:KellerSeqA} is exact.
\end{lemma}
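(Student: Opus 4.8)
The plan is to recognize the complex \eqref{eq:KellerSeqA} as the one computing $\Ext^\bullet_{\op B}(X,X)$ via the bar resolution, and then to prove its exactness by exhibiting $X = \cU\frakg \otimes S(\frakg[1])$ as a \emph{free} right $B = S(\frakg[1])\dual$-module. Writing $\beta_q := X \otimes B^{\otimes q} \otimes B$ for the free right $B$-modules of the bar resolution $\cdots \to \beta_1 \to \beta_0 \to X$, the adjunction $\Hom_{\op B}(\beta_q, X) \cong \Hom(X \otimes B^{\otimes q}, X)$ carries the induced differential precisely to ${\hochschild^X}_R$ (up to the signs dictated by the total-degree convention of Section~\ref{sec:HHcxDGAlg}), and identifies the kernel of the first differential with $\Hom_{\op B}(X,X)$. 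Thus \eqref{eq:KellerSeqA} is the augmented cochain complex computing $\Ext^\bullet_{\op B}(X,X)$, and its exactness is equivalent to the vanishing $\Ext^{>0}_{\op B}(X,X)=0$ together with $\Ext^0_{\op B}(X,X)=\Hom_{\op B}(X,X)$, which holds as soon as $X$ is projective over $B$.

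The freeness of $X$ is where Lemma~\ref{lem:TopForm} enters. I would introduce the $\KK$-linear map $\Theta\colon S(\frakg[1]) \to B$, ${\bf x} \mapsto \tau \Sgaction {\bf x}$, and read off its two defining properties from Lemma~\ref{lem:TopForm}: part (ii), $\tau \Sgaction({\bf x} \Sgvaction b) = (-1)^{|b|}(\tau \Sgaction {\bf x}) \odot b$, shows that $\Theta$ intertwines the contraction action $\Sgvaction$ with right multiplication in $B$ up to the Koszul sign $(-1)^{|b|}$, so that, after the customary sign twist, $\Theta$ is a morphism of right $B$-modules; part (i), $\omega \Sgvaction (\tau \Sgaction {\bf x}) = (-1)^{d-|{\bf x}|}{\bf x}$, exhibits $b \mapsto \pm\,\omega \Sgvaction b$ as a two-sided inverse, so that $\Theta$ is bijective. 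Hence $S(\frakg[1]) \cong B$ as right $B$-modules (up to an overall internal degree shift, and free of rank one on the top form $\omega$), and tensoring on the left with $\cU\frakg$ yields a right $B$-module isomorphism $X = \cU\frakg \otimes S(\frakg[1]) \cong \cU\frakg \otimes B$. The right-hand side is manifestly free as a right $B$-module, hence so is $X$; in particular $X$ is projective and $\Ext^{>0}_{\op B}(X,X)=0$, which is exactly the exactness of \eqref{eq:KellerSeqA}.

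Equivalently, and more in the explicit spirit of the paper, the identification $X \cong \cU\frakg \otimes B$ lets one transport the standard contracting homotopy of the bar complex of a free module directly onto \eqref{eq:KellerSeqA}: one uses $\Theta$, i.e.\ contraction against $\tau$, to extract from a vector $x \in X$ its $B$-coordinate relative to the generating set $\cU\frakg \otimes \omega$, and reinserts it as an additional tensor slot. I expect the only genuine difficulty to be bookkeeping rather than conceptual: matching ${\hochschild^X}_R$ with the bar differential requires tracking the signs $(-1)^{q+r-1}$, $(-1)^{q+r+j}$ and $(-1)^{r}$ coming from the total-degree grading, and checking that the Koszul sign $(-1)^{|b|}$ in $\Theta$ is compatible with them so that $\Theta$ is $B$-linear on the nose (equivalently, that the degree shift caused by contracting against the top form $\tau$ does not obstruct freeness). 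Once these signs are pinned down, the remaining homological input is entirely standard.
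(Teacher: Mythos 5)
Your proposal is correct, and it reaches the lemma by a more structural route than the paper, though both arguments turn on the same key input, Lemma~\ref{lem:TopForm}. The paper's own proof is a direct verification: it defines the explicit operator $h_R(f)\big((u\otimes {\bf x});b_1,\cdots,b_q\big)=(-1)^{q+r+1}(-1)^{d-|{\bf x}|}\,f\big((u\otimes\omega);(\tau\Sgaction{\bf x}),b_1,\cdots,b_q\big)$ and checks ${\hochschild^X}_R h_R+h_R{\hochschild^X}_R=\id$ from Lemma~\ref{lem:TopForm}, then identifies the kernel of the first map with $\Hom_{\op B}(X,X)$. You instead promote Lemma~\ref{lem:TopForm} to the statement that ${\bf x}\mapsto(-1)^{|{\bf x}|}\tau\Sgaction{\bf x}$ is a right $B$-module isomorphism $S(\frakg[1])\to B$, so that $X\cong\cU\frakg\otimes B$ is graded-free, and conclude by projectivity applied to the bar complex. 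As you note, these are two presentations of one mechanism: the paper's $h_R$ is exactly the standard bar-complex homotopy for a free module transported along your trivialization, with $\tau\Sgaction{\bf x}$ extracting the $B$-coordinate relative to the generator $u\otimes\omega$. Your route buys the conceptual explanation (exactness because $X$ is free over $B$); the paper's route buys self-containedness, since it never needs to match ${\hochschild^X}_R$ against the bar differential --- in your reduction one must check that, on $\Hom^r(X\otimes B^{\otimes q},X)$, the two differ by the overall factor $(-1)^{q+r-1}$, which is absorbable by rescaling each bigraded component, routine but exactly the bookkeeping you flag. Two small points to pin down if you write this up: Lemma~\ref{lem:TopForm}(i) literally provides only a left inverse of your $\Theta$, so bijectivity should be concluded from $\dim S(\frakg[1])=\dim S(\frakg[1])\dual=2^d<\infty$; and the freeness holds only up to an internal degree shift (the generator $\omega$ sits in degree $-d$), which is harmless for the vanishing of graded $\Ext$.
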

\begin{proof}
Define an operator
\[h_R: \Hom^r(X \otimes B^{\otimes q+1} , X) \to \Hom^r(X \otimes B^{\otimes q}, X),\]
 for each $q\geq 0$, by
$$
h_R(f)\big((u\otimes {\bf x})\,;  b_1, \cdots, b_q):= (-1)^{q+r+1}(-1)^{d-|{\bf x}|} \  f\big((u \otimes \omega)\,; (\tau \Sgaction {\bf x}), b_1, \cdots, b_q\big),
$$
where $u \in \cU \frakg$, ${\bf x} \in S(\frakg[1])$, $b_1, \cdots, b_q \in S(\frakg[1])\dual$.
By Lemma~\ref{lem:TopForm}, one can show that  
\begin{align*}
 (h_R {\hochschild^X}_R f)\big((u \otimes {\bf x})\,; b_1, \cdots, b_q \big) 
& = (-1)^{d-|{\bf x}|} f\big((u \otimes \omega)\Sgvaction (\tau \Sgaction {\bf x})\,; b_1, \cdots, b_q \big) \\
& \quad  + (-1)^{d-|{\bf x}|+1} f\big((u \otimes \omega) \,;((\tau \Sgaction {\bf x}) \odot b_1), \cdots, b_q \big) \\
& \quad + \sum_{j=1}^{q-1} (-1)^{d-|{\bf x}|+j+1} f\big((u \otimes \omega) \,; (\tau \Sgaction {\bf x}),  \cdots, b_jb_{j+1}, \cdots , b_q \big) \\
& \quad + (-1)^{d-|{\bf x}|+q+1} f\big((u \otimes \omega) \,; (\tau \Sgaction {\bf x}),  \cdots , b_{q-1} \big)\Sgvaction b_q,
\end{align*}
and 
\begin{align*}
({\hochschild^X}_R h_R f)\big((u \otimes {\bf x})\,; b_1, \cdots, b_q \big)  
& = (-1)^{d-|{\bf x}|} f\big((u \otimes \omega)\,; (\tau \Sgaction {\bf x} )\odot b_1 , \ b_2, \cdots, b_q \big) \\
& \quad + \sum_{j=1}^{q-1}(-1)^{j+d-|{\bf x}|}  f\big((u \otimes \omega)\,; (\tau \Sgaction {\bf x}), b_1, \cdots b_j b_{j+1} \cdots , b_q \big) \\
& \quad + (-1)^{q+d-|{\bf x}|} f\big((u \otimes \omega)\,; (\tau \Sgaction {\bf x}), b_1, \cdots , b_{q-1} \big) \Sgvaction b_q
\end{align*}
Thus, we have 
$$
{\hochschild^X}_R h_R +h_R {\hochschild^X}_R  = \id,
$$
and the cohomologies vanish except the zeroth cohomology. 
Since the zeroth cohomology is
$$
\ker\Big( \Hom^r(X,X) \xto{{\hochschild^X}_R} \Hom^r(X \otimes B, X) \Big) = \Hom^r_{\op B}(X,X),
$$
the proof is completed.
\end{proof}

Finally, we prove the exactness of the sequence \eqref{eq:KellerSeqB}.

\begin{lemma}
For $(A,X,B) = (\cU \frakg, \ \cU\frakg  \otimes S(\frakg[1])  , \  S(\frakg[1])\dual)$, the sequence \eqref{eq:KellerSeqB} is exact. 
\end{lemma}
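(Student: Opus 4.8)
The plan is to mirror the proof of the exactness of \eqref{eq:KellerSeqA}, but with the contracting homotopy built from the top forms $\omega,\tau$ replaced by one built from the free left $\cU\frakg$-module structure of $X$. Conceptually, \eqref{eq:KellerSeqB} is the complex obtained by applying $\Hom_{\cU\frakg}(\argument,X)$ to the bar resolution $\cU\frakg^{\otimes\bullet+1}\otimes X$ of the left $\cU\frakg$-module $X$, so its cohomology is $\Ext^\bullet_{\cU\frakg}(X,X)$. Since $X=\cU\frakg\otimes S(\frakg[1])$ is a \emph{free} left $\cU\frakg$-module (free on the graded vector space $S(\frakg[1])$), it is projective and these groups vanish in positive degrees; the point of the proof is to make this vanishing explicit through a contracting homotopy.

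First I would write an element of $X$ as $u\otimes\mathbf{x}$ with $u\in\cU\frakg$ and $\mathbf{x}\in S(\frakg[1])$, and define, for each $p\geq 1$, an operator $h_L:\Hom^r(A^{\otimes p}\otimes X,X)\to\Hom^r(A^{\otimes p-1}\otimes X,X)$ by
\[
h_L(f)\big(a_1,\cdots,a_{p-1};u\otimes\mathbf{x}\big):=(-1)^{r+1}\,f\big(a_1,\cdots,a_{p-1},u;1\otimes\mathbf{x}\big).
\]
This is well defined because the right-hand side is bilinear in $(u,\mathbf{x})$, and it preserves the internal degree $r$ since $\cU\frakg$ is concentrated in degree $0$. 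The formula encodes the freeness of $X$: one peels the $\cU\frakg$-component $u$ off the $X$-slot, inserts it as a new (last) $A$-slot, and resets the $X$-slot to the free generator $1\otimes\mathbf{x}$. It is the image, under $\Hom_{\cU\frakg}(\argument,X)$, of the canonical $\cU\frakg$-linear contraction of the bar resolution of the free module $X$ (the usual bar homotopy is only $\KK$-linear, but freeness of $X$ provides a $\cU\frakg$-linear one, which is exactly what survives application of $\Hom_{\cU\frakg}$).

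Next I would verify the homotopy identity ${\hochschild^X}_L\circ h_L+h_L\circ{\hochschild^X}_L=\id$ on $\Hom^r(A^{\otimes p}\otimes X,X)$ for all $p\geq 1$, by direct substitution into the explicit formula for ${\hochschild^X}_L$ (where every sign $(-1)^{r|a_0|}$ is trivial since $|a_i|=0$). The surviving term equal to $f$ itself comes from the last summand of ${\hochschild^X}_L$ inside $h_L\circ{\hochschild^X}_L$, through the identity $u\cdot(1\otimes\mathbf{x})=u\otimes\mathbf{x}$ which recovers a general element of $X$ from the free generator; every other term of $h_L\circ{\hochschild^X}_L$ is matched and cancelled by a term of ${\hochschild^X}_L\circ h_L$. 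Finally, since the kernel of ${\hochschild^X}_L:\Hom^r(X,X)\to\Hom^r(A\otimes X,X)$ is exactly $\{f:f(a\cdot x)=a\cdot f(x)\}=\Hom^r_A(X,X)$, the homotopy yields $H^p=0$ for $p\geq 1$ and $H^0=\Hom_A(X,X)$, which is the asserted exactness.

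The only genuine difficulty is the bookkeeping of Koszul signs in ${\hochschild^X}_L$: one must check that the single, $p$-independent sign $(-1)^{r+1}$ in the definition of $h_L$ — a slightly surprising feature, traceable to the degree-shift conventions built into ${\hochschild^X}_L$ — makes all intermediate terms cancel in pairs. I expect this sign verification, and not any conceptual point, to be the main obstacle, since everything else is forced by the freeness of $X$ over $\cU\frakg$.
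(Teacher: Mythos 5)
Your proof is correct and takes essentially the same route as the paper: the paper's own argument uses exactly the same contracting homotopy $h_L(f)(a_1,\cdots,a_p;u\otimes \mathbf{x}) = (-1)^{r+1}f(a_1,\cdots,a_p,u;1\otimes\mathbf{x})$, verifies ${\hochschild^X}_L h_L + h_L{\hochschild^X}_L = \id$, and identifies the zeroth cohomology with $\Hom_A(X,X)$. The bar-resolution/freeness interpretation you add is sound motivation but plays no additional logical role.
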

\begin{proof}
In order to prove this assertion, we define an operator 
$$
h_L: \Hom^r(A^{\otimes p+1} \otimes X, X) \to \Hom^r(A^{\otimes p} \otimes X, X),
$$ 
for each $p\geq 0$, by 
$$
h_L(f)\big( a_1, \cdots, a_p;  (u \otimes {\bf x}) \big) := (-1)^{r+1} f(a_1, \cdots, a_p, u; ( 1 \otimes {\bf x}) \big),
$$
where $a_1, \cdots, a_p, u \in \cU\frakg$ and ${\bf x} \in S(\frakg[1])$. 
It is straightforward to show that  
$$
{\hochschild^X}_L h_L  + h_L {\hochschild^X}_L = \id. 
$$
Thus, 
$$
H^n\big( \Hom^r(A^{\otimes \bullet} \otimes X, X), {\hochschild^X}_L\big) = 0, 
$$
for any $n>0$ and any $r \in \ZZ$. 
Furthermore, since the zeroth cohomology is 
$$
\ker\Big( \Hom^r(X, X) \xto{{\hochschild^X}_L} \Hom^r(A \otimes X, X) \Big)  = \Hom^r_{A} (X,X),
$$
the proof is completed.
\end{proof}

Therefore, we have the following 

\begin{theorem}\label{thm:LiealgHoch}
Let $\frakg$ be a finite-dimensional Lie algebra. The triple 
$
 \big(\cU \frakg, \ \cU\frakg  \otimes S( \frakg[1]) , \  S( \frakg[1])\dual\big)
$ 
is a Keller admissible triple. Therefore, the projections  
$$
\begin{tikzcd}
 & \ar[ld,two heads,"\pi_A"'] \grHHcx^\bullet(\cU \frakg \otimes S(\frakg[1])) 
  \ar[rd,two heads,"\pi_B"] & \\
 \grHHcx^\bullet(\cU \frakg) & & \grHHcx^\bullet(S(\frakg[1])\dual)
\end{tikzcd}
$$
induce isomorphisms of Gerstenhaber algebras on cohomologies. 
\end{theorem}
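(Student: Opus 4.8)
The plan is to assemble the results just established so as to verify, directly from the definition, the two conditions making $\big(\cU\frakg, \cU\frakg \otimes S(\frakg[1]), S(\frakg[1])\dual\big)$ a Keller admissible triple, and then to invoke Theorem~\ref{thm:d.s.KellerThm}.

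First I would check condition (i), that the action maps $\rho_A$ and $\rho_B$ are weakly cone-nilpotent quasi-isomorphisms. This is precisely the content of Lemma~\ref{lem:action1} for $\rho_A$ and of Lemma~\ref{lem:action2} for $\rho_B$. It is worth recording that the two maps satisfy genuinely different clauses of weak cone-nilpotency: $\rho_A$ falls under clause~(i), its mapping cone being pointwisely nilpotent because $\cU\frakg$ sits in degree $0$ and the cone is bounded below, forcing the operators $\frakh_k$ to vanish for large $k$; whereas $\rho_B$ falls under clause~(iii), the augmentation-induced map $\varepsilon_\ast$ furnishing a left inverse whose cone is pointwisely nilpotent via the filtered contracting homotopy built from the Poincar\'e--Birkhoff--Witt basis.

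Next I would check condition (ii), the exactness of the partial Hochschild sequences \eqref{eq:KellerSeqA} and \eqref{eq:KellerSeqB}. These are exactly the two exactness lemmas proved above: the sequence \eqref{eq:KellerSeqA} is contracted by the explicit homotopy $h_R$ assembled from the top forms $\omega \in S^d(\frakg[1])$ and $\tau \in S^d(\frakg[1])\dual$, whose verification rests on the two identities of Lemma~\ref{lem:TopForm}, while \eqref{eq:KellerSeqB} is contracted by the homotopy $h_L$ that feeds the last $\cU\frakg$-slot into the module argument. In both cases the zeroth cohomology is identified with $\Hom_{\op B}(X,X)$, respectively $\Hom_A(X,X)$, so the augmented sequences are exact.

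With both conditions in hand, the triple is a Keller admissible triple by definition, and Theorem~\ref{thm:d.s.KellerThm} immediately yields that the projections $\pi_A$ and $\pi_B$ induce isomorphisms of Gerstenhaber algebras $\grHH^\bullet(\cU\frakg) \cong \grHH^\bullet(\cU\frakg \otimes S(\frakg[1])) \cong \grHH^\bullet(S(\frakg[1])\dual)$ on cohomology. There is thus no residual obstacle at the level of this theorem itself; the genuine difficulty was discharged earlier, in establishing weak cone-nilpotency for $\rho_A$ and $\rho_B$ --- exactly where sum Hochschild cohomology fails to be a quasi-isomorphism invariant and the boundedness and filtration arguments of Lemmas~\ref{lem:action1} and~\ref{lem:action2} become indispensable.
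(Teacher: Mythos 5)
Your proposal is correct and takes essentially the same approach as the paper: Theorem~\ref{thm:LiealgHoch} is obtained there precisely by combining Lemma~\ref{lem:action1} and Lemma~\ref{lem:action2} (condition (i)) with the two exactness lemmas for \eqref{eq:KellerSeqA} and \eqref{eq:KellerSeqB} built from the homotopies $h_R$ and $h_L$ (condition (ii)), and then invoking Theorem~\ref{thm:d.s.KellerThm}. Your bookkeeping of which clause of weak cone-nilpotency each action map satisfies ($\rho_A$ clause (i), $\rho_B$ clause (iii) via $\varepsilon_\ast$) also matches the paper's remark in Section~\ref{sec:WCN}.
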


It follows from Theorem~\ref{thm:LiealgHoch} that one has an isomorphism $\grHH^\bullet(\cU \frakg)\cong \grHH^\bullet(S(\frakg[1])\dual)$ of the \textit{sum} Hochschild cohomologies. The following example, however, shows that an analogous isomorphism does not exist for the \textit{product} Hochschild cohomologies, i.e. $\pdHH^\bullet(\cU \frakg)\ncong \pdHH^\bullet(S(\frakg[1])\dual)$ in general.

\begin{example} \label{ex:KellerAdmissibleSumHoch}
For the 1-dimensional Lie algebra $\frakg=\KK$, we have $\cU\frakg=S\frakg\cong\bigoplus_{n=0}^{\infty} \KK$ and $(S(\frakg[1])\dual, 0)=(\KK[x]/(x^2),0)$ as in Example~\ref{ex:SumProdHH}. Then
$$
\grHH^{0}(\cU\frakg)\cong \bigoplus_{n=0}^{\infty} \KK \cong \grHH^{0}(S(\frakg[1])\dual,0).
$$
However, the $0$-th \emph{product} Hochschild cohomology of $S(\frakg[1])\dual$ is 
$$
 \pdHH^{0}(S(\frakg[1])\dual,0) \cong \prod_{n=0}^{\infty} \KK
$$
which is \emph{not} isomorphic to $\pdHH^{0}(\cU\frakg)= \grHH^{0}(\cU\frakg)$.  
\end{example}

\section{Application to Duflo theorem}\label{sec:KD}

A dg manifold $(\cM,Q)$ is a graded manifold $\cM$ together with a homological vector field $Q$. Such a structure plays an important role in various fields of mathematics. See, for example, \cite{MR1432574,MR2275685,MR2709144,2020arXiv200601376B}. In the present paper, we consider the dg manifold $(\frakg[1], d_\frakg)$ associated with a finite-dimensional Lie algebra $\frakg$ 
whose function algebra is the Chevalley--Eilenberg dg algebra $(S(\frakg[1])\dual, d_\frakg)$.  

On the dg manifold $(\frakg[1], d_\frakg)$, one has the dg algebra $\big(\Tpoly^{\bullet}(\frakg[1]), \schouten{d_\frakg}{\argument}\big)$ of polyvector fields and the dg algebra $\big(\Dpoly^{\bullet}(\frakg[1]), \hochschild+ \gerstenhaber{d_\frakg}{\argument}\big)$ of polydifferential operators on $(\frakg[1],d_\frakg)$.  
According to \cite[Theorem~4.3]{MR3754617}, one has the Duflo--Kontsevich-type map   
\begin{equation*}\label{eq:KDiso-g[1]}
\hkr \circ \td_{\frakg[1]}^{1/2}: \big( \Tpoly^\bullet(\frakg[1]), d_T  \big) \to \big( \Dpoly^\bullet(\frakg[1]), d_D \big),
\end{equation*}
which induces a graded algebra isomorphism on their cohomologies. Here, 
\begin{gather*}
d_T := \schouten{d_\frakg}{\argument}, \\
d_D :=\hochschild + \gerstenhaber{d_\frakg}{\argument}, 
\end{gather*}
and $\td_{\frakg[1]} \in \prod_{k \geq 0}\big(\Gamma(\Lambda^k T_{\frakg[1]}\dual)\big)^k $ denotes the Todd cocycle associated with the trivial connection on $\frakg[1]$.  We refer the reader to \cite{MR3754617} for an introduction to Duflo--Kontsevich-type theorem for dg manifolds.

Let $J \in \widehat{S}(\frakg\dual)$ be the \emph{Duflo element} which is the formal power series on $\mathfrak{g}$ defined by
 $J(x)= \det\left( \frac{1-e^{-\ad_x}}{\ad_x}\right)$,  $x \in \frakg$. Its square root $J^{1/2}$ acts on $ S\frakg$ as a formal differential operator, and this map induces an operator on the Chevalley--Eilenberg cohomology $H_{\CE}^\bullet(\frakg, S\frakg )$.  
In this section, we construct isomorphisms of graded algebras
\begin{gather*}
\Phi_{T}: \hcoh^\bullet( \Tpoly(\frakg[1]), d_T ) \xto\cong H_{\CE}^\bullet(\frakg, S\frakg ), \label{eq:PhiT} \\
\Phi_{D}: \hcoh^\bullet( \Dpoly(\frakg[1]),d_D ) \xto\cong H_{\CE}^\bullet(\frakg, \cU \frakg),\label{eq:PhiD}
\end{gather*}
and prove the following:

\begin{theorem}\label{thm:KDandDuflo}
The diagram
\begin{equation}\label{diag:KDandDuflo}
\begin{tikzcd}
\hcoh^\bullet( \Tpoly(\frakg[1]), d_T ) \arrow[r, "\Todd_{\frakg[1]}^{1/2}"] \arrow[d, "\Phi_{T}"] &  \hcoh^\bullet( \Tpoly(\frakg[1]), d_T ) \arrow[r, "\hkr"] \arrow[d, "\Phi_{T}"] & \hcoh^\bullet( \Dpoly(\frakg[1]),d_D )  \arrow[d, "\Phi_{D}"]  \\
  H_{\CE}^\bullet(\frakg, S\frakg ) \arrow[r, "J^{1/2}"]  & H_{\CE}^\bullet(\frakg, S\frakg ) \arrow[r, "\pbw"]  & H_{\CE}^\bullet(\frakg, \cU \frakg) 
\end{tikzcd}
\end{equation}
commutes. 
\end{theorem}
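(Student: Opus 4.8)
The plan is to factor the assertion through the two squares of \eqref{diag:KDandDuflo} and to verify each one separately: the left square states that $\Phi_T \circ \Todd_{\frakg[1]}^{1/2} = J^{1/2} \circ \Phi_T$, while the right square states that $\Phi_D \circ \hkr = \pbw \circ \Phi_T$. Pasting the two squares then yields the full diagram. Recall that $\Phi_T$ is induced by the natural cochain-level identification carrying $\big(\Tpoly^\bullet(\frakg[1]), d_T\big)$ to the Chevalley--Eilenberg complex $C_{\CE}^\bullet(\frakg, S\frakg)$, and that $\Phi_D$ is the composite of the isomorphism $\grHH^\bullet(S(\frakg[1])\dual, d_\frakg) \cong \grHH^\bullet(\cU\frakg)$ induced by the projections $\pi_A$ and $\pi_B$ of Theorem~\ref{thm:LiealgHoch} with the Cartan--Eilenberg isomorphism $\grHH^\bullet(\cU\frakg) \cong H_{\CE}^\bullet(\frakg, \cU\frakg)$.

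For the left square, the point is to identify the Todd cocycle of $\frakg[1]$ with the Duflo element. First I would compute the Atiyah cocycle of the tangent bundle $T_{\frakg[1]}$ equipped with the trivial connection; because the homological vector field $d_\frakg$ is built from the Lie bracket of $\frakg$, this Atiyah cocycle reduces to the adjoint action. Consequently the Todd class, obtained by evaluating the universal Todd series on this Atiyah cocycle, is identified under $\Phi_T$ with the formal power series $\det\!\big(\tfrac{1-e^{-\ad_x}}{\ad_x}\big)$, which is exactly the Duflo element $J$. Since both $\Todd_{\frakg[1]}^{1/2}$ and $J^{1/2}$ act by contraction with the square root of the corresponding power series on the $S\frakg$-coefficients, the identification of the two series yields commutativity of the left square.

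The right square is where the main difficulty lies; it is the identity singled out in the introduction as $\Phi_D \circ \hkr = \pbw \circ \Phi_T$. Here I would start from an arbitrary Chevalley--Eilenberg cochain in $C_{\CE}^\bullet(\frakg, S\frakg)$, apply $\Phi_T^{-1}$ to view it as a polyvector field on $\frakg[1]$, apply the antisymmetrizing Hochschild--Kostant--Rosenberg map $\hkr$ to obtain a polydifferential operator, and then transport the result across the explicit quasi-isomorphisms of Section~\ref{sec:WCN} and Section~\ref{sec:KellerTrip-LieAlg}: the embedding $\Phi$, the pushforward ${\rho_A}_\ast$, and the projections $\pi_A, \pi_B$. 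The crux is to show that, after this transport, the antisymmetrization underlying $\hkr$ becomes precisely the Poincar\'e--Birkhoff--Witt symmetrization $\pbw: S\frakg \to \cU\frakg$ on the coefficients, while the Chevalley--Eilenberg form is left untouched; equivalently, that the Keller admissible triple $\big(\cU\frakg, \cU\frakg \otimes S(\frakg[1]), S(\frakg[1])\dual\big)$ intertwines $\hkr$ and $\pbw$.

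I expect this last verification to be the principal obstacle: it is a direct but sign-sensitive computation in the coalgebra $S(\frakg[1])$, carried out in terms of the contraction actions $\Sgaction$ and $\Sgvaction$, the explicit homotopies $h_R$ and $h_L$ used to establish the exactness of \eqref{eq:KellerSeqA} and \eqref{eq:KellerSeqB}, and the free-cogenerator description of $X = \cU\frakg \otimes S(\frakg[1])$ from Lemma~\ref{lem:action1}. The coalgebra formulas collected in the appendices are the tools that keep the signs and the Poincar\'e--Birkhoff--Witt basis under control. Once both squares are shown to commute, Theorem~\ref{thm:KDandDuflo} follows, and in particular the Duflo--Kontsevich map $\pbw \circ J^{1/2}$ is recovered as an isomorphism of graded algebras.
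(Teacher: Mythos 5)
Your decomposition into two squares and your treatment of the left square match the paper: Section~\ref{sec:ToddDuflo} computes the Atiyah cocycle of the trivial connection on $\frakg[1]$, identifies it with the adjoint action, and hence identifies the Todd cocycle with the Duflo element $J$ under the isomorphism \eqref{eq:ToddisoDuflo}, exactly as you describe. The gap is in your plan for the right square. The isomorphism $\Phi_1$ is the zigzag ${\pi_A}_\ast \circ ({\pi_B}_\ast)\inv$, so to evaluate $\Phi_D \circ \hkr$ on a class you must \emph{lift} the Hochschild cocycle $\hkr(\argument) \in \grHHcx^\bullet(S(\frakg[1])\dual)$ along $\pi_B$ to a cocycle of $\grHHcx^\bullet(\cU\frakg \otimes S(\frakg[1]))$. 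Your proposal to ``transport the result across the projections $\pi_A, \pi_B$'' presupposes such a lift, but the natural candidate $\iota_B$ is \emph{not} a cochain map, so $\iota_B(\hkr(\argument))$ is in general not a cocycle and there is no obvious cochain-level representation of $({\pi_B}_\ast)\inv$. This is precisely the difficulty the paper flags immediately before proving Proposition~\ref{prop:HKR=PBW}, and your proposal offers no mechanism to resolve it.

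The paper's solution, absent from your plan, is to form the pullback complex $\hkr^\ast\grHHcx^\bullet(\cU\frakg \otimes S(\frakg[1]))$, on which $\hkr$ and $\pi_B$ are simultaneously lifted; the lifted projection $\tilde\pi_B$ is again a surjective quasi-isomorphism because $\ker\tilde\pi_B = \ker\pi_B$ is acyclic. One then shows that the two composites $\psi_1 = \tilde\Phi_2 \circ \pi_A \circ \widetilde\hkr$ and $\psi_2 = \pbw \circ \tilde\Phi_T \circ \tilde\pi_B$ agree up to an explicitly constructed homotopy $h$, built from the maps $\HmtpA$, $\HmtpB{q,r}$, $\HmtpC$ and $\pr_{\cU\frakg}$, with the sign function $\sgn(p,q,r)=(-1)^{qr+r+q(q+1)/2}$ solving a system of compatibility equations; the verification splits into the $A$-side, $B$-side and $X$-side term-by-term cancellations. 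Note also that the homotopies $h_R$ and $h_L$ you invoke serve only to prove exactness of the sequences \eqref{eq:KellerSeqA} and \eqref{eq:KellerSeqB}, i.e.\ that the triple is Keller admissible; they play no role in intertwining $\hkr$ with $\pbw$. Without the pullback complex, or some other device producing cocycle lifts along $\pi_B$ together with the homotopy correction, your right-square argument cannot be carried out as stated.
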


In this way, we obtain a precise relation between the Duflo--Kontsevich-type isomorphism \cite{MR3754617} for the dg manifold $(\frakg[1],d_\frakg)$ and the Duflo--Kontsevich isomorphism \cite{MR444841,MR2062626,MR2085348} for the Lie algebra $\frakg$. 
In particular, we recover the Duflo--Kontsevich theorem:

\begin{corollary}[Kontsevich]
The map
\begin{equation*}
\pbw \circ J^{1/2}: H_{\CE}^\bullet(\frakg, S\frakg ) \to H_{\CE}^\bullet(\frakg, \cU \frakg)
\end{equation*}
is an isomorphism of graded algebras.
\end{corollary}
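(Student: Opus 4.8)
Granting the commutativity of \eqref{diag:KDandDuflo} (Theorem~\ref{thm:KDandDuflo}), the Corollary is immediate: the top row $\hkr \circ \Todd_{\frakg[1]}^{1/2}$ is an isomorphism of graded algebras by the Duflo--Kontsevich-type theorem for the dg manifold $(\frakg[1], d_\frakg)$ \cite{MR3754617}, and the vertical maps $\Phi_T$, $\Phi_D$ are isomorphisms of graded algebras by construction (the latter via the Keller admissible triple of Theorem~\ref{thm:LiealgHoch} and the Cartan--Eilenberg isomorphism); hence the bottom row $\pbw \circ J^{1/2} = \Phi_D \circ (\hkr \circ \Todd_{\frakg[1]}^{1/2}) \circ \Phi_T^{-1}$ is a composite of graded algebra isomorphisms. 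The real content is therefore the commutativity of \eqref{diag:KDandDuflo}, whose proof I now sketch, verifying the two squares separately.

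I would address the right square first, as it carries the main content; its commutativity is the identity $\Phi_D \circ \hkr = \pbw \circ \Phi_T$ announced in the introduction. To prove it I would render $\Phi_D$ fully explicit as the composite of the natural identification $\hcoh^\bullet(\Dpoly(\frakg[1]), d_D) \cong \grHH^\bullet(S(\frakg[1])\dual, d_\frakg)$, the projection-induced isomorphism $\grHH^\bullet(S(\frakg[1])\dual, d_\frakg) \cong \grHH^\bullet(\cU\frakg)$ furnished by the embeddings $\Phi$, $\Psi$ of Lemma~\ref{prop:EmbedIsQisoKAT} together with the pushforwards ${\rho_A}_\ast$, ${\rho_B}_\ast$ and the projections $\pi_A$, $\pi_B$ of Lemma~\ref{lem:ProjQiso}, and finally the Cartan--Eilenberg isomorphism $\grHH^\bullet(\cU\frakg) \cong H_{\CE}^\bullet(\frakg, \cU\frakg)$. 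Then I would take a Chevalley--Eilenberg cocycle valued in $S\frakg$, read it as a polyvector field via $\Phi_T$, apply the Hochschild--Kostant--Rosenberg symmetrization to obtain a cochain in $\grHHcx(S(\frakg[1])\dual)$, lift it across the resolution $X = \cU\frakg \otimes S(\frakg[1])$, and compute its image under $\pi_A$ followed by Cartan--Eilenberg. The decisive point is that the symmetrization built into $\hkr$ along the $S(\frakg[1])$-factor of $X$ is transported --- through the bimodule actions $\rho_A$, $\rho_B$ and the contraction identities of Lemma~\ref{lem:TopForm} --- into exactly the Poincar\'e--Birkhoff--Witt symmetrization $\pbw: S\frakg \to \cU\frakg$, so that the explicit coalgebra formulas reduce the claim to a term-by-term identity of cochains. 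Compatibility with cup products, needed for an algebra statement, follows from Proposition~\ref{prop:ProjInclPreserveStr}.

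For the left square, the task is to show $\Phi_T \circ \Todd_{\frakg[1]}^{1/2} = J^{1/2} \circ \Phi_T$, that is, that the Todd cocycle $\td_{\frakg[1]}$ of the trivial connection on $\frakg[1]$ corresponds under the natural identification $\Phi_T$ to the Duflo element $J$. I would compute the Atiyah cocycle of $(\frakg[1], d_\frakg)$ for the trivial connection, note that it is governed by the adjoint action $\ad: \frakg \to \End(\frakg)$, and substitute it into the defining power series of the Todd class; after matching the Koszul-sign conventions this produces $\det\!\left(\frac{1 - e^{-\ad_x}}{\ad_x}\right) = J(x)$, whence the square roots agree and the two operators coincide as contraction (equivalently, formal differential) operators on $H_{\CE}^\bullet(\frakg, S\frakg)$.

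I expect the right square to be the main obstacle. Tracing $\hkr$ through the three-step isomorphism $\Phi_D$ --- in particular through the non-cochain inclusion $\iota_A$ and the contraction pairings of Lemma~\ref{lem:TopForm} --- is delicate, and the signs dictated by the Koszul convention must be carried through every step. The left square, by contrast, is a comparatively self-contained computation of the Atiyah and Todd classes for the trivial connection.
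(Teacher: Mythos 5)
Your deduction of the Corollary itself is exactly the paper's: granting Theorem~\ref{thm:KDandDuflo}, the bottom row of \eqref{diag:KDandDuflo} equals $\Phi_D \circ (\hkr \circ \Todd_{\frakg[1]}^{1/2}) \circ \Phi_T^{-1}$, a composite of graded algebra isomorphisms, the top row being an isomorphism by \cite[Theorem~4.3]{MR3754617}. Your sketch of the left square also matches Section~\ref{sec:ToddDuflo}: the Atiyah cocycle of the trivial connection is $\at_\frakg(x,y)=\suspend\inv[\suspend x,\suspend y]_\frakg$, its Berezinian gives $\td_{\frakg[1]}(x)=\det\bigl((1-e^{-\ad_{\suspend x}})/\ad_{\suspend x}\bigr)=J(\suspend x)$ under the identification \eqref{eq:ToddisoDuflo}, and $\frakg$-invariance of the Todd cocycle lets $\td_{\frakg[1]}^{1/2}$ act on cohomology.

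The gap is in your plan for the right square, i.e.\ Proposition~\ref{prop:HKR=PBW}, which you yourself identify as the real content. You propose to apply $\hkr$ to a cocycle, ``lift it across the resolution $X=\cU\frakg\otimes S(\frakg[1])$,'' project by $\pi_A$, and reduce to a term-by-term identity of cochains. This is precisely the step the paper explains cannot be carried out directly: the inclusion $\iota_B:\grHHcx^\bullet(S(\frakg[1])\dual)\into\grHHcx^\bullet(X)$ is \emph{not} a cochain map (your text names $\iota_A$, but $\iota_B$ is the relevant one), so $({\pi_B}_\ast)^{-1}$ has no cochain-level representative; a lift of a given cocycle exists only abstractly, via acyclicity of $\ker(\pi_B)$, with no formula and well-defined only up to coboundary, so there is nothing explicit to compute with. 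Moreover, the two composites $\Phi_2\circ{\pi_A}_\ast\circ({\pi_B}_\ast)^{-1}\circ\hkr$ and $\pbw\circ\Phi_T$ do \emph{not} agree on the nose at cochain level --- in the paper's formulation $\psi_1$ and $\psi_2$ differ on the $A$- and $B$-components and are only homotopic --- so a ``term-by-term identity of cochains'' is false as stated, not merely hard. The paper's actual device is to form the pullback complex $\hkr^\ast\grHHcx^\bullet(\cU\frakg\otimes S(\frakg[1]))$, replacing $({\pi_B}_\ast)^{-1}$ by the surjective quasi-isomorphism $\tilde\pi_B$ with acyclic kernel, and then to construct an explicit homotopy operator $h$ (built from $\HmtpA$, $\HmtpB{q,r}$, $\HmtpC$ with the sign $\sgn(p,q,r)=(-1)^{qr+r+q(q+1)/2}$) satisfying $\psi_1-\psi_2=h\circ D+d_{\CE}^{\cU\frakg}\circ h$; the PBW symmetrization emerges from $h\circ\hochschild^{AX}$ on the $A$-side and $h\circ\hochschild^{XB}\circ\hkr$ on the $B$-side. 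Without this pullback-plus-homotopy mechanism (or an equivalent one) your sketch does not close. Incidentally, Lemma~\ref{lem:TopForm} plays no role in this comparison --- it is used to prove exactness of \eqref{eq:KellerSeqA} --- so citing its contraction identities as the transport mechanism is off target.
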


Note that if one replaces the \emph{direct-sum} total cohomologies by the \textit{direct-product} total cohomologies in \eqref{diag:KDandDuflo}, many issues arise. For instance, the map $\Phi_{D}$ may fail to be an isomorphism (see Example~\ref{ex:KellerAdmissibleSumHoch} and Section~\ref{sec:Dpolyg1}).

The diagram \eqref{diag:KDandDuflo} consists of two parts: left half and right half. The commutativity of the left half diagram is established in Section~\ref{sec:ToddDuflo}, and the commutativity of the right half diagram is proved in Proposition~\ref{prop:HKR=PBW}.

\subsection{Polyvector fields on $\frakg[1]$}

The dg algebra $\Tpoly^{\bullet}(\frakg[1])$ of polyvector fields on $(\frakg[1],d_{\frakg})$ consists of the graded vector space
\begin{equation*}\label{eq:Tpolyg[1]}
\Tpoly^\bullet(\frakg[1])  =\sections{\frakg[1]; S(\tangent{\frakg[1]}[-1])}\cong S(\frakg[1])\dual \otimes S\frakg
\end{equation*}
equipped with the differential $d_T = \schouten{d_\frakg}{\argument}$  and the natural multiplication 
\begin{equation*}\label{eq:Tpolyg[1]multiplication}
({\frak f}_1\otimes \tilde{\bold x}_1) \cdot ({\frak f}_2\otimes \tilde{\bold x}_2) = ({\frak f}_1\odot {\frak f}_2)\otimes (\tilde{\bold x}_1\odot \tilde{\bold x}_2),
\end{equation*}
where $\schouten{\argument}{\argument}$ is the Schouten bracket, ${\frak f}_1,{\frak f}_2 \in S(\frakg[1])\dual$ and $\tilde{\bold x}_1,\tilde{\bold x}_2 \in S\frakg$.

Let $\tilde{\Phi}_{T}:\Tpoly^{\bullet}(\frakg[1])\rightarrow \Hom(S^{\bullet}(\frakg[1]), S\frakg )$ be the map 
\[\tilde{\Phi}_{T}({\frak f} \otimes \suspend x_{1}\odot \cdots \odot \suspend x_{q}): \mathbf{y}\mapsto \pair{{\frak f}}{ \mathbf{y}}\cdot \suspend x_{1}\odot \cdots \odot \suspend x_{q} \]
where ${\frak f} \in S(\frakg[1])\dual$, $x_{i} \in \frakg[1]$, $\mathbf{y}\in S(\frakg[1])$ are homogeneous, and $\suspend:\frakg[1] \rightarrow \frakg$ is the degree-shifting map of degree $+1$. Here, the dg algebra $\Hom(S^{\bullet}(\frakg[1]), S\frakg )$ is equipped with the convolution product $\conv$ and the Chevalley--Eilenberg differential induced by the adjoint action.
See Appendix~\ref{sec:DGcoAlg} for the precise definitions.  
The following lemma follows from a direct computation.

\begin{lemma}
The map $\tilde{\Phi}_{T}:\Tpoly^{\bullet}(\frakg[1])\rightarrow \Hom(S^{\bullet}(\frakg[1]), S\frakg )$ is an isomorphism of dg algebras. In particular, the induced map 
$$
\Phi_{T}:\hcoh^{\bullet}( \Tpoly(\frakg[1]), d_T ) \rightarrow  H_{\CE}^\bullet(\frakg, S\frakg )
$$ 
is an isomorphism of graded algebra.
\end{lemma}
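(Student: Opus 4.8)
The plan is to verify that $\tilde{\Phi}_{T}$ is simultaneously a bijection, a morphism of algebras, and a cochain map; the statement about $\Phi_{T}$ will then follow by passing to cohomology.

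First I would establish that $\tilde{\Phi}_{T}$ is an isomorphism of graded vector spaces. Since $\frakg$ is finite-dimensional, each $S^{q}(\frakg[1])$ is finite-dimensional, so the pairing $\pair{\argument}{\argument}\colon S(\frakg[1])\dual \times S(\frakg[1])\to\KK$ is perfect in each degree and identifies $S^{q}(\frakg[1])\dual$ with $\Hom(S^{q}(\frakg[1]),\KK)$. Tensoring this identification with $\id_{S\frakg}$ and using the degreewise finite-dimensionality to write $\Hom(S(\frakg[1]),S\frakg)\cong\Hom(S(\frakg[1]),\KK)\otimes S\frakg$, one sees that $\tilde{\Phi}_{T}$ is exactly this isomorphism, hence bijective.

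Next I would check multiplicativity. The convolution product on $\Hom(S(\frakg[1]),S\frakg)$ has the form $(f\conv g)(\mathbf{y})=\sum \pm\, f(\mathbf{y}_{(1)})\odot g(\mathbf{y}_{(2)})$, using the comultiplication of the symmetric coalgebra $S(\frakg[1])$. Evaluating $\tilde{\Phi}_{T}(\mathfrak{f}_{1}\otimes\mathbf{x}_{1})\conv\tilde{\Phi}_{T}(\mathfrak{f}_{2}\otimes\mathbf{x}_{2})$ on $\mathbf{y}$ produces $\sum\pm\,\pair{\mathfrak{f}_{1}}{\mathbf{y}_{(1)}}\pair{\mathfrak{f}_{2}}{\mathbf{y}_{(2)}}\,\mathbf{x}_{1}\odot\mathbf{x}_{2}$. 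The decisive input is the duality between the product on $S(\frakg[1])\dual$ and the coproduct on $S(\frakg[1])$ recorded in the conventions, namely $\pair{\mathfrak{f}_{1}\odot\mathfrak{f}_{2}}{\mathbf{y}}=\sum\pm\,\pair{\mathfrak{f}_{1}}{\mathbf{y}_{(1)}}\pair{\mathfrak{f}_{2}}{\mathbf{y}_{(2)}}$; combining the two identities, and checking that the Koszul signs match, yields $\tilde{\Phi}_{T}(\mathfrak{f}_{1}\otimes\mathbf{x}_{1})\conv\tilde{\Phi}_{T}(\mathfrak{f}_{2}\otimes\mathbf{x}_{2})=\tilde{\Phi}_{T}\big((\mathfrak{f}_{1}\odot\mathfrak{f}_{2})\otimes(\mathbf{x}_{1}\odot\mathbf{x}_{2})\big)$.

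The main step, and the principal obstacle, is to show that $\tilde{\Phi}_{T}$ intertwines $d_{T}=\schouten{d_\frakg}{\argument}$ with the Chevalley--Eilenberg differential $d_{\CE}^{\ad}$ on $\Hom(S(\frakg[1]),S\frakg)=C_{\CE}^{\bullet}(\frakg,S\frakg)$ associated with the adjoint action. I would split the Schouten bracket with the explicit homological vector field $d_\frakg$ into the summand differentiating the $S(\frakg[1])\dual$ factor and the summand acting on the $S\frakg$ factor, and match these, term by term, with the two pieces of $d_{\CE}^{\ad}$: the one built from the Lie bracket of $\frakg$ acting on the exterior factor $S(\frakg[1])\dual$, and the one built from the adjoint action of $\frakg$ on the coefficients $S\frakg$. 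This computation again rests on the product--coproduct duality from the conventions together with the derivation property of the Schouten bracket; the delicate point is to keep the degree-shift $\suspend$ and the Koszul signs consistent so that the two differentials agree on the nose rather than merely up to sign. Once $\tilde{\Phi}_{T}$ is known to be an isomorphism of dg algebras, passing to cohomology gives a multiplicative isomorphism of graded vector spaces, and since the cohomology of $\big(\Hom(S(\frakg[1]),S\frakg),d_{\CE}^{\ad}\big)$ is by definition $H_{\CE}^{\bullet}(\frakg,S\frakg)$ with its cup-product algebra structure, the induced map $\Phi_{T}\colon\hcoh^{\bullet}(\Tpoly(\frakg[1]),d_{T})\to H_{\CE}^{\bullet}(\frakg,S\frakg)$ is an isomorphism of graded algebras.
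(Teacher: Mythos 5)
Your proposal is correct and takes essentially the same route as the paper: the paper offers no written proof beyond declaring that the lemma ``follows from a direct computation,'' and your three-step verification --- bijectivity from the finite-dimensionality of $S(\frakg[1])$, multiplicativity from the duality between the product on $S(\frakg[1])\dual$ and the coproduct on $S(\frakg[1])$, and the cochain property by matching the two pieces of $\schouten{d_\frakg}{\argument}$ with the Koszul and adjoint-action terms of $d_{\CE}$ in \eqref{eq:CEdiff} --- is exactly that computation, with the passage to cohomology being standard.
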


\subsection{Polydifferential operators on $\frakg[1]$}\label{sec:Dpolyg1}

The dg algebra $\Dpoly^{\bullet}(\frakg[1])$ of polydifferential operators on $(\frakg[1], d_\frakg)$ is the graded vector space  
\begin{equation*}\label{eq:Dpolyg[1]}
\Dpoly^\bullet(\frakg[1]) \cong \bigoplus_{p+r = \bullet} \Big(S(\frakg[1])\dual \otimes \big(S(\frakg[1])\big)^{\otimes p} \Big)^r  \cong   \bigoplus_{p+r = \bullet} \Hom^r\big((S(\frakg[1])\dual)^{\otimes p}, S(\frakg[1])\dual \big), 
\end{equation*}
equipped with the differential $d_D = \hochschild + \gerstenhaber{d_\frakg}{\argument}$ and the cup product. In other words, the dg algebra $\Dpoly^\bullet(\frakg[1])$ is isomorphic to the dg algebra $\grHHcx^\bullet(S(\frakg[1])\dual,d_{\frakg})$ of Hochschild cochains of the dg algebra $(S(\frakg[1])\dual, d_\frakg)$, and thus, 
$$
\hcoh^{\bullet}( \Dpoly(\frakg[1]), d_D ) \cong  \grHH^{\bullet}(S(\frakg[1])\dual,d_\frakg).
$$
We will omit $d_\frakg$ in the Hochschild cohomology/complex for simplicity.

By Theorem~\ref{thm:LiealgHoch}, there is an isomorphism 
$\Phi_{1}:\grHH^{\bullet}(S(\frakg[1])\dual) \to \grHH^{\bullet}(\cU \frakg)$
of graded algebras. Furthermore, it is well known that $\grHH^{\bullet}(\cU \frakg)$ is isomorphic to $H_{\CE}^{\bullet}(\frakg,\cU\frakg)$. In fact, this isomorphism is represented by the cochain map $\tilde{\Phi}_{2}:  \Hom(\cU\frakg^{\otimes \bullet}, \cU\frakg)\to \Hom(S^{\bullet}(\frakg[1]),\cU\frakg),$ 
\begin{equation}\label{eq:HH(Ug)isoHce(Ug)}
\tilde{\Phi}_{2}(f):x_{1}\odot \cdots \odot x_{p} \mapsto \sum_{\sigma\in S_p}(-1)^{\sigma } f(\suspend x_{\sigma(1)}\otimes \cdots \otimes \suspend x_{\sigma(p)}) ,
\end{equation}
for $x_1, \cdots, x_p \in \frakg[1]$ and $f \in \Hom(\cU\frakg^{\otimes p}, \cU\frakg)$. The dg algebra $\Hom(S^\bullet(\frakg[1]),\cU\frakg)$ is equipped with the convolution product $\conv$ and the Chevalley--Eilenberg differential $d_{\CE}^{\cU\frakg}$ of adjoint action. See \eqref{eq:ConvProd} and \eqref{eq:CEdiff} for the precise definitions of $\conv$ and $d_{\CE}^{\cU\frakg}$. 
The following lemma is standard \cite{MR0077480}.

\begin{lemma}[Cartan--Eilenberg]
The map $\tilde{\Phi}_{2}$ is a quasi-isomorphism of dg algebras. In particular, the induced map 
$$
\Phi_{2}: \grHH^{\bullet}(\cU\frakg) \to H_{\CE}^{\bullet}(\frakg,\cU\frakg)
$$ is an isomorphism of graded algebras.
\end{lemma}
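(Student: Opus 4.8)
The plan is to recognize this as the classical Cartan--Eilenberg identification and to supply the homological backbone, taking advantage of a simplification special to $\cU\frakg$. Since $\cU\frakg$ is concentrated in degree zero, a graded homomorphism $\Hom^r(\cU\frakg^{\otimes p},\cU\frakg)$ vanishes unless $r=0$; hence in each total degree $n$ only the single bidegree $(p,r)=(n,0)$ contributes, and $\grHHcx^\bullet(\cU\frakg)=\pdHHcx^\bullet(\cU\frakg)$ is literally the ordinary (ungraded) Hochschild cochain complex. Thus the sum/product distinction that motivates this paper is \emph{vacuous} here, and the statement reduces to the classical isomorphism $\HH^\bullet(\cU\frakg,\cU\frakg)\cong H_{\CE}^\bullet(\frakg,\cU\frakg)$. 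I would set this up as follows: $\HH^\bullet(\cU\frakg,\cU\frakg)\cong\Ext^\bullet_{\cU\frakg^e}(\cU\frakg,\cU\frakg)$ with $\cU\frakg^e=\cU\frakg\otimes\op{(\cU\frakg)}$, computed by the normalized bar resolution $B_\bullet$; while $H_{\CE}^\bullet(\frakg,\cU\frakg)\cong\Ext^\bullet_{\cU\frakg}(\KK,\cU\frakg)$ (adjoint coefficients) is computed by the Chevalley--Eilenberg--Koszul resolution $K_\bullet=\cU\frakg\otimes\Lambda^\bullet\frakg\otimes\cU\frakg$, the two being linked by the standard reduction $\Ext^\bullet_{\cU\frakg^e}(\cU\frakg,\argument)\cong\Ext^\bullet_{\cU\frakg}(\KK,(\argument)_{\mathrm{ad}})$. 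Under $\Lambda^\bullet\frakg\cong S^\bullet(\frakg[1])$, the antisymmetrization comparison map $K_\bullet\to B_\bullet$, which sends $x_1\wedge\cdots\wedge x_p$ to $\sum_{\sigma\in S_p}\sgn(\sigma)\,\suspend x_{\sigma(1)}\otimes\cdots\otimes\suspend x_{\sigma(p)}$, induces on $\Hom_{\cU\frakg^e}(\argument,\cU\frakg)$ exactly the cochain map $\tilde\Phi_2$.

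With this identification in place, I would first verify directly from the explicit convolution product $\conv$ and the adjoint Chevalley--Eilenberg differential that $\tilde\Phi_2$ is a cochain map. The crucial phenomenon is that when the Hochschild differential is antisymmetrized, the terms arising from the multiplication of two adjacent arguments $f(\cdots,\suspend x_i\cdot\suspend x_{i+1},\cdots)$ survive only through their skew part $\tfrac12 f(\cdots,[\suspend x_i,\suspend x_{i+1}]_\frakg,\cdots)$, so that the associative product of $\cU\frakg$ collapses onto the Lie bracket and reconstructs the adjoint Chevalley--Eilenberg differential; the outer multiplication terms match the adjoint action $x\cdot u=xu-ux$.

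Next I would check multiplicativity, that $\tilde\Phi_2$ carries the cup product to the convolution product $\conv$. This is a shuffle computation: the cocommutative coproduct of $S(\frakg[1])$ splits an argument $x_1\odot\cdots\odot x_n$ into all signed $(p,q)$-shuffles, and pairing these against the antisymmetrization of $f\cup g$ shows $\tilde\Phi_2(f\cup g)=\tilde\Phi_2(f)\conv\tilde\Phi_2(g)$ on the nose (one can already confirm the sign bookkeeping in the base cases $p=q=1$). Together with the previous step this makes $\tilde\Phi_2$ a genuine morphism of dg algebras, so that $\Phi_2$ will automatically be a homomorphism of graded algebras once it is shown to be bijective.

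The main obstacle is the quasi-isomorphism itself, which I would obtain from the comparison theorem of homological algebra: $B_\bullet$ and $K_\bullet$ are both complexes of free $\cU\frakg^e$-modules resolving $\cU\frakg$, so any comparison map between them is a homotopy equivalence and induces mutually inverse isomorphisms on $\Ext$; since $\tilde\Phi_2$ is induced by such a comparison map, it is a quasi-isomorphism. The single non-formal ingredient is the acyclicity of the Koszul resolution $\cU\frakg\otimes\Lambda^\bullet\frakg\otimes\cU\frakg$, which is a form of the Poincar\'e--Birkhoff--Witt theorem; I would deduce it by inducing up the one-sided free resolution $\cU\frakg\otimes S(\frakg[1])$ of $\KK$ already recorded in this section, or, if a self-contained argument is preferred, by filtering both complexes by PBW degree and comparing the associated-graded maps, where the problem degenerates to the classical Hochschild--Kostant--Rosenberg quasi-isomorphism for the polynomial algebra $S\frakg$.
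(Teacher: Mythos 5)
Your proposal is correct and is essentially the argument the paper relies on: the paper offers no proof of this lemma, citing it as standard from Cartan--Eilenberg, and your reconstruction --- reducing to the ungraded case (valid since $\cU\frakg$ is concentrated in degree zero, so the sum/product distinction is vacuous and every cochain has $r=0$), identifying both sides as $\Ext$ over $\cU\frakg^e$ and $\cU\frakg$ respectively, and comparing the bar and Koszul resolutions via the antisymmetrization chain map, with acyclicity of the Koszul resolution supplied by PBW --- is precisely that classical argument. The direct verifications that $\tilde\Phi_2$ is a cochain map (associative products collapsing to brackets under antisymmetrization) and multiplicative (the shuffle decomposition of $\Delta_S$ matching the unique factorization of permutations through $(p,q)$-shuffles) are also sound and complete the dg-algebra claim.
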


Therefore, the map
\[\Phi_{D}:=\Phi_{2}\circ \Phi_{1}:\hcoh^{\bullet}( \Dpoly(\frakg[1]), d_D ) \cong \grHH^{\bullet}(S(\frakg[1])\dual) \rightarrow  H_{\CE}^\bullet(\frakg, \cU\frakg ),\]
is an isomorphism of graded algebras.

\subsection{Todd class and Duflo element}\label{sec:ToddDuflo}

The Todd class of a dg manifold $(\cM,Q)$ can be defined via the Atiyah class which measures the obstruction to the existence of $Q$-invariant connections. 
In the following, we present an Atiyah cocycle and a Todd cocycle of the dg manifold $(\frakg[1], d_\frakg)$, and we compare this Todd cocycle with the Duflo element of $\frakg$.  
We refer the reader to \cite{MR3319134} for a general introduction to the Atiyah and Todd classes of a dg manifold.

Let $\nabla^0$ be the trivial connection on $\frakg[1]$. The \emph{Atiyah cocycle} 
$$\at_\frakg \in S(\frakg[1])\dual \otimes (\frakg[1])\dual \otimes \End(\frakg[1]) \cong \Hom(\frakg[1]\otimes \frakg[1], \  S(\frakg[1])\dual \otimes \frakg[1])$$
associated with $\nabla^0$ is characterized by 
$$
\at_\frakg(x,y) = \suspend\inv[\suspend x, \suspend y]_\frakg
$$
for $x,y \in \frakg[1]$. 
Since $\at_\frakg(x,\argument):\frakg[1] \to \frakg[1]$ maps the odd component to the odd component, 
the \emph{Todd cocycle} $\td_{\frakg[1]} \in \prod_{k \geq 0}\big(\Gamma(\Lambda^k T_{\frakg[1]}\dual)\big)^k \cong  \widehat S \frakg\dual$ associated with $\nabla^0$ is 
$$
\td_{\frakg[1]}(x) = \Ber\left(\frac{\at_\frakg(x,\argument)}{1 - e^{-\at_\frakg(x,\argument)}} \right) = \det\left(\frac{1 - e^{-\ad_{\suspend x}}}{\ad_{\suspend x}} \right) = J(\suspend x).
$$
In other words, the Todd cocycle $\td_{\frakg[1]}$ is identified with the \emph{Duflo element} $J \in \widehat S \frakg\dual$ under the isomorphism 
\begin{equation}\label{eq:ToddisoDuflo}
\prod_{k \geq 0}\big(\Gamma(\Lambda^k T_{\frakg[1]}\dual)\big)^k \cong \prod_{k \geq 0}\big(S(\frakg[1])\dual \otimes  \Lambda^k (\frakg[1])\dual\big)^k \cong \prod_{k \geq 0}S^k\frakg\dual  =   \widehat S \frakg\dual.
\end{equation}
A Todd cocycle is closed under the coboundary operator $L_{d_\frakg}$. In our case, this means that the Todd cocycle $\td_{\frakg[1]}$ is $\frakg$-invariant. Thus, the square root $\td_{\frakg[1]}^{1/2}$ is also $\frakg$-invariant and acts on $H_{\CE}^\bullet(\frakg, S\frakg)$ by contraction.

With the isomorphism \eqref{eq:ToddisoDuflo}, one can show that the left half of the diagram \eqref{diag:KDandDuflo} commutes.

\subsection{Hochschild--Kostant--Rosenberg map and Poincar\'e--Birkhoff--Witt isomorphism} 
The last step of proving Theorem~\ref{thm:KDandDuflo} is to show that the two well-known isomorphisms --- \emph{Hochschild--Kostant--Rosenberg isomorphism} and \emph{Poincar\'e--Birkhoff--Witt isomorphism} --- are isomorphic via $\Phi_T$ and $\Phi_D$. We first recall the definitions of these two isomorphisms.

Recall that for $\mathbf{x}\in S^{k}(\frakg[1])$, the interior product 
$\iota_{\mathbf{x}}:S(\frakg[1])\dual  \rightarrow S(\frakg[1])\dual
$ 
is characterized by  
\[ \iota_{\mathbf{x}}({\frak f})= (-1)^{\degree{\mathbf{x}}\cdot \degree{{\frak f}}} \, {\frak f}(\mathbf{x}\odot \argument):S^{n-k}(\frakg[1])\rightarrow \KK, \qquad \forall\, {\frak f}\in \Hom(S^{n}(\frakg[1]),\KK) \cong S^{n}(\frakg[1])\dual , \] 
if $n\geq k$, and $\iota_{\mathbf{x}}({\frak f}) = 0$ if $n< k$.
The \emph{Hochschild--Kostant--Rosenberg map} \cite[Section~4]{MR3754617} on the graded manifold $\frakg[1]$ is the map
\begin{gather*}
\hkr: \Tpoly^{\bullet}(\frakg[1]) \to \Dpoly^{\bullet}(\frakg[1]) \cong \grHHcx^\bullet( S( \frakg[1])\dual ),  \\
\hkr\big({\frak f} \otimes (\suspend x_1 \odot \cdots \odot \suspend x_q)\big) = \frac{1}{q!} \sum_{\sigma \in S_q} {\frak f} \odot \iota_{x_{\sigma(1)}} \otimes  \cdots \otimes \iota_{x_{\sigma(q)}},
\end{gather*}
for ${\frak f}\otimes (\suspend x_{1}\odot \cdots \odot \suspend x_{q})\in S(\frakg[1])\dual \otimes S^{q}\frakg \subset \Tpoly^{\bullet}(\frakg[1])$. 
Here, the Hochschild cochain 
\[{\frak f}\odot \iota_{x_{1}}\otimes \cdots \otimes \iota_{x_{q}}: (S(\frakg[1])\dual)^{\otimes q} \to S(\frakg[1])\dual\]
 is defined by
\[{\frak f}\odot \iota_{x_{1}}\otimes \cdots \otimes \iota_{x_{q}}:(b_{1}\otimes \cdots \otimes b_{q})\mapsto (-1)^{\sum_{i=1}^{q}(q-i)\degree{b_{i}}} \, {\frak f}\odot \iota_{x_{1}}b_{1}\odot \cdots \odot \iota_{x_{q}}b_{q}\]
for $b_1, \cdots, b_q \in S(\frakg[1])\dual$. This map induces an isomorphism $\hkr: \hcoh^\bullet( \Tpoly(\frakg[1]), d_T ) \to   \grHH^\bullet(S( \frakg[1])\dual)$ of vector spaces on their cohomologies.

The \emph{Poincar\'e--Birkhoff--Witt isomorphism} is an isomorphism $\pbw:S\frakg\rightarrow \cU\frakg$, 
\begin{equation}\label{eq:pbwFormula}
\pbw( \suspend x_{1}\odot \cdots \odot \suspend x_{q})=\sum_{\sigma\in S_{n}}\frac{1}{n!}\suspend x_{\sigma(1)}\cdots \suspend x_{\sigma(q)},
\end{equation}
of $\frakg$-modules which induces an isomorphism of the Chevalley--Eilenberg complexes  
\[\pbw:\big(\Hom(S(\frakg[1]), S\frakg), d_{\CE}^{S\frakg}\, \big)  \rightarrow \big( \Hom(S(\frakg[1]), \cU\frakg), d_{\CE}^{\cU\frakg} \, \big).\]

\begin{proposition}\label{prop:HKR=PBW}
The diagram 
$$
\begin{tikzcd}
\hcoh^\bullet( \Tpoly(\frakg[1]), d_T ) \ar[d,"\Phi_{T}"'] \ar[r,"\hkr"] &  \grHH^\bullet(S( \frakg[1])\dual) \ar[d,"\Phi_D = \Phi_2 \circ \Phi_1"] \\
H_{\CE}^\bullet(\frakg, S\frakg) \ar[r,"\pbw"'] & H_{\CE}^\bullet(\frakg, \cU\frakg)
\end{tikzcd}
$$ 
commutes, where $\pbw : H_{\CE}^\bullet(\frakg, S\frakg) \to H_{\CE}^\bullet(\frakg, \cU\frakg)$ is the map induced by \eqref{eq:pbwFormula}.
\end{proposition}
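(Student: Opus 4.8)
The plan is to prove the commutativity at the cochain level: I will reduce the statement about the Keller-triple isomorphism $\Phi_1$ to an explicit coboundary relation inside $\grHHcx^\bullet(A,X,B)$, and then match the resulting $\cU\frakg$-cochain with the Poincar\'e--Birkhoff--Witt image under the Cartan--Eilenberg quasi-isomorphism $\tilde{\Phi}_{2}$. The first step is to extract a workable description of $\Phi_1$ from the proof of Lemma~\ref{lem:ProjQiso}. A class $F\in\grHH^\bullet(X)$ is represented by a triple $F=f_A+f_X+f_B$ with $f_A\in\grHHcx^\bullet(A)$, $f_X\in\grHHcx^\bullet(A,X,B)$, $f_B\in\grHHcx^\bullet(B)$, and the cocycle equation $(\hochschild+\partial)F=0$ decouples into $(\hochschild^A+\partial_A)f_A=0$, $(\hochschild^B+\partial_B)f_B=0$, together with the coupling identity
\[
({\hochschild^X}_L+{\hochschild^X}_R+\partial_X)\,f_X = -\,\hochschild^{AX}(f_A)-\hochschild^{XB}(f_B)
\]
in $\grHHcx^\bullet(A,X,B)$. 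Since $\pi_A F=f_A$ and $\pi_B F=f_B$, this shows that $\Phi_1([f_B])=[f_A]$ exactly when such a bridging cochain $f_X$ exists. Using the identifications $\hochschild^{AX}=\Phi\circ{\rho_A}_\ast$ (established in Lemma~\ref{lem:ProjQiso}) and $\hochschild^{XB}=\Psi\circ{\rho_B}_\ast$ (by the symmetric computation), the task becomes: given $f_B=\hkr(P)$, produce a $\cU\frakg$-cocycle $f_A$ and a cochain $f_X$ so that $\Phi\circ{\rho_A}_\ast(f_A)+\Psi\circ{\rho_B}_\ast(\hkr(P))$ equals the coboundary $-({\hochschild^X}_L+{\hochschild^X}_R+\partial_X)f_X$.

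Next I would fix a homogeneous polyvector field $P=\mathfrak f\otimes(\suspend x_1\odot\cdots\odot\suspend x_q)$ with $\mathfrak f\in S^m(\frakg[1])\dual$ and compute the two ``corner'' cochains explicitly. On the $B$-side, $\Psi\circ{\rho_B}_\ast(\hkr(P))$ is supported in $\Hom(X\otimes B^{\otimes q},X)$ and is given by contracting the $S(\frakg[1])$-leg of $X=\cU\frakg\otimes S(\frakg[1])$ against $\hkr(P)(b_1,\dots,b_q)\in S(\frakg[1])\dual$ through $\Sgvaction$. On the $A$-side, for the natural candidate $f_A$ — a Hochschild $m$-cochain of $\cU\frakg$ obtained by symmetrizing $\pbw\circ\tilde{\Phi}_{T}(P)$ — the cochain $\Phi\circ{\rho_A}_\ast(f_A)$ is supported in $\Hom(A^{\otimes m}\otimes X,X)$ and is given by left multiplication of $f_A(a_1,\dots,a_m)$ on the $\cU\frakg$-leg of $X$. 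These two corner cochains live in different summands of $\grHHcx^\bullet(A,X,B)$ (pure $B$-inputs versus pure $A$-inputs), so they cannot agree on the nose; the entire content lies in interpolating between them by a staircase cochain $f_X$ with mixed $A$- and $B$-inputs. I would build $f_X$ from the classical comparison between the bar resolution of $\cU\frakg$ and the Koszul/Chevalley--Eilenberg resolution $X=\cU\frakg\otimes S(\frakg[1])$, inserting the interior products $\iota_{x_i}$ and the PBW symmetrization so that its two ends reproduce the corner cochains above, and then verify the coupling identity by direct computation using Lemma~\ref{lem:TopForm} and the explicit formulas for ${\hochschild^X}_L$, ${\hochschild^X}_R$, $\partial_X$.

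The main obstacle is precisely this construction and its verification: one must organize $f_X$ so that its ${\hochschild^X}_L$-part peels off the $A$-inputs matching $\Phi\circ{\rho_A}_\ast(f_A)$ while its ${\hochschild^X}_R$-part peels off the $B$-inputs matching $\Psi\circ{\rho_B}_\ast(\hkr(P))$, with all the intermediate mixed terms cancelling against $\partial_X$. This bookkeeping — tracking the Koszul signs across the staircase and confirming the exact cancellation — is where the computation is delicate. Once the coupling identity is in place, it follows that $\Phi_1([\hkr(P)])=[f_A]$, and a short final check that $\tilde{\Phi}_{2}(f_A)$ is cohomologous to $\pbw\circ\tilde{\Phi}_{T}(P)$ (which reduces to comparing the two symmetrization formulas \eqref{eq:HH(Ug)isoHce(Ug)} and \eqref{eq:pbwFormula}) yields
\[
\Phi_D([\hkr(P)])=\Phi_2([f_A])=[\tilde{\Phi}_{2}(f_A)]=[\pbw\circ\tilde{\Phi}_{T}(P)]=\pbw(\Phi_T(P)).
\]
Because every arrow in the square is a cochain map or is induced by a quasi-isomorphism, and the whole construction is natural in $P$, it suffices to verify this identity on the homogeneous generators $P=\mathfrak f\otimes(\suspend x_1\odot\cdots\odot\suspend x_q)$, completing the proof of the proposition.
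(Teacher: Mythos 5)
Your opening reduction is correct and is, in essence, the same mechanism the paper exploits: for a cocycle $F = f_A + f_X + f_B \in \grHHcx^\bullet(X)$ the differential decouples exactly as you write, the identity $\hochschild^{AX} = \Phi\circ{\rho_A}_\ast$ is indeed established in the proof of Lemma~\ref{lem:ProjQiso}, and your symmetric identity $\hochschild^{XB} = \Psi\circ{\rho_B}_\ast$ does check out against the paper's sign conventions; consequently $\Phi_1([f_B]) = [f_A]$ whenever a bridging cochain $f_X$ satisfying your coupling identity exists. The genuine gap is that nothing after this reformulation is actually carried out. Neither the candidate $f_A$ nor the staircase cochain $f_X$ is ever defined. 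In particular, ``symmetrizing $\pbw\circ\tilde{\Phi}_T(P)$'' does not by itself produce a Hochschild cochain of $\cU\frakg$: the element $\pbw\circ\tilde{\Phi}_T(P)$ lies in $\Hom(S^m(\frakg[1]),\cU\frakg)$, and passing to an element of $\Hom(\cU\frakg^{\otimes m},\cU\frakg)$ requires extending from antisymmetrized $\frakg$-inputs to arbitrary elements of $\cU\frakg$, i.e.\ a cochain-level quasi-inverse of the Cartan--Eilenberg map $\tilde{\Phi}_2$ of \eqref{eq:HH(Ug)isoHce(Ug)}; this exists only after choices and is itself a nontrivial construction. The interpolating $f_X$ is harder still, and you explicitly defer it (``the main obstacle is precisely this construction and its verification''). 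But that construction, together with the sign bookkeeping, \emph{is} the content of the proposition; what you have written is a plan whose hardest step is left open.

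There is also a logical slip in your final reduction: a homogeneous generator $P = \mathfrak{f}\otimes(\suspend x_1\odot\cdots\odot\suspend x_q)$ is in general not a $d_T$-cocycle, so $\Phi_1([\hkr(P)])$ is meaningless for such $P$, and commutativity cannot be ``verified on generators'' class by class. To make a pointwise argument rigorous you would need the assignment $P \mapsto \big(f_A(P), f_X(P)\big)$ to be linear and compatible with all the differentials --- that is, a genuine chain map lifting $\hkr$ through $\pi_B$ --- which is a stronger statement than anything you argue. This is exactly the difficulty the paper circumvents: since $({\pi_B}_\ast)\inv$ admits no obvious cochain representative, the paper forms the pullback complex $\hkr^\ast\grHHcx^\bullet(\cU\frakg\otimes S(\frakg[1]))$, on which both $\psi_1 = \tilde{\Phi}_2\circ\pi_A\circ\widetilde\hkr$ and $\psi_2 = \pbw\circ\tilde{\Phi}_T\circ\tilde{\pi}_B$ are honest cochain maps, and then writes down an explicit homotopy operator $h = \sum h_{p,q,r}$ (built from the antisymmetrization $\ssym$, the insertion maps $\HmtpB{q,r}$ and $\HmtpC$, and the sign $\sgn(p,q,r) = (-1)^{qr+r+q(q+1)/2}$), with values in $\Hom(S(\frakg[1]),\cU\frakg)$, satisfying $\psi_1 - \psi_2 = h\circ D + d_{\CE}^{\cU\frakg}\circ h$ as in \eqref{eq:HptHKRPBW}; this identity is then verified separately on the $A$-, $B$-, and $X$-components. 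Until you either carry out your staircase construction in full, or produce such a homotopy, the proposition remains unproved.
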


The rest of the section is devoted to proving Proposition~\ref{prop:HKR=PBW} which is the last step of proving Theorem~\ref{thm:KDandDuflo}.

According to Theorem~\ref{thm:d.s.KellerThm}, the isomorphism $ \Phi_1={\pi_A}_\ast  \circ ({\pi_B}_\ast)\inv :\grHH^\bullet(S( \frakg[1])\dual) \xto\cong \grHH^\bullet(\cU \frakg)$ is induced by the surjective quasi-isomorphisms
$$
\begin{tikzcd}
\grHHcx^\bullet(\cU\frakg) & \ar[l,two heads,"\pi_A"'] \grHHcx^\bullet(\cU\frakg \otimes S( \frakg[1]))  \ar[r,two heads,"\pi_B"] & \grHHcx^\bullet(S( \frakg[1])\dual).
\end{tikzcd}
$$
Note that the inclusion $\iota_B:\grHHcx^\bullet(S( \frakg[1])\dual) \into \grHHcx^\bullet(\cU\frakg \otimes S( \frakg[1]))$ is \emph{not} a cochain map, and there is no obvious representation of $({\pi_B}_\ast)\inv$ on the cochain level. 
Thus, it is difficult to verify $\pbw \circ \Phi_T = \Phi_D \circ \hkr= \Phi_2 \circ {\pi_A}_\ast  \circ ({\pi_B}_\ast)\inv \circ \hkr$ directly.  
We solve this issue by lifting $\pi_B$ and $\hkr$ to a pullback complex.

\subsubsection*{Pullback complex}

Let $\hkr^\ast\grHHcx^\bullet(\cU\frakg \otimes S( \frakg[1]))$ be the pullback complex 
$$
\begin{tikzcd}
\hkr^\ast\grHHcx^\bullet(\cU\frakg \otimes S( \frakg[1]))  \ar[r,dashed,"\widetilde\hkr"] \ar[d,dashed,"\tilde\pi_B"'] & \grHHcx^\bullet(\cU\frakg \otimes S( \frakg[1])) \ar[d,"\pi_B"]  \\
\Tpoly^{\bullet}(\frakg[1]) \ar[r,"\hkr"']  &  \grHHcx^\bullet(S( \frakg[1])\dual).
\end{tikzcd}
$$
More precisely, 
\begin{equation*}
\hkr^\ast\grHHcx^\bullet(\cU\frakg \otimes S( \frakg[1])) = \grHHcx^\bullet(\cU\frakg) \oplus \grHHcx^\bullet(\cU\frakg,\cU\frakg \otimes S( \frakg[1]), S( \frakg[1])\dual) \oplus \Tpoly^{\bullet}(\frakg[1])
\end{equation*}
together with the differential 
$$
D = \hochschild^{A} + \hochschild^{AX} + \hochschild^X +\partial_X + (\hochschild^{XB} \circ \hkr) + \,  \schouten{d_\frakg}{\argument}.
$$

Since $\pi_B: \grHHcx^\bullet(\cU\frakg \otimes S( \frakg[1]))  \to \grHHcx^\bullet(S( \frakg[1])\dual)$ is a surjective quasi-isomorphism, the kernel $\ker(\pi_B)$ is acyclic. Also note that $\ker(\pi_{B})=\ker(\tilde\pi_{B})$ by construction. Thus, the projection
$$
\tilde\pi_B: \hkr^\ast\grHHcx^\bullet(\cU\frakg \otimes S( \frakg[1])) \to \Tpoly^{\bullet}(\frakg[1])
$$
is also a surjective quasi-isomorphism. Therefore, to prove Proposition~\ref{prop:HKR=PBW}, it suffices to show that the diagram
\begin{equation*}
\begin{tikzcd}
\hkr^\ast\grHHcx^\bullet(\cU\frakg \otimes S( \frakg[1]))  \ar[r,"\widetilde\hkr"] \ar[d,"\tilde\Phi_{T}\circ \tilde\pi_B"'] & \grHHcx^\bullet(\cU\frakg \otimes S( \frakg[1])) \ar[r,"\pi_A"] &  \grHHcx^\bullet(\cU\frakg) \ar[d,"\tilde\Phi_{2}"]     \\
\Hom(S( \frakg[1]), S\frakg) \ar[rr,"\pbw"']  &   & \Hom(S( \frakg[1]), \cU\frakg)
\end{tikzcd}
\end{equation*}
commutes up to homotopy. We prove it by constructing an explicit homotopy operator.

\subsubsection*{Homotopy operator} 
Let
\begin{align*} 
 \psi_{1} &= \tilde\Phi_{2}\circ \pi_{A}\circ \widetilde\hkr, \\
 \psi_{2} &= \pbw \circ \tilde{\Phi}_{T}\circ \tilde{\pi}_{B}.
 \end{align*}
We will construct a homotopy operator 
$h:\hkr^\ast\grHHcx^\bullet(\cU\frakg \otimes S( \frakg[1])) \rightarrow \Hom(S(\frakg[1]),\cU\frakg)$
satisfying the equation
\begin{equation}\label{eq:HptHKRPBW}
\psi_{1} - \psi_{2} = h\circ D + d_{\CE}^{\cU\frakg} \circ h.
\end{equation}

\paragraph{Notations}

In the rest of this section, we denote $A=\cU\frakg$, $B=S(\frakg[1])\dual$ and $X=\cU\frakg \otimes S(\frakg[1])$. A basis of $\frakg[1]$ is denoted by $\{e_{1},\cdots, e_{d}\}$, and its dual basis is denoted by $\{\epsilon^{1},\cdots, \epsilon^{d}\}$. The symbol $x_{i}$ is an element of $\frakg[1]$ for each $i$, $\suspend:\frakg[1]\rightarrow \frakg$ is the degree-shifting map, and $\action$ denotes the adjoint action.  
We need the technical maps 
\begin{enumerate}
\item[(i)] $\HmtpA:S^{p}(\frakg[1]) \to \cU\frakg^{\otimes p}$,
\[\HmtpA(x_{1}\odot \cdots \odot x_{p}) = \sum_{\sigma \in S_{p}} (-1)^{\sigma}\suspend x_{\sigma(1)}\otimes \cdots \otimes \suspend x_{\sigma(p)},\]
\item[(ii)] $\HmtpB{q,r}: S^{q+r}(\frakg[1])\to \cU\frakg \otimes S^{q+r}(\frakg[1]) \otimes (\frakg[1]\dual)^{\otimes q} \otimes \frakg[1]^{\otimes q}$,
\[\HmtpB{q,r}(\mathbf{x})=1\otimes \mathbf{x}\otimes \sum_{i_{1},\cdots,i_{q}}\big((\epsilon^{i_{1}}\otimes \cdots \otimes \epsilon^{i_{q}})\otimes (e_{i_{q}}\otimes \cdots \otimes e_{i_{1}})\big),\]
for $\mathbf{x}\in S^{q+r}(\frakg[1])$,
\item[(iii)] $\HmtpC: \frakg[1]^{\otimes q}\to \cU\frakg$,
\[\HmtpC(x_{1}\otimes \cdots \otimes x_{q})=\suspend x_{1} \cdots \suspend x_{q},\]
\item[(iv)]
the projection $\pr_{\cU\frakg}: \cU\frakg \otimes S(\frakg[1]) \onto \cU\frakg \otimes S^0(\frakg[1]) \cong \cU\frakg$.
\end{enumerate}

For ${\bf x}= x_1 \odot \cdots \odot x_n \in S^n(\frakg[1])$, we denote ${\bf x}^{\{i\}}:= x_1 \odot \cdots \widehat{x_i} \cdots \odot x_n$. 
Following Sweedler notation, we write the formula of the comultiplication $\Delta:S(\frakg[1])\rightarrow S(\frakg[1])\otimes S(\frakg[1])$ as 
\[\Delta(\mathbf{x})=\sum_{k}\mathbf{x}_{(1),k}\otimes \mathbf{x}_{(2),n-k}=\mathbf{x}_{(1),k}\otimes \mathbf{x}_{(2),n-k},\]
where $\mathbf{x}_{(1),k}\in S^{k}(\frakg[1])$ and $\mathbf{x}_{(2),n-k}\in S^{n-k}(\frakg[1])$.

For $f \in \Hom^{r}(A^{\otimes p}\otimes X \otimes B^{\otimes q},X)$, we define 
\begin{gather*}
h_{p,q,r}:\Hom^{r}(A^{\otimes p}\otimes X \otimes B^{\otimes q},X) \rightarrow \Hom(S^{p+q+r}(\frakg[1]),\cU\frakg)\\
h_{p,q,r}(f)=\sgn(p,q,r) \ \mu_{\cU\frakg} \circ (\pr_{\cU\frakg}\otimes \HmtpC)\circ (f\otimes 1) \circ (\HmtpA \otimes \HmtpB{q,r})\circ \Delta_{p,q+r}
\end{gather*}
where $\sgn(p,q,r)=(-1)^{qr+r+q(q+1)/2}$. Using Sweedler notation, one can write
\[h_{p,q,r} (f)(\bold x)=\sgn(p,q,r) \sum_{i_1,\cdots, i_{q}} \pr_{\cU\frakg}\circ f \Big( \HmtpA(\bold x_{(1),p})\otimes (1\otimes \bold x_{(2),q+r})\otimes (\epsilon^{i_{1}}\otimes \cdots \otimes \epsilon^{i_{q}}) \Big) \cdot (\suspend e_{i_{q}}\cdots \suspend  e_{i_{1}})\]
for $\bold x \in S^{p+q+r}(\frakg[1])$.
Now, we define the homotopy operator
\begin{equation*}
h:\hkr^\ast\grHHcx^\bullet(\cU\frakg \otimes S( \frakg[1])) \rightarrow \Hom(S(\frakg[1]), \cU\frakg)
\end{equation*}
to be the operator extending $\sum h_{p,q,r}$ by zero, i.e.\ the operator $h$ is equal to the composition
\begin{equation*}
\begin{tikzcd}
\hkr^\ast\grHHcx^\bullet(\cU\frakg \otimes S( \frakg[1])) \arrow[r, "\pr"]& 
\grHHcx(\cU\frakg, \cU\frakg \otimes S(\frakg[1]), S(\frakg[1])\dual) \arrow[r, "\sum h_{p,q,r}"] &\Hom(S(\frakg[1]), \cU\frakg).
\end{tikzcd}
\end{equation*}

\paragraph{$A$-side} 
Since $A = \cU\frakg$ is concentrated at degree zero, we have 
\[(\hochschild^{AX}f_{A})(a_{1}\otimes \cdots \otimes a_{p}\otimes x)=f_{A}(a_{1}\otimes \cdots \otimes a_{p})\cdot x\]
for  $f_{A}\in \Hom(\cU\frakg^{\otimes p},\cU\frakg) \subset\grHHcx^{\bullet}(X)$, $x \in X$ and $a_{i}\in \cU\frakg$. 
Thus, for ${\bf x} = x_1 \odot \cdots \odot x_p  \in S(\frakg[1])$,
\begin{align*}
h (\hochschild^{AX}f_{A}) (\bold x) &= \sgn(p,0,0) \ \pr_{\cU\frakg}\circ (\hochschild^{AX}f_{A})(\HmtpA(\bold x)\otimes (1\otimes 1))\\
&=  \sum_{\sigma\in S_{p}}(-1)^{\sigma} f_{A}(\suspend x_{\sigma(1)}\otimes \cdots \otimes \suspend x_{\sigma(p)}).
\end{align*} 
Furthermore, since $\psi_{2}(f_{A})=\pbw \circ \widetilde{\Phi}_{T}\circ \tilde{\pi}_{B}(f_{A})=0$ 
and 
\[\psi_{1}(f_{A})(\bold x) = (\tilde\Phi_{2}\circ \pi_{A}\circ \widetilde\hkr)(f_{A}) (\bold x) =  \sum_{\sigma \in S_p}(-1)^{\sigma} f_{A}(\suspend x_{\sigma(1)}\otimes \cdots \otimes \suspend x_{\sigma(p)}),\]
we have
\[(h\circ D + d_{\CE}^{\cU\frakg} \circ  h)(f_{A})=h (\hochschild^{AX}f_{A})=(\psi_{1} - \psi_2)(f_{A}).\]

\paragraph{$B$-side} 
For $f_{B}={\frak f} \otimes \suspend e_{j_1}\odot \cdots \odot \suspend e_{j_q} \in S^{q+r}(\frakg[1])\dual \otimes S^{q}\frakg \subset \Tpoly(\frakg[1])$ and $\bold x \in S^{q+r}(\frakg[1])$, we have 
\begin{align*}
& h( \hochschild^{XB}\circ \hkr (f_{B}))(\bold x)\\
& \quad =\sgn(0,q,r)\sum_{i_1,\cdots, i_{q}} \pr_{\cU\frakg}\circ  (\hochschild^{XB}\circ \hkr(f_{B})) \Big( (1\otimes \bold x)\otimes (\epsilon^{i_{1}}\otimes \cdots \otimes \epsilon^{i_{q}}) \Big) \cdot (\suspend e_{i_{q}} \cdots \suspend e_{i_{1}})\\
&\quad =\sgn(0,q,r) (-1)^{q+r-1+r(q+r)}\sum_{i_1,\cdots, i_{q}}\pr_{\cU\frakg}\Big((1\otimes \bold x)\Sgvaction \hkr (f_{B})(\epsilon^{i_{1}}\otimes \cdots \otimes \epsilon^{i_{q}}) \Big) \cdot (\suspend e_{i_{q}}\cdots \suspend e_{i_{1}})\\
&\quad =\sgn(0,q,r) (-1)^{q+r-1+r(q+r)+q(q+1)/2} (\bold x \Sgvaction {\frak f}) \,  \frac{1}{q!} \sum_{\sigma\in S_{q}} \suspend e_{j_{\sigma(q)}}\cdots \suspend e_{j_{\sigma(1)}}\\
&\quad =\sgn(0,q,r) (-1)^{qr+q-1+q(q+1)/2} (-1)^{q+r} {\frak f}(\bold x) \cdot \pbw(\suspend e_{j_1}\odot \cdots \odot \suspend e_{j_q}) \\
&\quad = - {\frak f}(\bold x) \cdot \pbw(\suspend e_{j_1}\odot \cdots \odot \suspend e_{j_q}).
\end{align*}
Since $\psi_{1}(f_{B}) =\tilde\Phi_{2}\circ \pi_{A}\circ \widetilde\hkr(f_{B})= 0$ and 
$$
\psi_{2}(f_{B})=\pbw\circ \tilde{\Phi}_{T}({\frak f} \otimes \suspend e_{1}\odot \cdots \odot \suspend e_{q}): \bold x \mapsto {\frak f}(\bold x) \cdot \pbw(\suspend e_{1}\odot \cdots \odot \suspend e_{q}),
$$ 
we conclude that 
\[(h\circ D+ d_{\CE}^{\cU\frakg} \circ  h )(f_{B})=h\circ \hochschild^{XB}\circ \hkr(f_{B})=(\psi_1-\psi_{2})(f_{B}).\]

\paragraph{$X$-side} 
Let $f\in \Hom^{r}(A^{\otimes p}\otimes X \otimes B^{\otimes q},X)$ and $\bold x = x_{1}\odot \cdots \odot x_{p+q+r+1} \in S^{p+q+r+1}(\frakg[1])$. To verify the homotopy equation, we need to compute the following 4 terms.

{\bf Term I:} 
\begin{align*}
d_{\CE}^{\cU\frakg}(hf)(\bold x)&=\left( \sum_{i=1}^{p+q+r+1} (-1)^{i+p+q+r}(\suspend x_{i})\action(hf)(\bold x^{\{i\}})\right) -(-1)^{p+q+r}(hf)(\partial_\frakg \bold x)\\
&=\left( \sum_{i=1}^{p+q+r+1}(-1)^{i+p+q+r}\sgn(p,q,r) (\mathfrak{D}^{i}_{1}-\mathfrak{D}^{i}_{2})\right)  -(-1)^{p+q+r}\sgn(p,q,r)(\mathfrak{F}_{1}+\mathfrak{F}_{2}),
\end{align*}
where
\begin{align*}
\mathfrak{D}^{i}_{1} &=\sum_{i_1,\cdots, i_{q}} \suspend x_{i}\cdot \left(\pr_{\cU\frakg}\circ f \Big( \HmtpA(\bold x_{(1),p}^{\{i\}})\otimes (1\otimes \bold x_{(2),q+r}^{\{i\}})\otimes (\epsilon^{i_{1}}\otimes \cdots \otimes \epsilon^{i_{q}}) \Big)\right) \cdot (\suspend  e_{i_{q}}\cdots \suspend  e_{i_{1}})\\
 \mathfrak{D}^{i}_{2} &=\sum_{i_1,\cdots, i_{q}} \left(\pr_{\cU\frakg}\circ f \Big( \HmtpA(\bold x_{(1),p}^{\{i\}})\otimes (1\otimes \bold x_{(2),q+r}^{\{i\}})\otimes (\epsilon^{i_{1}}\otimes \cdots \otimes \epsilon^{i_{q}}) \Big)\right) \cdot (\suspend  e_{i_{q}}\cdots \suspend  e_{i_{1}})\cdot \suspend x_{i} \\
 \mathfrak{F}_{1} &=\sum_{i_1,\cdots, i_{q}}\pr_{\cU\frakg}\circ f \Big(\HmtpA(\partial_\frakg\bold x_{(1),p+1})\otimes (1\otimes \bold x_{(2),q+r})\otimes (\epsilon^{i_{1}}\otimes \cdots \otimes \epsilon^{i_{q}}) \Big)\cdot (\suspend  e_{i_{q}}\cdots \suspend  e_{i_{1}}) \\
\mathfrak{F}_{2}& =(-1)^{p}\sum_{i_1,\cdots, i_{q}}\pr_{\cU\frakg}\circ f\Big(\HmtpA(\bold x_{(1),p})\otimes (1\otimes \partial_\frakg\bold x_{(2),q+r+1})\otimes (\epsilon^{i_{1}}\otimes \cdots \otimes \epsilon^{i_{q}}) \Big)\cdot (\suspend  e_{i_{q}}\cdots \suspend  e_{i_{1}}).
\end{align*}
{\bf Term II:}
\begin{multline*}
h({\hochschild^{X}}_{L}f)(\bold x)
=\sgn(p+1,q,r) (-1)^{p+q+r} \cdot \\
 \left \{ \left( \sum_{i=1}^{p+q+r+1}(-1)^{i+1} \mathfrak{A}^{i}_{1} \right) + \mathfrak{A_{\text{mid}}}+\left( (-1)^{p+1}\sum_{i=1}^{p+q+r+1}(-1)^{i+1+p} \mathfrak{A}^{i}_{2}\right)\right\},
\end{multline*}
where 
\begin{align*}
\mathfrak{A}^{i}_{1} &=\sum_{i_1,\cdots, i_{q}} \suspend x_{i}\cdot \left(\pr_{\cU\frakg}\circ f \Big( \HmtpA(\bold x_{(1),p}^{\{i\}})\otimes (1\otimes \bold x_{(2),q+r}^{\{i\}})\otimes (\epsilon^{i_{1}}\otimes \cdots \otimes \epsilon^{i_{q}}) \Big)\right) \cdot (\suspend  e_{i_{q}}\cdots \suspend  e_{i_{1}})\\
\mathfrak{A}^{i}_{2}&=\sum_{i_1,\cdots, i_{q}}\pr_{\cU\frakg}\circ f \Big( \HmtpA(\bold x_{(1),p}^{\{i\}})\otimes ( \suspend x_{i}\otimes \bold x_{(2),q+r}^{\{i\}})\otimes (\epsilon^{i_{1}}\otimes \cdots \otimes \epsilon^{i_{q}}) \Big) \cdot (\suspend  e_{i_{q}}\cdots \suspend  e_{i_{1}})\\
\mathfrak{A}_{\text{mid}} &= \sum_{i_1,\cdots, i_{q}}\sum_{j=1}^{p}(-1)^{j} \cdot \\
& \qquad \quad \pr_{\cU\frakg}\circ f \Big( \left(  \mu _{j} \circ \HmtpA(\bold x_{(1),p+1}) \right) \otimes (1\otimes \bold x_{(2),q+r})\otimes (\epsilon^{i_{1}}\otimes \cdots \otimes \epsilon^{i_{q}}) \Big) \cdot (\suspend  e_{i_{q}}\cdots \suspend  e_{i_{1}}),
\end{align*}
and the map $\mu _{j}:\cU\frakg^{\otimes p+1} \rightarrow \cU\frakg^{\otimes p}$, $j=1,\cdots, p$, is defined by
\[\mu _{j}(u_{1}\otimes \cdots \otimes u_{p+1})=u_{1}\otimes \cdots \otimes u_{j}u_{j+1}\otimes \cdots \otimes u_{p+1}.\]
{\bf Term III:} 
\begin{align*}
&h({\hochschild^{X}}_{R}f)(\bold x) =\sgn(p,q+1,r)(-1)^{p+q+r} (\mathfrak{B}_{1}+\mathfrak{B}_{2}+\mathfrak{B}_{3}),
\end{align*}
where
\begin{align*}
\mathfrak{B}_{1} 
&=-\sum_{ i_1,\cdots, i_{q}}\sum_{i=1}^{p+q+r+1}(-1)^{q+r-i} \cdot \\
& \quad \qquad  \pr_{\cU\frakg}\circ f \Big( \HmtpA(\bold x^{\{i\}}_{(1),p})\otimes (1\otimes \bold x^{\{i\}}_{(2),q+r})\otimes (\epsilon^{i_{1}}\otimes \cdots \otimes \epsilon^{i_{q}}) \Big) \cdot (\suspend  e_{i_{q}}\cdots \suspend  e_{i_{1}}\suspend x_{i})
\\
\mathfrak{B}_{2}& =
\sum_{ i_1,\cdots, i_{q+1}}\sum_{k=1}^{q}(-1)^{p+1+k} \cdot \\
& \quad \qquad   \pr_{\cU\frakg}\circ f \Big( \HmtpA(\bold x_{(1),p})\otimes (1\otimes \bold x_{(2),q+r+1})\otimes (\epsilon^{i_{1}}\otimes \cdots \epsilon^{i_{k}}\odot \epsilon^{i_{k+1}} \cdots \otimes \epsilon^{i_{q+1}}) \Big) \cdot (\suspend  e_{i_{q+1}}\cdots \suspend  e_{i_{1}})
 \\
\mathfrak{B}_{3} & =\sum_{ i_1,\cdots, i_{q},j}(-1)^{p+q}  \cdot \\
& \quad \qquad   \pr_{\cU\frakg} \left \{ f \Big( \HmtpA(\bold x_{(1),p})\otimes (1\otimes \bold x_{(2),q+r+1})\otimes (\epsilon^{i_{1}}\otimes \cdots \otimes \epsilon^{i_{q}}) \Big)\Sgvaction \epsilon^{j}\right \} \cdot (\suspend  e_{j}\suspend  e_{i_{q}}\cdots \suspend  e_{i_{1}})
\end{align*}
{\bf Term IV:} 
\begin{align*}
h(\partial_{X}f)(\bold x)
&=\sgn(p,q,r+1) (\mathfrak{I}_{1}-(-1)^{r}(\mathfrak{I}_{2}+\mathfrak{I}_{3})),
\end{align*}
where
\begin{align*}
\mathfrak{I}_{1} & =\sum_{i_1,\cdots, i_{q}} \pr_{\cU\frakg}\circ d_{X}\circ f \Big( \HmtpA(\bold x_{(1),p})\otimes (1\otimes \bold x_{(2),q+r+1})\otimes (\epsilon^{i_{1}}\otimes \cdots \otimes \epsilon^{i_{q}}) \Big) \cdot (\suspend  e_{i_{q}}\cdots \suspend  e_{i_{1}})\\
\mathfrak{I}_{2}& =\sum_{i_1,\cdots, i_{q}} \pr_{\cU\frakg}\circ f \Big( \HmtpA(\bold x_{(1),p})\otimes d_{X}(1\otimes \bold x_{(2),q+r+1})\otimes (\epsilon^{i_{1}}\otimes \cdots \otimes \epsilon^{i_{q}}) \Big) \cdot (\suspend  e_{i_{q}}\cdots \suspend  e_{i_{1}})\\
\mathfrak{I}_{3}& =\sum_{i_1,\cdots, i_{q}}\sum_{k=1}^{q}(-1)^{q+r+k} \cdot \\
& \qquad \quad   \pr_{\cU\frakg}\circ  f \Big( \HmtpA(\bold x_{(1),p})\otimes (1\otimes \bold x_{(2),q+r+1})\otimes (\epsilon^{i_{1}}\otimes \cdots \otimes d_{\frakg}\epsilon^{i_{k}}\otimes \cdots \otimes \epsilon^{i_{q}}) \Big) \cdot (\suspend  e_{i_{q}}\cdots \suspend  e_{i_{1}}).
\end{align*}

Note that we have the following equations 
\begin{gather}
\mathfrak{D}^{i}_{1}=\mathfrak{A}^{i}_{1} \nonumber \\
\sum_{i}(-1)^{q+r+i+1}\mathfrak{D}^{i}_{2}=\mathfrak{B}_{1} \nonumber  \\
\mathfrak{F}_{1}=\mathfrak{A}_{\text{mid}} \nonumber \\
 \mathfrak{I}_{2}=(-1)^{p}\mathfrak{F}_{2}+\sum_{i=1}^{p+q+r+1}(-1)^{i+p+1}\mathfrak{A}^{i}_{2} \nonumber \\
\mathfrak{B}_{2}=(-1)^{p+q+r}\mathfrak{I}_{3} \nonumber\label{eq:12}\\
\mathfrak{B}_{3}=(-1)^{p+q+1}\mathfrak{I}_{1} \nonumber\label{eq:13}
\end{gather}
for any $\bold x \in S^{p+q+r+1}(\frakg[1])$.  
Here, the last second equation is obtained by 
\[(d_{\frakg}\epsilon^{i})\otimes \suspend  e_{i} = \sum_{a<b}\epsilon^{a}\odot \epsilon^{b}\otimes [\suspend  e_{a},\suspend  e_{b}]=\sum_{a<b}\left(\epsilon^{a}\odot \epsilon^{b}\otimes \suspend  e_{a}\suspend  e_{b}+\epsilon^{b}\odot \epsilon^{a}\otimes \suspend  e_{b}\suspend  e_{a}\right) \]
which is induced by 
\[
[\suspend  e_{a},\suspend  e_{b}]=c_{ab}^{i}\suspend  e_{i},\quad 
 d_{\frakg}\epsilon^{i}=c_{ab}^{i}\epsilon^{a}\odot \epsilon^{b},
\]
and the last equation is obtained by 
\[d_{X}(u\otimes  e_{i})=u(\suspend  e_{i})\otimes 1 = -(u(\suspend  e_{i})\otimes  e_{i})\Sgvaction \epsilon^{i}.\]
Since $(\psi_1 - \psi_2)(f)=0$, the homotopy equation \eqref{eq:HptHKRPBW} is equivalent to the system
\begin{gather*}
\sgn(p+1,q,r)=\sgn(p,q,r),\\
\sgn(p,q+1,r)=(-1)^{q+r+1}\sgn(p,q,r),\\
\sgn(p,q,r+1)=(-1)^{q+1}\sgn(p,q,r),
\end{gather*} 
and the sign function 
\[\sgn(p,q,r)=(-1)^{qr+r+q(q+1)/2}\]
is a solution of this system. Therefore, we conclude that 
$$
(h\circ D + d_{\CE}^{\cU\frakg} \circ  h)(f) = 0 = (\psi_1 - \psi_2)(f).
$$
This completes the proof of Proposition~\ref{prop:HKR=PBW}, and thus the proof of Theorem~\ref{thm:KDandDuflo} is also complete.

\appendix

\section{Differential graded coalgebras and  comodules}\label{sec:DGcoAlg}

In this appendix, we summarize the necessary facts about dg coalgebras and dg comodules.  We refer the reader to \cite{MR1786197, MR2954392} for a general introduction to coalgebras.
 
A \textbf{graded coalgebra} $(C,\Delta,\epsilon)$ is a graded vector space $C$, equipped with degree-preserving linear maps $\Delta: C\rightarrow C\otimes C$, called \textbf{coproduct}, and $\epsilon:C\rightarrow \KK$, called \textbf{counit}, such that
\begin{enumerate}
\item $(\Delta\otimes \id_{C})\circ \Delta = (\id_{C}\otimes \Delta)\circ \Delta$,
\item $\mu_{\KK,C} \circ (\epsilon\otimes \id_{C})\circ \Delta = \id_{C} = \mu_{C,\KK} \circ (\id_{C}\otimes \epsilon)\circ \Delta,$
\end{enumerate}
where the ground field $\KK$ is considered as a graded vector space concentrated at degree zero, and $\mu_{\KK,C}, \mu_{C,\KK}$ are the scalar multiplications. 
A graded coalgebra $(C,\Delta,\epsilon)$ is said to be \textbf{cocommutative} if $\tw \circ \Delta = \Delta$, where $\tw: C \otimes C \to C\otimes C$ is defined by $\tw(x \otimes y) = (-1)^{|x||y|} y \otimes x$.
A \textbf{coderivation} of degree $i$ on a graded coalgebra $(C,\Delta, \epsilon)$  is a linear map $Q: C \to C$ of degree $i$ such that 
$$
\Delta\circ Q = (\id_{C}\otimes \, Q+ Q\otimes \id_{C})\circ \Delta \, ,
$$
the space of coderivations of degree $i$ on $C$ is denoted by $\coDer^i(C)$, and $\coDer(C):= \bigoplus_i \coDer^i(C)$. 

A \textbf{dg coalgebra} $(C,\Delta,\epsilon, d_{C})$ is a graded coalgebra $(C,\Delta, \epsilon)$ together with a coderivation $d_{C}:C\rightarrow C$ of degree +1 such that $d_C \circ d_C =0$.

\subsection{Convolution algebras}\label{sec:ConvAlg}

Let $(A, d_A)$ be a dg algebra with unit $1_A$, and $(C,  d_C)$ be a dg coalgebra with counit $\epsilon$. The \textbf{convolution product} $\conv: \Hom(C,A) \times \Hom(C,A) \to \Hom(C,A)$ is the multiplication on the space $\Hom(C,A)$ of $\KK$-linear maps defined by
\begin{equation}\label{eq:ConvProd}
f\conv g := \mu\circ (f\otimes g)\circ \Delta,
\end{equation}
where $f,g \in \Hom(C,A)$, $\mu$ is the multiplication on $A$, and $\Delta$ is the comultiplication on $C$. One can check that  $\big(\Hom(C,A),\conv \, \big)$ is a graded algebra with the unit $1_{\Hom(C,A)} \in \Hom(C,A)$, where 
$$
1_{\Hom(C,A)}(x):= \epsilon(x) \cdot 1_A.
$$ 
Furthermore, one can show that the linear map $d_{\Hom(C,A)}:\Hom^{\bullet}(C,A)\rightarrow \Hom^{\bullet +1}(C,A)$, 
\[d_{\Hom(C,A)}(f):=d_{A}\circ f -(-1)^{\degree{f}}f\circ d_{C},\]
is a derivation of degree one satisfying $(d_{\Hom(C,A)})^2 =0$, and thus the triple $\big(\Hom(C,A), \conv, d_{\Hom(C,A)} \big)$ is a dg algebra which is referred as the \textbf{convolution dg algebra}.

\begin{definition}
Let $V$ be a graded vector space. The \textbf{symmetric coalgebra} over $V$ is the graded vector space $S V := \bigoplus_{n =0}^\infty V^{\odot n}$ whose  counit is the projection $\epsilon_S : S V \onto S^0 V \cong \KK$, and 
whose coproduct 
$\Delta_S$ is determined by 
\begin{gather*}
\Delta_S(1) = 1 \otimes 1, \qquad \Delta_S(v) = 1 \otimes v + v \otimes 1,  \\
\Delta_S({\bf x} \odot {\bf y}) = \Delta_S({\bf x}) \odot \Delta_S({\bf y}) ,   
\end{gather*}
for any $v \in V$, ${\bf x},{\bf y} \in S V$.
\end{definition}

More precisely, the coproduct $\Delta_S$ can be computed by the formula
\begin{equation*}
\Delta_S(v_1 \odot \cdots \odot v_n) = \sum_{i=0}^n \sum_{\sigma \in S_{i,n-i}} \varepsilon \cdot (v_{\sigma(1)} \odot \cdots \odot v_{\sigma(i)}) \otimes (v_{\sigma(i+1)} \odot \cdots \odot v_{\sigma(n)}),
\end{equation*}
where $\varepsilon = \pm 1$ is determined by the Koszul sign convention. 
Also note that the coalgebra $(SV,\Delta_S,\epsilon_S)$ is cocommutative.

\begin{example}
Let $\frakg$ be a Lie algebra. We have the graded coalgebra $S(\frakg[1])$. 
The Lie bracket $[\argument, \argument]_\frakg $ induces a (graded) symmetric operation 
$$
\suspend\inv \circ [\argument, \argument]_\frakg \circ (\suspend \otimes \suspend) : S^2 (\frakg[1]) \to \frakg[1], \quad \, x \odot y \mapsto - \suspend\inv [\suspend x, \suspend y]_\frakg
$$ 
of degree one, where $\suspend: \frakg[1] \to \frakg$ is the degree-shifting map. 
This operation induces a degree-one coderivation $\partial_{\frakg}:S(\frakg[1]) \to S(\frakg[1])$ defined by
\begin{equation*}
\partial_{\frakg}(x_1\odot \cdots\odot x_n)=  \sum_{i<j} (-1)^{i+j} \, \suspend\inv[ \suspend x_i, \suspend x_j]_\frakg\odot x_1\odot \cdots \widehat{x_i}\cdots \widehat{x_j}\cdots \odot x_n \, ,
\end{equation*}
for $x_1, \cdots, x_n \in \frakg[1]$, and one obtain the dg coalgebra $(S(\frakg[1]) , \partial_{\frakg} )$.

Let $A=\KK$ be the trivial dg algebra with zero differential. The associated convolution dg algebra $\Hom(S(\frakg[1]) ,\KK ) \cong   S(\frakg[1])\dual$ is equipped with the differential 
\begin{equation}\label{eq:LieAlgDiff}
d_\frakg (f)=-(-1)^{\degree{f}}f\circ  \partial_\frakg \, .
\end{equation}   
It is straightforward to show that the convolution product $\conv$ coincides with the canonical multiplication on $S(\frakg[1])\dual$ in the sense that 
$$
(f\conv g)(\bold x) = \pair{f \odot g}{\bold x}
$$
for $f,g \in \Hom(S(\frakg[1]) ,\KK ) \cong  S(\frakg[1])\dual$ and $\bold x \in S(\frakg[1])$. 
\end{example}

\subsection{Twisting cochains}

Let $(C,d_C)$ be a dg coalgebra, and $(A,d_A)$ be a dg algebra. An element $\tau\in \Hom^{1}(C,A)$ of degree $+1$ is called a \textbf{twisting cochain} if it satisfies the Maurer--Cartan equation 
\[d_{\Hom(C,A)}(\tau)+\tau \star \tau =0\]
in the convolution algebra $\big(\Hom(C,A), \conv, d_{\Hom(C,A)} \big)$. 
The \textbf{twisted tensor product} $A\otimes_{\tau}C$ is a dg vector space whose underlying space is the tensor product $A\otimes C$, and the differential $d_{\tau}$ is defined by 
\[d_{\tau}=d_{A}\otimes \id_{C} + \id_{A}\otimes d_{C} - (\mu \otimes \id_{C})(\id_{A}\otimes \tau \otimes \id_{C})(\id_{A}\otimes \Delta).\]
Note that the twisted tensor product $A\otimes_{\tau}C$ is a left dg $(A,d_{A})$-module. 
Let $(M,d_{M})$ be a left dg $(A,d_{A})$-module. The space $\Hom_{A}(A\otimes_{\tau}C, M)$ is equipped with a canonical differential $d_\tau$ induced by the dg structures. This dg vector space $(\Hom_{A}(A\otimes_{\tau}C, M), d_\tau)$ will be denoted by $\Hom^{\tau}(C,M)$ and will be referred as the \textbf{twisted Hom space}. See, for example, \cite{MR3338683,MR2954392, lefevrehasegawa:tel-00007761}.

Let $\frakg$ be a Lie algebra. We have the dg coalgebra $(S(\frakg[1]), \partial_{\frakg})$ and the dg algebra $(\cU\frakg,0)$. One can show that the map $\tau:S(\frakg[1]) \to \cU\frakg$ defined by the composition 
\begin{equation*}
\begin{tikzcd}
S(\frakg[1])\arrow[r, "\pr_{\frakg[1]}"'] \arrow[rrr,bend left=15,"\tau"]& 
\frakg[1] \arrow[r, "-\suspend"'] &\frakg \arrow[r, hookrightarrow] & \cU\frakg
\end{tikzcd}
\end{equation*}
is a twisting cochain. Here, $\suspend:\frakg[1]\to \frakg$ is the degree shifting map, and $\frakg\into \cU\frakg$ is the natural embedding. The twisted tensor product $\cU\frakg \otimes_{\tau} S(\frakg[1])$ coincides with the Chevalley--Eilenberg chain complex $(\cU\frakg\otimes S(\frakg[1]), d_X)$ described in Section~\ref{sec:KellerTrip-LieAlg}.

Let $(B,d_{B})$ be a dg algebra. We say $(B,d_{B})$ is a \textbf{dg $\frakg$-algebra} if it is endowed with an infinitesimal action of $\frakg$ (i.e.\  a Lie algebra morphism $\frakg\to \Der^{0}(B)$) such that
\begin{equation*}
d_B(\suspend x \cdot b) = \suspend x \cdot d_B(b),
\end{equation*}
for $\suspend x \in \frakg$, $b \in B$. 
Since a dg $\frakg$-algebra $(B,d_B)$ is also a dg $\cU\frakg$-module, the graded vector space $\Hom(S(\frakg[1]),B) \cong \Hom_{\cU\frakg}(\cU\frakg\otimes S(\frakg[1]),B)$ is endowed with the differential $d_\tau$ and the convolution product $\conv$. 

\begin{proposition}\label{prop:TwistedConv}
Let $(B,d_{B})$ be a dg $\frakg$-algebra. The graded vector space $\Hom(S(\frakg[1]),B)$, equipped with the differential $d_\tau$ and the convolution product $\conv$, is a dg algebra.
\end{proposition}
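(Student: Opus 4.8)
The plan is to verify the two defining properties of a dg algebra separately: that the convolution product $\conv$ makes $\Hom(S(\frakg[1]),B)$ an associative unital graded algebra, and that the differential $d_\tau$ is a derivation of degree $+1$ squaring to zero. The first property and the identity $d_\tau^2=0$ are essentially free. Indeed, the product $\conv$ depends only on the coassociative counital coalgebra $(S(\frakg[1]),\Delta_S,\epsilon_S)$ and the associative unital algebra $(B,\mu_B,1_B)$, so by the discussion of convolution algebras in Section~\ref{sec:ConvAlg}, $(\Hom(S(\frakg[1]),B),\conv)$ is a graded algebra with unit $\mathbf{x}\mapsto \epsilon_S(\mathbf{x})\,1_B$. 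Likewise $d_\tau^2=0$ is automatic: since $\tau$ is a twisting cochain, $\cU\frakg\otimes_\tau S(\frakg[1])$ is a genuine dg $\cU\frakg$-module, and $B$ is a dg $\cU\frakg$-module because it is a dg $\frakg$-algebra; hence $\Hom_{\cU\frakg}(\cU\frakg\otimes_\tau S(\frakg[1]),B)\cong \Hom(S(\frakg[1]),B)$ is a cochain complex whose canonical differential $d_\tau$ squares to zero (see Appendix~\ref{sec:DGcoAlg}). Thus the heart of the matter is the Leibniz rule
\[ d_\tau(f\conv g) = d_\tau(f)\conv g + (-1)^{\degree f}\, f\conv d_\tau(g). \]

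To attack this I would first make $d_\tau$ explicit. Transporting the differential of $\Hom_{\cU\frakg}(\cU\frakg\otimes_\tau S(\frakg[1]),B)$ through the identification $g\mapsto f$, $f(\mathbf{x})=g(1\otimes\mathbf{x})$, and reading off the formula for $d_X$ from Section~\ref{sec:KellerTrip-LieAlg}, one obtains a decomposition $d_\tau=d_0+d_1$. Here
\[ d_0(f)=d_B\circ f-(-1)^{\degree f}\, f\circ\partial_\frakg \]
is exactly the convolution differential $d_{\Hom(S(\frakg[1]),B)}$ associated with the dg coalgebra $(S(\frakg[1]),\partial_\frakg)$ and the dg algebra $(B,d_B)$, while $d_1$ is the twisting term which, up to the standard degree sign, equals
\[ d_1(f)= \mu\circ(\tau\otimes f)\circ\Delta_S, \]
with $\mu\colon\cU\frakg\otimes B\to B$ the module action (factoring through $\frakg$ since $\tau$ takes values in $\frakg$). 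By the convolution dg algebra result of Section~\ref{sec:ConvAlg}, $d_0$ already satisfies the Leibniz rule above, so it remains only to prove that $d_1$ is a derivation of $\conv$.

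The verification for $d_1$ is where the dg $\frakg$-algebra hypothesis enters, and I expect it to be the main obstacle. I would expand $d_1(f\conv g)=\mu\circ\bigl(\tau\otimes(f\conv g)\bigr)\circ\Delta_S$ and use coassociativity of $\Delta_S$ to rewrite the two factors through the triple coproduct. Because $\frakg$ acts on $B$ by derivations --- precisely the content of $\suspend x\in\Der^0(B)$ together with $d_B(\suspend x\cdot b)=\suspend x\cdot d_B(b)$ --- the action distributes over the multiplication $\mu_B$ as a graded Leibniz rule; and since $\tau$ lands in the degree-zero subspace $\frakg$, the Koszul sign produced when $\tau(\mathbf{x}_{(1)})$ is commuted past $f(\cdots)$ is controlled. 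Applying this Leibniz rule and then regrouping the coproduct factors by coassociativity and cocommutativity of $\Delta_S$ should reassemble the right-hand side into $d_1(f)\conv g$ and $(-1)^{\degree f} f\conv d_1(g)$.

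The only delicate point, and the step I expect to absorb most of the work, is tracking the Koszul signs through the successive coproduct splittings and through the commutation of $\tau(\mathbf{x}_{(1)})$ past the value $f(\mathbf{x}_{(2)})$; once the relevant sign pattern is pinned down, as in the analogous sign computations of Section~\ref{sec:KD}, the Leibniz identity for $d_1$ follows. Combining this with the Leibniz property of $d_0$, the associativity and unitality of $\conv$, and $d_\tau^2=0$ completes the proof that $\bigl(\Hom(S(\frakg[1]),B),\conv,d_\tau\bigr)$ is a dg algebra.
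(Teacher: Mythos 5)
Your proposal is correct and follows essentially the same route as the paper: the paper's proof likewise splits $d_\tau$ into the convolution differential $d_{\Hom(S(\frakg[1]),B)}$ (already known to be a derivation of $\conv$) plus the twisting term, and deduces the Leibniz rule for the latter from the hypothesis that $\frakg$ acts on $B$ by derivations. Your write-up merely makes explicit the points the paper leaves implicit (unitality, $d_\tau^2=0$, the formula for the twisting term, and the Koszul sign bookkeeping), so there is nothing to correct.
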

\begin{proof}
Since $\big(\Hom(S(\frakg[1]),B),\conv,d_{\Hom(S(\frakg[1]),B)}\big)$ is a dg algebra, it suffices to show the compatibility of the convolution product $\conv$ and $d_{\tau}-d_{\Hom(S(\frakg[1]),B)}$. This is a consequence of the assumption $\frakg$ acts on $B$ by derivations.
\end{proof}

In the setting of Proposition~\ref{prop:TwistedConv}, the differential $d^{\tau}$ is also written as $d_{\CE}^{B}$, called the {\bf Chevalley--Eilenberg differential}. More explicitly, 
\begin{equation}\label{eq:CEdiff}
\begin{split}
d_{\CE}^{B}(f)(x_1\odot \cdots\odot x_n)&=
  \sum_{i}(-1)^{i+\degree{f}} \, \suspend x_i \cdot   f(x_{1}\odot \cdots \widehat{x_i} \cdots \odot x_{n})\\
&\quad +(d_B\circ f)(x_1 \odot \cdots\odot x_n) \\
&\quad \quad  - (-1)^{\degree{f}}  \sum_{i<j}(-1)^{i+j} f(\suspend\inv [\suspend x_i, \suspend x_j]_\frakg \odot x_1 \odot \cdots \widehat{x_i}\cdots \widehat{x_j}\cdots \odot x_n)   ,
\end{split}
\end{equation}
for $f \in \Hom(S(\frakg[1]), B)$ and $x_1, \cdots, x_n \in \frakg[1]$.

\begin{remark}
Let $(B,d_B) = (B,0)$ be a dg $\frakg$-algebra with the zero differential. 
There is an alternative formulation of the Chevalley--Eilenberg differential  
$\tilde d_{\CE}:\Hom(\Lambda^{\bullet}\frakg, B) \to \Hom(\Lambda^{\bullet+1}\frakg, B)$ given by
\begin{equation*}
\begin{split}
\tilde d_{\CE}(\tilde f)( \suspend x_1 \wedge \cdots \wedge \suspend x_{n}) & := \sum_{i=1}^n (-1)^{i+1} \suspend x_i \cdot \tilde f(\suspend x_1 \wedge \cdots \widehat{\suspend x_i} \cdots \wedge \suspend x_n) \\
& \quad +  \sum_{i<j} (-1)^{i+j} \tilde f([\suspend x_i,\suspend x_j]_\frakg \wedge \suspend x_1 \wedge \cdots \widehat{\suspend x_i} \cdots \widehat{\suspend x_j} \cdots \wedge \suspend x_n )\, .
\end{split}
\end{equation*} 
The space $\Hom(\Lambda^{\bullet}\frakg, B)$ is equipped the product $\tilde\conv:\Hom(\Lambda^{\bullet}\frakg, B) \times \Hom(\Lambda^{\bullet}\frakg, B) \to \Hom(\Lambda^{\bullet}\frakg, B)$, 
$$
(\tilde f \ \tilde\conv \ \tilde g)(\suspend x_1\wedge \cdots \wedge \suspend x_{n+m}):= \sum_{\sigma \in S_{n,m}} (-1)^\sigma \ \tilde f(\suspend x_{\sigma(1)} \wedge \cdots \wedge \suspend x_{\sigma(n)}) \tilde g(\suspend x_{\sigma(n+1)} \wedge \cdots \wedge \suspend x_{\sigma(n+m)}) \, .
$$
The triple $\big(\Hom(\Lambda^{\bullet}\frakg, B), \ \tilde\conv \ , \tilde d_{\CE}\big)$ is a dg algebra.

Let $\eta: \Hom(\Lambda^{\bullet}\frakg, B) \to \Hom(S^\bullet(\frakg[1]), B)$ be the map 
$$
\eta(\tilde f)(x_1\odot \cdots \odot x_n) := (-1)^{1+ \cdots +n}\, \tilde f(\suspend x_1 \wedge \cdots \wedge \suspend x_n)
$$
for $\tilde f \in \Hom(\Lambda^{n}\frakg, B)$, $x_1, \cdots, x_n \in \frakg[1]$. 
One can check that  
$$
\eta : \big(\Hom(\Lambda^{\bullet}\frakg, B), \ \tilde\conv \ , \tilde d_{\CE}\big) \to \big(\Hom^{\tau}(S^{\bullet}(\frakg[1]), B), \conv  \big)
$$
is an isomorphism of dg algebras. 
\end{remark}

\begin{remark}
Since the reduced Chevalley--Eilenberg chain complex $\cU\frakg \otimes_{\tau} S(\frakg[1])/\KK$ is acyclic, by a theorem \cite[Theorem~2.3.1]{MR2954392} of twisting cochains, one has a quasi-isomorphism from the cobar complex of $S(\frakg[1])$ to $\cU \frakg$. This quasi-isomorphism induces a map 
$\grHHcx^\bullet (S(\frakg[1])\dual,d_\frakg) \to \Hom^\bullet (S(\frakg[1]), \cU\frakg)$ which coincides with the map in the proof of \cite[Theorem~4.10]{MR2816610}. See also \cite{MR4237031}.
\end{remark}

\subsection{Differential graded comodules and cogenerators}\label{sec:coGencoMod} 

Let $(C, \Delta, \epsilon)$ be a graded coalgebra. A \textbf{(right) graded comodule} $(M,\phi_{M})$ over $C$ is a graded vector space $M$, equipped with a linear map $\phi_{M}:M\rightarrow M\otimes C$ of degree zero such that
\begin{enumerate}
\item[(i)] $(\phi_{M}\otimes \id_{C})\circ \phi_{M}=(\id_{M}\otimes \Delta)\circ \phi_{M}$;
\item[(ii)] $\mu_{M,\KK} \circ (\id_{M}\otimes \, \epsilon)\circ \phi_{M}=\id_{M},$
\end{enumerate}
where $\mu_{M,\KK}: M \otimes \KK \to M$ is the scalar multiplication. 
Let $(M,\phi_M)$ and $(N,\phi_N)$ be comodules over $C$. A \textbf{morphism of comodules} is a linear map $\Psi \in \Hom_{\KK}^\bullet(M, N)$  such that 
\begin{equation}\label{eq:coModMorAxiom}
(\Psi\otimes \id_{C})\circ \phi_{M} = \phi_{N}\circ \Psi.
\end{equation}
We denote by $\coHom_C(M,N)$ the space of morphisms of  comodules from $(M,\phi_M)$ to $(N,\phi_N)$.

A \textbf{(right) dg comodule} $(M,\phi_{M}, d_{M})$ over a dg coalgebra $(C,\Delta,\epsilon, d_{C})$ is a (right) graded comodule over $C$, together with a linear map $d_{M}:M\rightarrow M$ of degree +1 such that $d_{M} \circ d_{M} =0$ and
\begin{equation*}\label{eq:coModDer}
\phi_{M}\circ d_{M}=(d_{M}\otimes \id_{C} + \id_{M}\otimes \, d_{C})\circ \phi_{M}.
\end{equation*}

\begin{example}\label{ex:TwistedTensorComodule}
Let $(C,\Delta, d_C)$ be a dg coalgebra and $(A,\mu, d_{A})$ be a dg algebra. If $\tau:C\to A$ is a twisting cochain, the twisted tensor product $A\otimes_{\tau}C$ equipped with  $\id_A \otimes \Delta:A \otimes C \to A \otimes C \otimes C$ is a right dg comodule over $C$. In particular, the Chevalley--Eilenberg chain complex $(\cU\frakg\otimes S(\frakg[1]),d_X)$ is a right dg $S(\frakg[1])$-comodule.
\end{example}

\begin{lemma}
Let $(M, \phi_M, d_M)$ and $(N, \phi_N, d_N)$ be dg comodules.  
The space $\coHom_C(M,N)$ is a dg vector subspace of $\Hom_\KK(M,N)$ whose differential $\partial$ is defined by 
$$
\partial(\Psi):= d_N \circ \Psi - (-1)^{|\Psi|} \Psi \circ d_M.
$$
\end{lemma}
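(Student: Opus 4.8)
The plan is to split the statement into two independent verifications. First I would observe that the ambient space $\Hom_\KK(M,N)$, equipped with the operator $\partial(\Psi) = d_N \circ \Psi - (-1)^{\degree{\Psi}}\Psi \circ d_M$, is already a dg vector space: the identity $\partial^2 = 0$ is the standard consequence of $d_M^2 = 0$, $d_N^2 = 0$, and the Koszul sign, and uses no comodule structure at all. Hence the only substantive claim is that $\coHom_C(M,N)$ is a $\partial$-stable subspace of $\Hom_\KK(M,N)$, i.e.\ that $\partial\Psi$ is again a morphism of comodules whenever $\Psi$ is; closure plus the ambient $\partial^2=0$ then immediately gives the asserted dg subspace structure.

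The heart of the argument is therefore to check, for homogeneous $\Psi \in \coHom_C(M,N)$ of degree $r = \degree{\Psi}$, that $\partial\Psi$ satisfies the morphism axiom \eqref{eq:coModMorAxiom}, namely $(\partial\Psi \otimes \id_C)\circ \phi_M = \phi_N \circ \partial\Psi$. I would start from the right-hand side, substitute the definition of $\partial\Psi$, and push $\phi_N$ through $d_N$ using the dg comodule identity $\phi_N \circ d_N = (d_N \otimes \id_C + \id_N \otimes d_C)\circ \phi_N$. Replacing $\phi_N \circ \Psi$ by $(\Psi \otimes \id_C)\circ \phi_M$ via the hypothesis on $\Psi$, and applying the analogous comodule identity for $M$ to the leftover term $\phi_N \circ \Psi \circ d_M$, converts every summand into an expression of the shape $(\cdots)\circ \phi_M$.

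At that point the computation is pure bookkeeping of Koszul signs on the two tensor factors. Expanding $(d_N \otimes \id_C)(\Psi \otimes \id_C) = (d_N\Psi)\otimes \id_C$, $(\id_N \otimes d_C)(\Psi \otimes \id_C) = (-1)^r\,\Psi \otimes d_C$, and likewise the two terms coming from $d_M \otimes \id_C$ and $\id_M \otimes d_C$, one finds that the two summands proportional to $\Psi \otimes d_C$ appear with opposite signs and cancel, leaving exactly $\big((d_N\Psi - (-1)^r \Psi d_M)\otimes \id_C\big)\circ \phi_M = (\partial\Psi \otimes \id_C)\circ \phi_M$, which is the left-hand side. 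The main (indeed the only) obstacle is keeping these signs consistent — specifically verifying that the $\id_N \otimes d_C$ contribution and the $\id_M \otimes d_C$ contribution cancel rather than reinforce. This cancellation is precisely what forces the sign $-(-1)^r$ in the definition of $\partial$, and once it is confirmed the lemma follows.
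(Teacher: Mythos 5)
Your proposal is correct: the paper states this lemma without proof, treating it as a routine verification, and your argument is exactly the standard check that is being left implicit. In particular, your sign bookkeeping is right — the $\id_N\otimes d_C$ term contributes $+(-1)^{|\Psi|}(\Psi\otimes d_C)\circ\phi_M$ while the $\id_M\otimes d_C$ term contributes $-(-1)^{|\Psi|}(\Psi\otimes d_C)\circ\phi_M$, so they cancel and leave $(\partial\Psi\otimes\id_C)\circ\phi_M=\phi_N\circ\partial\Psi$, which together with $\partial^2=0$ on the ambient $\Hom_\KK(M,N)$ gives the claim.
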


Let $(C,\Delta,\epsilon, d_C)$ be a dg coalgebra, and let $(A,\mu,1_A , d_A) = \big(\Hom(C,\KK), \conv, 1_{\Hom(C,\KK)}, d_{\Hom(C,\KK)} \big)$ be the convolution dg algebra.
Let $(M,\phi_{M}, d_M)$ be a right dg comodule over $C$. We define the action map $\rho_{M}:A\otimes M \rightarrow M$ by
\begin{equation*}
\rho_{M}(f\otimes m):= \mu_{M,\KK} \circ (\id_{M}\otimes f)\circ \phi_{M}(m),
\end{equation*}
where $m\in M$, $f \in A = \Hom(C,\KK)$.

\begin{proposition}\label{prop:coModv.s.Mod}
Let $(M, \phi_M)$ be a right dg comodule over a dg coalgebra $C$. 
The triple $(M,\rho_M, d_M)$ is a left dg module over the convolution dg algebra $A = \Hom(C, \KK)$. 

Furthermore, let $(N,\phi_N,d_N)$ be another right dg comodule over $C$, and $(N,\rho_N,d_N)$ be the associated dg module over $A$. A linear map $\Psi:M \to N$ is a morphism of right comodules over $C$ if and only if $\Psi$ is a morphism of left modules over $A$, i.e.\ $\coHom_C(M,N) = \Hom_A(M,N)$.  
\end{proposition}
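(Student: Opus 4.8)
The plan is to verify the left dg $A$-module axioms for $(M,\rho_M,d_M)$ directly from the three defining axioms of a dg comodule, and then to derive the identification $\coHom_C(M,N)=\Hom_A(M,N)$ by pairing the comodule-morphism condition against the separating family $A=\Hom(C,\KK)$. Throughout I write the coaction in Sweedler notation $\phi_M(m)=m_{(0)}\otimes m_{(1)}$ (sum suppressed), so that, under the Koszul sign rule, $f\cdot m:=\rho_M(f\otimes m)=(-1)^{|f|\,|m_{(0)}|}\,m_{(0)}\,f(m_{(1)})$.

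First I would check the purely algebraic module axioms. The unit axiom $1_A\cdot m=m$ is immediate from the counit axiom (ii) together with $1_A=\epsilon$. For associativity $(f\conv g)\cdot m=f\cdot(g\cdot m)$, I would compute the right-hand side by first evaluating $\phi_M(g\cdot m)=\phi_M(m_{(0)})\,g(m_{(1)})$ and then applying the coassociativity axiom (i) to replace $(\phi_M\otimes\id_C)\circ\phi_M$ by the iterated coaction $m_{(0)}\otimes m_{(1)}\otimes m_{(2)}$; the left-hand side is computed directly from the definition $f\conv g=\mu_\KK\circ(f\otimes g)\circ\Delta$ of the convolution product. Both sides reduce to $m_{(0)}\,f(m_{(1)})\,g(m_{(2)})$, and the Koszul signs agree.

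The one computation requiring genuine care is the dg compatibility $d_M(f\cdot m)=(d_A f)\cdot m+(-1)^{|f|}f\cdot(d_M m)$, where $d_A f=-(-1)^{|f|}f\circ d_C$ is the convolution differential. Feeding the comodule-differential axiom $\phi_M\circ d_M=(d_M\otimes\id_C+\id_M\otimes d_C)\circ\phi_M$ into $f\cdot(d_M m)$ produces two terms: one involving $d_M(m_{(0)})$, which reproduces $d_M(f\cdot m)$ after collecting signs, and one involving $f(d_C m_{(1)})$. The point is that the sign $-(-1)^{|f|}$ built into $d_A$ is exactly what cancels this second, $\id_M\otimes d_C$ term against $(d_A f)\cdot m$; I would present this as a short sign check. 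Since $d_M^2=0$ is already part of the dg comodule data, this completes the verification that $(M,\rho_M,d_M)$ is a left dg $A$-module.

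For the equivalence of morphism spaces I would argue both inclusions. If $\Psi$ satisfies the comodule axiom $(\Psi\otimes\id_C)\circ\phi_M=\phi_N\circ\Psi$, then applying $\mu_{N,\KK}\circ(\id_N\otimes f)$ to both sides and simplifying the Koszul signs yields $\Psi(f\cdot m)=(-1)^{|f|\,|\Psi|}f\cdot\Psi(m)$, the graded left-module-morphism condition. Conversely, starting from this module identity for every $f\in A$ and cancelling the common factor $(-1)^{|f|\,|\Psi|}$, one obtains that $\mu_{N,\KK}\circ(\id_N\otimes f)$ agrees on $(\Psi\otimes\id_C)\phi_M(m)$ and on $\phi_N(\Psi(m))$ for all $f$. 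The main obstacle is the concluding separation step: writing the difference of these two elements of $N\otimes C$ as $\sum_i n_i\otimes c_i$ with the $n_i$ linearly independent, the vanishing of all pairings $f(c_i)$ forces $c_i=0$ because the full dual $A=\Hom(C,\KK)$ separates points of $C$; hence the two elements coincide and the comodule axiom holds. The only real difficulties are thus the sign bookkeeping in the Leibniz rule and this separating-duality argument, the remaining steps being direct transcriptions of the comodule axioms.
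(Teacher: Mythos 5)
Your proposal is correct and follows essentially the same route as the paper: the forward implication by applying $\mu_{N,\KK}\circ(\id_N\otimes f)$ to the comodule-morphism identity, and the converse by using the module identity for all $f\in\Hom(C,\KK)$ and then separating points of $C$ with the full dual. The only differences are that you carry out the direct verification of the dg-module axioms (including the Leibniz sign check), where the paper simply cites \cite[Proposition~2.2.1]{MR1786197}, and you make explicit the separating-duality step that the paper leaves implicit in its final ``Thus, we have'' — both of which are faithful completions of, not departures from, the paper's argument.
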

\begin{proof}
The first part of the proposition can be shown by a direct computation. See \cite[Proposition~2.2.1]{MR1786197}. We only prove the second part here. 
Let $(N,\phi_N,d_N)$ be another right dg comodule over $C$, and $\Psi:M \to N$ be a $\KK$-linear map. If $\Psi$ is a comodule morphism over $C$, then 
\begin{align*}
\Psi \circ \rho_M(f \otimes m) & = \Psi \circ\mu_{M,\KK} \circ (\id_M \otimes f) \circ \phi_M(m) \\
& = (-1)^{|f||\Psi|} \mu_{N,\KK} \circ (\id_N \otimes f) \circ (\Psi \otimes \id_C) \circ \phi_M(m) \\
& = (-1)^{|f||\Psi|} \mu_{N,\KK} \circ (\id_N \otimes f)  \circ \phi_N \circ \Psi(m) \\
& = (-1)^{|f||\Psi|} \rho_N\big( f \otimes \Psi(m) \big),
\end{align*}
i.e.\ $\Psi: (M, \rho_M) \to (N,\rho_N)$ is a module morphism over $A$. Conversely, if $\Psi$ is a module morphism over $A$, then 
\begin{align*}
\mu_{N,\KK} \circ (\id_N \otimes f)\circ \Big(\phi_N \circ \Psi\Big)(m) & = \rho_N(f \otimes \Psi(m)) \\
& = (-1)^{|f||\Psi|}\Psi(\rho_M(f \otimes m)) \\
& = (-1)^{|f||\Psi|}\Psi \circ \mu_{M,\KK}\circ (\id_M \otimes f) \circ \phi_M(m) \\
& = \mu_{N,\KK} \circ (\id_N \otimes f) \circ \Big((\Psi \otimes \id_C) \circ \phi_M \Big)(m)
\end{align*} 
for any $f \in \Hom(C,\KK)$, any $m \in M$. 
Thus, we have
$$
(\Psi \otimes \id_C) \circ \phi_M = \phi_N \circ \Psi,
$$
i.e.\ $\Psi$ is a comodule morphism over $C$. 
\end{proof}

\begin{example}\label{ex:SgcoMod}
Since $M=\cU\frakg \otimes_\tau  S(\frakg[1]) = (\cU\frakg \otimes S(\frakg[1]), d_{X})$ is a right dg comodule over $(S(\frakg[1]),\partial_\frakg)$, it is also a left dg module over $(S(\frakg[1])\dual,d_\frakg)$ by Proposition~\ref{prop:coModv.s.Mod}. 
More explicitly, the module structure $\rho:S(\frakg[1])\dual \otimes \cU\frakg \otimes S(\frakg[1]) \to \cU\frakg \otimes S(\frakg[1])$ is characterized by  
\begin{align*}
\rho\big(\xi \otimes (u \otimes x_1 \odot \cdots \odot x_n)\big) & = \mu_{M, \KK} \circ ( \id_M \otimes \pair{\xi}{\argument}) \circ (\id_{\cU\frakg} \otimes \Delta_S)(u \otimes x_1 \odot \cdots \odot x_n) \\
& = \sum_{i=1}^n (-1)^{i+1} \pair{\xi}{x_i} \ u \otimes x_1 \odot \cdots \widehat{x_i} \cdots \odot x_n \, ,
\end{align*}
for $\xi \in S^1 (\frakg[1])\dual$, $u \in \cU\frakg$, $x_1, \cdots, x_n \in \frakg[1]$. 
Furthermore, since $S(\frakg[1])\dual$ is graded commutative, the left action $\rho$ induce a right action $\tilde\rho$ on $M$: \begin{align*}
\tilde\rho\big( (u \otimes x_1 \odot \cdots \odot x_n) \otimes \xi \big) & = (-1)^{n} \rho\big(\xi \otimes (u \otimes x_1 \odot \cdots \odot x_n)\big) \\
& = \sum_{i=1}^n (-1)^{n-i} \pair{x_i}{\xi} \ u \otimes x_1 \odot \cdots \widehat{x_i} \cdots \odot x_n \, ,
\end{align*}  
which coincides with the contraction action $\Sgvaction$ defined in Section~\ref{sec:KellerTrip-LieAlg}.
\end{example}

\subsubsection*{Cogenerators of graded comodules}

Let $A$ be a $\KK$-algebra. A generator of a $A$-module $(N, \rho_N)$ can be considered as a vector space $W$, together with a linear map $\iota_{W}: W\rightarrow N$ such that the map
\[A\otimes W \xrightarrow{\id_{A}\otimes \iota_{W}} A\otimes N \xrightarrow{\rho_N} N\]
is surjective. Inspired by this point of view, we define cogenerator as follows.

\begin{definition}
Let $(C,\Delta, \epsilon)$ be a graded coalgebra, and $(M, \rho_M)$ be a graded comodule over $C$. 
A \textbf{cogenerator} of $M$ is a graded vector space $V$, together with a degree-preserving linear map $\pi_V:M\rightarrow V$, such that the composition 
\[
p_{V}:=(\pi_{V}\otimes \id_{C}) \circ \phi_{M}: M\xrightarrow{\phi_M}M\otimes C \xrightarrow{\pi_V\otimes \id_{C}} V\otimes C\]
is injective.
\end{definition}

Since the diagram
$$
\begin{tikzcd}
M \arrow[r, "\phi_M"'] \arrow[d, "\phi_{M}"'] \ar[rrr,bend left=15, "p_V"]  & M \otimes C \ar[rr," \pi_V \otimes \id_C"'] \ar[d,"\id_M \otimes \Delta"]& & V\otimes C \arrow[d, "\id_{V}\otimes \Delta"]\\
M\otimes C \arrow[r, "\phi_M \otimes \id_{C}"]\ar[rrr,bend right =15,"p_V \otimes \id"']& M \otimes C \otimes C \ar[rr,"\pi_V \otimes \id_C \otimes \id_C"] & & V\otimes C \otimes C \, ,
\end{tikzcd}
$$
commutes, the map $p_V:M\rightarrow V\otimes C$ is a morphism of comodules. 
A cogenerator $\pi_V$ is said to be \textbf{free} if $p_V$ is an isomorphism of comodules.

\begin{proposition}\label{prop:coGenerator}
Let $(M,\rho_M)$ and $(N, \rho_N)$ be graded comodules over $C$, and $\pi_{V}:N \rightarrow V$ be a cogenerator of $N$.  Then the pushforward map
\begin{equation*}
{\pi_{V}}_{\ast}:\coHom_C (M, N)\rightarrow \Hom(M,V)
\end{equation*}
is an embedding of graded vector spaces. Moreover, if $\pi_V$ is a free cogenerator, then ${\pi_{V}}_{\ast}$ is an isomorphism of graded vector spaces.
\end{proposition}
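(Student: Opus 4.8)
The plan is to base both assertions on a single structural identity and then exploit the injectivity (respectively bijectivity) of $p_V$ that is built into the definition of a (free) cogenerator. The starting point is that, for any comodule morphism $\Psi \in \coHom_C(M,N)$, the pushforward ${\pi_V}_\ast(\Psi) = \pi_V \circ \Psi$ already determines $p_V \circ \Psi$ through
\begin{equation}\label{eq:coGenModMor}
p_V \circ \Psi = \big({\pi_V}_\ast(\Psi) \otimes \id_C\big) \circ \phi_M .
\end{equation}
To prove \eqref{eq:coGenModMor} I would unwind $p_V = (\pi_V \otimes \id_C)\circ \phi_N$ and substitute the comodule morphism axiom \eqref{eq:coModMorAxiom}, $\phi_N \circ \Psi = (\Psi \otimes \id_C)\circ \phi_M$, so that $(\pi_V \otimes \id_C)\circ \phi_N \circ \Psi = (\pi_V \otimes \id_C)\circ(\Psi \otimes \id_C)\circ \phi_M = ((\pi_V \circ \Psi)\otimes \id_C)\circ \phi_M$. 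Since $p_V$ is injective by the definition of a cogenerator, \eqref{eq:coGenModMor} shows that $\Psi$ is recovered from ${\pi_V}_\ast(\Psi)$; in particular ${\pi_V}_\ast(\Psi) = 0$ forces $p_V \circ \Psi = 0$ and hence $\Psi = 0$. This is exactly the embedding statement.

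For the second part I would assume $\pi_V$ is free, i.e.\ $p_V : N \to V \otimes C$ is an isomorphism of comodules, and construct an explicit preimage for an arbitrary $g \in \Hom(M,V)$. The natural guess, dictated by \eqref{eq:coGenModMor}, is
\[
\Psi := p_V^{-1} \circ (g \otimes \id_C) \circ \phi_M .
\]
Two verifications remain. First, $\Psi$ must be a comodule morphism: I would show that $(g \otimes \id_C)\circ \phi_M : M \to V \otimes C$ is a morphism into the comodule $(V \otimes C, \id_V \otimes \Delta)$, which follows from the coassociativity axiom $(\phi_M \otimes \id_C)\circ \phi_M = (\id_M \otimes \Delta)\circ \phi_M$ since both sides of the morphism condition reduce to $(g \otimes \Delta)\circ \phi_M$; as the inverse of a comodule isomorphism is again a comodule morphism, $\Psi$ is then a composite of comodule morphisms and so lies in $\coHom_C(M,N)$. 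Second, ${\pi_V}_\ast(\Psi) = g$: using the relation $\pi_V = (\id_V \otimes \epsilon)\circ p_V$ (a consequence of the counit axiom for $N$) together with $p_V \circ \Psi = (g \otimes \id_C)\circ \phi_M$, I would compute $\pi_V \circ \Psi = (\id_V \otimes \epsilon)\circ(g \otimes \id_C)\circ \phi_M = (g \otimes \epsilon)\circ \phi_M = g$, the final step being the counit axiom for $M$. Combined with the embedding property, this yields that ${\pi_V}_\ast$ is an isomorphism.

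Since every step is a formal manipulation of the coaction, coproduct, and counit, there is no serious analytic or combinatorial obstacle, and no Koszul signs intervene because $\id_C$, $\Delta$, and $\epsilon$ all have degree zero. The part demanding the most care is the surjectivity argument, where one must correctly fix the comodule structure $\id_V \otimes \Delta$ on $V \otimes C$ and keep track of the canonical identification $V \otimes \KK \cong V$ when applying the counit; this bookkeeping, rather than any genuine difficulty, is the main thing to get right.
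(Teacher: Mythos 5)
Your proof is correct and follows essentially the same route as the paper: the embedding is obtained from the identity $p_V\circ\Psi = (({\pi_V}_\ast\Psi)\otimes\id_C)\circ\phi_M$ together with injectivity of $p_V$, and the surjectivity in the free case rests on the same key formula $(g\otimes\id_C)\circ\phi_M$. The only cosmetic difference is that you compose explicitly with $p_V^{-1}$, whereas the paper phrases this as a reduction to the cofree case $N = V\otimes C$ with $\pi_V = \mu_{V,\KK}\circ(\id_V\otimes\epsilon)$ --- the two formulations are interchangeable.
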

\begin{proof}
Let $\Psi_1,\Psi_2 \in \coHom_{C}(M,N)$ be comodule morphisms such that ${\pi_{V}}_{\ast}(\Psi_1)={\pi_{V}}_{\ast}(\Psi_2)$. 
Then, by \eqref{eq:coModMorAxiom}, one can show that
$$
p_V \circ \Psi_1 = p_V \circ \Psi_2.
$$
Since $p_V$ is injective, we have $\Psi_1 = \Psi_2$.

Assume $\pi_V$ is a free cogenerator. Since $p_V$ is an isomorphism of comodules, it suffices to verify the case
$$
N = V \otimes C, \qquad \pi_V = \mu_{V,\KK} \circ (\id_V \otimes \epsilon): V \otimes C \to V.
$$
For each  $f\in \Hom(M,V)$, it straightforward to show that the map
$$
\Psi_f:=(f\otimes \id_{C})\circ \phi_{M} : M \rightarrow V\otimes C,
$$ 
is a morphism of comodules such that $\pi_V \circ \Psi_f = f$. Thus, the proof is completed. 
\end{proof}

\begin{example}
Let $V$ and $W$ be a graded vector spaces. Then the pair $(V \otimes SW, \id_V \otimes \Delta)$ is a graded comodule over $SW$. The projection $\pr: V \otimes SW \onto V \otimes S^0W \cong V$ is a free cogenerator, because the composition 
\[
 V \otimes SW \xrightarrow{\id_{V} \otimes \Delta } V \otimes SW \otimes SW  \xrightarrow{\pr \otimes \id_{SW}} V \otimes SW \]
is the identity map. Thus, by Proposition~\ref{prop:coGenerator}, the pushforward map 
$$
{\pr}_{\ast}:\coHom_{SW} (V \otimes SW, V \otimes SW)\rightarrow \Hom(V \otimes SW,V)
$$ 
is an isomorphism of graded vector spaces. In fact, for $f \in \Hom(V \otimes SW,V)$, the composition 
\begin{equation}\label{eq:coGenModMor}
\Psi_f :V \otimes SW \xto{\id_V \otimes \Delta} V \otimes SW \otimes SW \xto{f \otimes \id_{SW}}  V \otimes SW, 
\end{equation}
is the comodule morphism such that $\pr_\ast(\Psi_f) = f$. 
\end{example}

\section{Hochschild complexes and tensor coalgebras}\label{appendix:HHcochains}

In this appendix, we recall Getzler's construction of Hochschild complexes in \cite{MR1261901} and show an isomorphism between the version here and our version in Section~\ref{sec:HHcxDGAlg}. 
For a dg algebra $A$, we consider Getzler's formulas as the natural formulas on $A[1]$, and ours are the formulas on $A$ obtained by Getzler's formulas composed with proper degree-shifting maps.

\subsection{Tensor coalgebras}

Let $V$ be a graded vector space. The \textbf{tensor coalgebra} $(TV, \Delta_T, \epsilon_T)$ over $V$ is the graded vector space $TV = \bigoplus_{n=0}^\infty V^{\otimes n}$ together with the counit $\epsilon_T = \pr : TV \onto V^{\otimes 0} = \KK$ and the coproduct $\Delta_T: TV \to TV \otimes TV$, 
\begin{equation*}
\begin{split}
\Delta_{T}(v_{1}\otimes \cdots\otimes v_{n})& = 1 \otimes (v_1 \otimes \cdots \otimes v_n) \\
& \quad +  \sum_{i=1}^{n-1}\big(v_1\otimes \cdots \otimes v_i\big)\otimes \big( v_{i+1}\otimes \cdots \otimes v_n \big)    +  (v_1 \otimes \cdots \otimes v_n) \otimes 1.
\end{split}
\end{equation*}

Let $\pr _{V}:TV \onto V$ be the canonical projection. By imitating the techniques of cogenerators in Section~\ref{sec:coGencoMod}, one can show that a coderivation $D \in \coDer(TV)$ is uniquely determined by $\pr _{V}\circ D \in \Hom(TV,V)$. In fact, for $q_{k}\in \Hom(V^{\otimes k},V)$, the map $q=\sum_{k}q_k \in \Hom(TV,V)$ determines a coderivation $\Tcoder q$ by the formulas
\[\Tcoder q|_{V^{\otimes n}}=\sum_{i+j+k=n}  \id^{\otimes i}\otimes q_{k} \otimes \id^{\otimes j}: V^{\otimes n} \to V^{\otimes n-k+1} .\] 
Since the space of coderivations with the graded commutator is a graded Lie algebra, the space $\Hom(TV,V)$ is also equipped with a Lie bracket 
\begin{equation*}
[f,g] := \pr _{V}\circ (\Tcoder f\circ \Tcoder g -(-1)^{\degree{f}\degree{g}}\Tcoder g \circ \Tcoder f)
\end{equation*}
where $f,g \in \Hom(TV,V)$ are arbitrary homogeneous maps.

\subsubsection*{Tensor coalgebra over a shifted dg algebra}

Let $A$ be a dg algebra equipped with differential $d_{A}$ and multiplication $\mu _{A}$. 
Following \cite{MR1261901}, we denote 
$$
m_1(\dsuspend a_1):= \dsuspend d_A(a_1), \qquad 
m_2( \dsuspend a_1, \dsuspend a_2):= (-1)^{|a_1|} \dsuspend \mu_A(a_1, a_2),  
$$
where $a_1, a_2 \in A$, and $\dsuspend: A \to A[1]$ is the degree-shifting map of degree $-1$. Let $\Tcoder m \in \coDer(T(A[1]))$ be the coderivation generated by $m := m_1+m_2 \in \Hom^1(T(A[1]),A[1])$. Since the dg algebra axioms $$
d_A^2=0, \qquad d_A \circ \mu_A = \mu_A\circ(d_A \otimes \id + \id \otimes d_A), \qquad \mu_A \circ(\mu_A\otimes \id) = \mu_A \circ (\id \otimes \mu_A)
$$ 
are equivalent to 
$$
\pr_{A[1]} \circ [\Tcoder m, \Tcoder m] \big|_{A[1]} =0, \qquad \pr_{A[1]} \circ [\Tcoder m, \Tcoder m] \big|_{A[1]^{\otimes 2}} =0, \qquad \pr_{A[1]} \circ [\Tcoder m, \Tcoder m] \big|_{A[1]^{\otimes 3}} =0,
$$
respectively, one has the equation 
$$
[\Tcoder m, \Tcoder m] =0.
$$ 
Therefore, we have

\begin{proposition}
The triple
$
\big(\Hom(T(A[1]),A[1]), [m, \argument], [\argument, \argument]\big)
$
is a dg Lie algebra. 
\end{proposition}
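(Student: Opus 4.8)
The plan is to transport the entire statement through the correspondence $q \mapsto \Tcoder{q}$ between $\Hom(T(A[1]),A[1])$ and the coderivations $\coDer(T(A[1]))$. First I would record that $\coDer(T(A[1]))$, equipped with the graded commutator of linear endomorphisms, is a graded Lie algebra: the graded commutator of two coderivations is again a coderivation (a routine verification using the coproduct $\Delta_T$), while the graded commutator automatically satisfies graded antisymmetry and the graded Jacobi identity. Next, using that a coderivation is uniquely determined by its corestriction $\pr_{A[1]}\circ D$ (the cogenerator argument recalled above), the corestriction map $\coDer(T(A[1]))\to \Hom(T(A[1]),A[1])$ is a linear bijection whose inverse is precisely $q\mapsto \Tcoder{q}$. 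By the very definition of the bracket, $\Tcoder{[f,g]}=[\Tcoder{f},\Tcoder{g}]$, so $q\mapsto\Tcoder{q}$ is an isomorphism of graded Lie algebras; in particular $\big(\Hom(T(A[1]),A[1]),[\argument,\argument]\big)$ is a graded Lie algebra, which settles the underlying Lie bracket.

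It then remains to verify that $[m,\argument]$ is a square-zero differential of degree $+1$ that is compatible with the bracket. Since $m\in\Hom^1(T(A[1]),A[1])$, the operator $[m,\argument]$ has degree $+1$. I would deduce $[m,[m,f]]=0$ from the graded Jacobi identity applied to the triple $(m,m,f)$, which yields $2[m,[m,f]]=[[m,m],f]$; the right-hand side vanishes because $[m,m]=0$, this last relation being exactly the reformulation of the dg algebra axioms established above (they are equivalent to $[\Tcoder{m},\Tcoder{m}]=0$, hence to $[m,m]=\pr_{A[1]}\circ[\Tcoder{m},\Tcoder{m}]=0$). The graded Leibniz rule for $[m,\argument]$ is, once more, nothing but the graded Jacobi identity with $m$ placed in the first slot, namely $[m,[f,g]]=[[m,f],g]+(-1)^{\degree{f}}[f,[m,g]]$, which is the derivation property for a degree-$+1$ operator. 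Together these exhibit $[m,\argument]$ as a square-zero degree-$+1$ derivation of $[\argument,\argument]$, completing the proof.

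The only genuinely non-formal points are the closure of $\coDer(T(A[1]))$ under the graded commutator and the compatibility $\Tcoder{[f,g]}=[\Tcoder{f},\Tcoder{g}]$, both of which rest on the uniqueness-by-corestriction property of coderivations of a tensor coalgebra. I expect this to be the main obstacle, while the remaining assertions follow formally from the graded Jacobi identity and the already-verified relation $[m,m]=0$.
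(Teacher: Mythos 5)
Your proposal is correct and follows essentially the same route as the paper: the graded Lie structure on $\Hom(T(A[1]),A[1])$ is transported from the coderivations $\coDer(T(A[1]))$ via the uniqueness-by-corestriction bijection $q\mapsto \Tcoder{q}$, the dg algebra axioms for $(A,d_A,\mu_A)$ are repackaged as $[\Tcoder{m},\Tcoder{m}]=0$ (equivalently $[m,m]=0$), and the square-zero and Leibniz properties of $[m,\argument]$ then follow formally from the graded Jacobi identity. The paper leaves these last formal steps implicit, and your write-up supplies exactly the details it intends.
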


Let $M_1 = [m, \argument ]$, and 
$M_2:\Hom(T(A[1]),A[1])^{\otimes 2} \to \Hom(T(A[1]),A[1])$ 
be the operation 
$$
M_2(D_1,D_2):= m_2 \circ (D_1 \otimes D_2) \circ \Delta_T
$$ 
of degree one. One can show that 
\begin{equation}\label{eq:Ainfty[1]onHHcx}
[\Tcoder M, \Tcoder M] = 0,
\end{equation}
where $M= M_1 +M_2 \in \Hom\big(T\Hom(T(A[1]),A[1]),T\Hom(T(A[1]),A[1])\big)$. See \cite[Proposition~1.7]{MR1261901}.

Let 
$\widehat\hochschild: \Hom(T(A[1]),A) \to \Hom(T(A[1]),A)$ and 
$\widehat\cup: \Hom(T(A[1]),A)^{\otimes 2} \to \Hom(T(A[1]),A)$ be the unique maps satisfying the equations 
$$
M_1 \circ \dsuspend = \dsuspend \circ  \widehat\hochschild , \qquad 
M_2 \circ ( \dsuspend \otimes \dsuspend ) =  \dsuspend\circ \widehat\cup,  
$$
where $\dsuspend: \Hom(T(A[1]),A) \to \Hom(T(A[1]),A[1])$ is the degree-shifting map. By \eqref{eq:Ainfty[1]onHHcx}, we have the following 
 
\begin{proposition}
The triple $\big(\Hom(T(A[1]),A), \widehat\hochschild, \widehat\cup \big)$ is a dg algebra.  
\end{proposition}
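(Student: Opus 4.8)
The plan is to extract, from the master identity \eqref{eq:Ainfty[1]onHHcx}, the three structural relations that characterize a dg algebra, and then to transport them along the degree-shifting isomorphism $\dsuspend$.

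First I would invoke the cogenerator principle for the tensor coalgebra $TV$ with $V=\Hom(T(A[1]),A[1])$: exactly as in the characterization of $\Tcoder q$ recalled above, a coderivation of $TV$ is determined by its corestriction $\pr_{V}\circ(\argument)$, so that $[\Tcoder M,\Tcoder M]=0$ is equivalent to the vanishing of $\pr_{V}\circ[\Tcoder M,\Tcoder M]$ on each $V^{\otimes n}$. Since $M=M_1+M_2$ is concentrated in arities $1$ and $2$, the bracket $[\Tcoder M,\Tcoder M]=2\,\Tcoder M^2$ has corestriction supported in arities $1,2,3$, and setting these three components to zero yields, respectively,
\begin{gather*}
M_1 \circ M_1 = 0, \\
M_1 \circ M_2 = M_2 \circ (M_1 \otimes \id + \id \otimes M_1), \\
M_2 \circ (M_2 \otimes \id) = M_2 \circ (\id \otimes M_2),
\end{gather*}
up to the Koszul signs dictated by the degrees of $M_1$ and $M_2$. (The arity-$1$ relation also follows directly from $M_1=[m,\argument]$ together with $[m,m]=0$.) These are precisely the axioms of a differential $M_1$, a graded Leibniz rule for $M_1$ against $M_2$, and the associativity of $M_2$ --- that is, a dg algebra structure on $V$ in the shifted degree conventions inherited from $A[1]$, in which both $M_1$ and $M_2$ carry degree $+1$.

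Next I would transport this structure along $\dsuspend$. Writing $\Sigma$ for postcomposition with $\dsuspend$, i.e.\ $\Sigma(f)=\dsuspend\circ f$, the map $\Sigma:\Hom(T(A[1]),A)\to\Hom(T(A[1]),A[1])$ is an isomorphism of graded vector spaces of degree $-1$, and the defining equations $M_1\circ\dsuspend=\dsuspend\circ\widehat\hochschild$ and $M_2\circ(\dsuspend\otimes\dsuspend)=\dsuspend\circ\widehat\cup$ read $\widehat\hochschild=\Sigma^{-1}\circ M_1\circ\Sigma$ and $\widehat\cup=\Sigma^{-1}\circ M_2\circ(\Sigma\otimes\Sigma)$. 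A degree count then shows $\widehat\hochschild$ has degree $+1$ and $\widehat\cup$ degree $0$, as required for a genuine dg algebra. Conjugating the three relations above through $\Sigma$ converts them into $\widehat\hochschild\circ\widehat\hochschild=0$, the Leibniz identity $\widehat\hochschild(f\widehat\cup g)=\widehat\hochschild f\widehat\cup g+(-1)^{\degree{f}}f\widehat\cup\widehat\hochschild g$, and the strict associativity $(f\widehat\cup g)\widehat\cup h=f\widehat\cup(g\widehat\cup h)$.

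The main obstacle is the sign bookkeeping in this last conjugation --- the classical décalage. Because $\dsuspend$ has degree $-1$ while both $M_1$ and $M_2$ carry degree $+1$, inserting and removing the shift maps introduces Koszul signs $(-1)^{\degree{f}}$ at precisely the tensor slots where $\dsuspend$ is commuted past a homogeneous argument; the crux is to verify that these signs assemble exactly so that the shifted Leibniz and associativity relations for $(M_1,M_2)$ become the standard-sign relations for $(\widehat\hochschild,\widehat\cup)$. Once this is checked slot by slot --- using that $\widehat\hochschild$ and $\widehat\cup$ are, by the defining equations, the \emph{unique} operations making $\Sigma$ and $\Sigma\otimes\Sigma$ intertwine the shifted and unshifted structures --- the three dg algebra axioms hold and the proposition follows.
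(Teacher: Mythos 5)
Your proof is correct and takes essentially the same route as the paper: the paper deduces the proposition directly from the identity $[\Tcoder M, \Tcoder M]=0$ of \eqref{eq:Ainfty[1]onHHcx} together with the defining relations $M_1\circ\dsuspend = \dsuspend\circ\widehat\hochschild$ and $M_2\circ(\dsuspend\otimes\dsuspend)=\dsuspend\circ\widehat\cup$, leaving the arity-wise extraction and the sign transport implicit. You have simply spelled out those details (the cogenerator principle applied to $T\Hom(T(A[1]),A[1])$ and the d\'ecalage sign check), which is exactly the argument the paper intends.
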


\begin{remark}
More generally, if $A$ is an $A_\infty$ algebra, then so is $\Hom(T(A[1]),A)$. The construction is closely related to the braces operations on $\Hom(T(A[1]),A[1])$. See \cite{MR1328534,1994hep.th....3055G, MR1261901}. 
\end{remark}

\subsection{Hochschild cochains and coderivations}

Let $(A, d_{A}, \mu _{A})$ be a dg algebra. Let 
\[\dec:\Hom^{r}(A^{\otimes p},A) \rightarrow \Hom^{p+r}(A[1]^{\otimes p},A)\]
be the d\'ecalage map defined as
\[\dec(f)(\dsuspend a_{1}\otimes \cdots \otimes \dsuspend a_{p})= (-1)^{\sum_{i}(p-i)\degree{a_{i}}}  f(a_{1}\otimes \cdots \otimes a_{p})\]
for $f\in \Hom^{r}(A^{\otimes p},A)$ and $a_{1},\cdots,a_{n}\in A$. 
In other words, $\dec(f) \in \Hom^{p+r}(A[1]^{\otimes p},A)$ is the unique map such that the diagram
$$
\begin{tikzcd}
A[1]^{\otimes p} \ar[r,"\dec(f)"] & A \\
A^{\otimes p} \ar[ru,"f"'] \ar[u,"\dsuspend^{\otimes p}"] &
\end{tikzcd}
$$
commutes. 
Note that $m_1 = \dsuspend \circ \dec(d_A)$ and $m_2 = \dsuspend\circ \dec(\mu_A)$.

\begin{proposition}
The map 
$$
\dsuspend\circ \dec\circ\dsuspend\inv: (\grHHcx^\bullet(A)[1],\gerstenhaber\argument\argument) \to (\Hom(T(A[1]),A[1]), [\argument,\argument])
$$
is an embedding of graded Lie algebras. In particular, 
$$
\dec  \circ (\hochschild + \partial_A) = \widehat{\hochschild} \circ \dec. 
$$  
\end{proposition}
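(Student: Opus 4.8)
The plan is to establish the statement in two independent parts: injectivity of $\Theta:=\dsuspend\circ\dec\circ\dsuspend\inv$, and the fact that $\Theta$ intertwines the Gerstenhaber bracket with the coderivation bracket; the displayed identity $\dec\circ(\hochschild+\partial_A)=\widehat\hochschild\circ\dec$ will then follow automatically by evaluating bracket-preservation on the distinguished cochain $\mu_A+d_A$. Injectivity is immediate and I would dispose of it first: on each summand $\dec$ is built from $f$ by precomposition with the isomorphism $\dsuspend^{\otimes p}:A^{\otimes p}\xto{\sim}A[1]^{\otimes p}$ together with an invertible sign $(-1)^{\sum_i(p-i)\degree{a_i}}$, so $\dec$ is injective on $\Hom^r(A^{\otimes p},A)$ and hence on $\grHHcx^\bullet(A)$; postcomposing with the invertible $\dsuspend$ keeps it injective. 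Both the source $(\grHHcx^\bullet(A)[1],\gerstenhaber{\argument}{\argument})$ and the target $(\Hom(T(A[1]),A[1]),[\argument,\argument])$ are already known to be graded Lie algebras (by Proposition~\ref{prop:Hochfordga} and the coderivation construction), so it remains only to verify the bracket identity.

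The conceptual key is to recognize that the target bracket, though defined via coderivations, is really a brace/pre-Lie bracket. I would first record the elementary identity that for $F,G\in\Hom(T(A[1]),A[1])$,
$$
\pr_{A[1]}\circ \Tcoder F\circ \Tcoder G = F\circ \Tcoder G,
$$
which holds because projecting to $A[1]$ after $\Tcoder F$ retains only the single-output component of the coderivation $\Tcoder F$, namely $F$ itself, while $\Tcoder G$ inserts $G$ into each tensor slot. Consequently
$$
[F,G]=F\circ \Tcoder G-(-1)^{\degree{F}\degree{G}}\,G\circ \Tcoder F,
$$
which is visibly the same shape as $\gerstenhaber{f}{g}=\sum_i\pm\,f\circ_i g-(-1)^{\cdots}\sum_j\pm\,g\circ_j f$. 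Thus the morphism property reduces to the single identity
$$
\dsuspend\dec\,\gerstenhaber{f}{g}=[\dsuspend\dec f,\ \dsuspend\dec g]
$$
for all Hochschild cochains $f,g$.

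The heart of the argument, and the step I expect to be the main obstacle, is the sign bookkeeping that upgrades the one-term composition $f\circ_i g$ to the brace insertion through $\dec$. I would prove the single-insertion compatibility
$$
\dsuspend\dec(f\circ_i g)=\pm\,\big(\dsuspend\dec f\big)\circ\big(\id^{\otimes i-1}\otimes \dsuspend\dec g\otimes\id^{\otimes p_1-i}\big)
$$
by unwinding both sides on a tensor $\dsuspend a_1\otimes\cdots\otimes\dsuspend a_{p_1+p_2-1}$ and comparing the décalage weights $(-1)^{\sum(p-i)\degree{a_i}}$ against the Koszul signs produced when the degree-$(p_2+r_2-1)$ map $\dsuspend\dec g$ is pushed past the first $i-1$ tensor factors inside $\Tcoder{\dsuspend\dec g}$. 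The point is that this shift in degree is exactly what turns the positional sign $(-1)^{(i-1)(p_2-1)}$ appearing in \eqref{eq:GerBracket} into the insertion sign on the target; summing over $i$ (and symmetrically over $j$) then matches $\dsuspend\dec\,\gerstenhaber{f}{g}$ with $[\dsuspend\dec f,\dsuspend\dec g]$. I expect this to be mechanical once the décalage conventions are pinned down, but it is where every delicate sign lives; the cleanest presentation is to phrase it as the assertion that $\dec$ is the standard décalage isomorphism relating the Gerstenhaber and coderivation descriptions, so that the signs are forced by naturality rather than checked slot by slot.

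Finally, the displayed identity is a specialization. Since $m_1=\dsuspend\circ\dec(d_A)$ and $m_2=\dsuspend\circ\dec(\mu_A)$ we have $m=\dsuspend\dec(\mu_A+d_A)$, and $\hochschild+\partial_A=\gerstenhaber{\mu_A+d_A}{\argument}$ on $\grHHcx^\bullet(A)$. Applying the identity $\dsuspend\dec\,\gerstenhaber{f}{g}=[\dsuspend\dec f,\dsuspend\dec g]$ with $f=\mu_A+d_A$ gives, for every cochain $g$,
$$
\dsuspend\dec\big((\hochschild+\partial_A)(g)\big)=[m,\ \dsuspend\dec g]=M_1(\dsuspend\dec g)=\dsuspend\,\widehat\hochschild(\dec g),
$$
where the last equality is the defining relation $M_1\circ\dsuspend=\dsuspend\circ\widehat\hochschild$. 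Composing with $\dsuspend\inv$ yields $\dec\circ(\hochschild+\partial_A)=\widehat\hochschild\circ\dec$, completing the proof.
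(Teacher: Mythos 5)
Your proposal is correct and takes essentially the same route as the paper: the paper's entire proof consists precisely of the single-insertion sign verification you isolate as the key step, namely that $(\dsuspend \dec f) \circ \big(\id^{\otimes i-1} \otimes (\dsuspend \dec g) \otimes \id^{\otimes p_1-i}\big) \circ \dsuspend^{\otimes p_1+p_2-1} = (-1)^{(i-1)(p_2-1)+(p_1-1)r_2}\, \dsuspend \circ f \circ (\id^{\otimes i-1} \otimes g \otimes \id^{\otimes p_1-i})$, after which summing via \eqref{eq:GerBracket} gives $[\dsuspend\dec f,\dsuspend\dec g]\circ \dsuspend^{\otimes p_1+p_2-1} = \dsuspend\circ\gerstenhaber{f}{g}$. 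The extra scaffolding you supply (injectivity, the brace form $[F,G]=F\circ\Tcoder{G}-(-1)^{\degree{F}\degree{G}}\,G\circ\Tcoder{F}$, and the explicit specialization to $\mu_A+d_A$ for the displayed identity) is left implicit in the paper but is fully consistent with its conventions, and the signs you defer as ``mechanical'' are exactly the ones the paper checks.
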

\begin{proof}
Let $f \in \HHcx^{p_1,r_1}(A)$ and $g \in \HHcx^{p_2,r_2}(A)$. Since 
\begin{align*}
& (\dsuspend \dec  f) \circ \big(\id^{\otimes i-1} \otimes (\dsuspend \dec  g ) \otimes \id^{\otimes p_1-i}\big) \circ \dsuspend^{\otimes p_1+p_2-1}  \\
& \qquad = (-1)^{(i-1)(p_2+r_2-1)}(\dsuspend \dec  f) \circ \big(\dsuspend^{\otimes i-1} \otimes (\dsuspend \dec  g  \circ \dsuspend^{\otimes p_2}) \otimes \dsuspend^{\otimes p_1-i}\big) \\
& \qquad = (-1)^{(i-1)(p_2+r_2-1)+(p_1-i)r_2}(\dsuspend \dec  f) \circ \dsuspend^{\otimes p_1} \circ (\id^{\otimes i-1} \otimes  g \otimes \id^{\otimes p_1-i}) \\
& \qquad = (-1)^{(i-1)(p_2-1)+(p_1-1)r_2}\, \dsuspend  \circ f \circ (\id^{\otimes i-1} \otimes  g \otimes \id^{\otimes p_1-i}),
\end{align*}
it follows from \eqref{eq:GerBracket} that 
$$
[\dsuspend \dec  f, \dsuspend \dec  g]  \circ \dsuspend^{\otimes p_{1}+p_{2}-1}  = \dsuspend \circ \gerstenhaber f g ,
$$
which implies the assertion.
\end{proof}

\begin{proposition}
The map 
$$
\dec: (\grHHcx^\bullet(A),\hochschild+\partial_A ,\cup \, ) \to (\Hom(T(A[1]),A),\widehat\hochschild  ,\widehat\cup \, )
$$
is an embedding of dg algebras. 
\end{proposition}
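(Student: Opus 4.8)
The plan is to check the three defining properties of a dg-algebra embedding, two of which come essentially for free. Injectivity is immediate: on each component $\HHcx^{p,r}(A)$ the cochain $f$ is recovered from $\dec(f)$ by precomposing with $\dsuspend^{\otimes p}$ (an isomorphism) and multiplying by the décalage sign $(-1)^{\sum_i(p-i)\degree{a_i}}=\pm1$, and distinct components land in distinct tensor-degree slots of $\Hom(T(A[1]),A)$. Compatibility with the differentials, $\dec\circ(\hochschild+\partial_A)=\widehat\hochschild\circ\dec$, is precisely the ``in particular'' clause of the preceding proposition; concretely $\hochschild+\partial_A=\gerstenhaber{\mu_A+d_A}{\argument}$ while $\widehat\hochschild$ is generated by $m=m_1+m_2$ with $m_1=\dsuspend\circ\dec(d_A)$ and $m_2=\dsuspend\circ\dec(\mu_A)$, so the graded Lie algebra statement specializes to the one about differentials.

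The only genuinely new point is the multiplicativity
$$
\dec(f\cup g)=\dec(f)\,\widehat\cup\,\dec(g),\qquad f\in\HHcx^{p_1,r_1}(A),\ g\in\HHcx^{p_2,r_2}(A).
$$
I would unwind the right-hand side using the defining relation $\dsuspend\circ\widehat\cup=M_2\circ(\dsuspend\otimes\dsuspend)$ together with $M_2(D_1,D_2)=m_2\circ(D_1\otimes D_2)\circ\Delta_T$, being careful that applying the degree $-1$ map $\dsuspend\otimes\dsuspend$ to $\dec(f)\otimes\dec(g)$ produces a Koszul factor $(-1)^{\degree{f}}$. Evaluating on a decomposable element $\dsuspend a_1\otimes\cdots\otimes\dsuspend a_{p_1+p_2}$, only the single deconcatenation term of $\Delta_T$ cutting after the $p_1$-th slot survives, because $\dsuspend\dec(f)$ and $\dsuspend\dec(g)$ are supported on $A[1]^{\otimes p_1}$ and $A[1]^{\otimes p_2}$; what remains is $f(a_1,\dots,a_{p_1})\cdot g(a_{p_1+1},\dots,a_{p_1+p_2})$ carrying a sign assembled from the two décalage signs, the tensor-of-maps Koszul sign, the sign $(-1)^{\degree{f(a_1,\dots,a_{p_1})}}$ built into $m_2$, and the factor $(-1)^{\degree{f}}$ just mentioned. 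The left-hand side $\dec(f\cup g)$ is computed directly from the décalage formula and the cup-product formula \eqref{eq:CupProd}.

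The crux, and the only real obstacle, is then a single sign identity modulo $2$. Collecting exponents, the difference of the two accumulated signs (before accounting for the Koszul factor from $\dsuspend\otimes\dsuspend$) telescopes to $-2p_1p_2-2r_2p_1+p_1+r_1\equiv p_1+r_1\pmod 2$; the factor $(-1)^{\degree{f}}=(-1)^{p_1+r_1}$ cancels this residue exactly, so the two signs coincide and the identity holds. Hence $\dec$ respects the cup products, and combined with injectivity and differential-compatibility it is an embedding of dg algebras. The entire difficulty lies in not dropping the suspension/décalage signs—above all the Koszul sign created by the degree $-1$ map $\dsuspend\otimes\dsuspend$, whose omission would leave a spurious $(-1)^{\degree{f}}$.
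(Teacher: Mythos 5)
Your proposal is correct and takes essentially the same route as the paper: unwind $\widehat\cup$ through $M_2$, $m_2$ and $\Delta_T$, observe that only the deconcatenation at position $p_1$ survives, and match signs against \eqref{eq:CupProd}, with injectivity and the differential compatibility disposed of exactly as you do (the latter being the ``in particular'' clause of the preceding proposition). The only difference is bookkeeping: the paper stays at the level of compositions of maps, using $m_2\circ(\dsuspend\otimes\dsuspend)=\dsuspend\circ\mu_A$ so that the Koszul factor $(-1)^{p_1+r_1}$ you track never appears explicitly, whereas you evaluate on elements and check that this factor cancels the residual sign $-2p_1p_2-2r_2p_1+p_1+r_1$ --- the same computation, and your sign arithmetic is right.
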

\begin{proof}
Let $f \in \HHcx^{p_1,r_1}(A)$ and $g \in \HHcx^{p_2,r_2}(A)$. Since 
\begin{align*}
\dsuspend (\dec f \, \widehat{\cup} \dec g) & = M_2 \circ (\dsuspend \otimes \dsuspend)\circ (\dec f \otimes \dec g) \\
& =  m_2\circ (\dsuspend \otimes \dsuspend) \circ (\dec f \otimes \dec g) \circ \Delta_T \\
& =   \dsuspend \circ \mu_A \circ (\dec f \otimes \dec g) \circ \Delta_T,
\end{align*}
we have 
\begin{align*}
(\dec f \, \widehat{\cup} \dec g)\circ (\dsuspend^{\otimes p_1+p_2})& = \mu_A \circ (\dec f \otimes \dec g) \circ \Delta_{TA[1]} \circ (\dsuspend^{\otimes p_1+p_2})\\
 & = \mu_A \circ (\dec f \otimes \dec g) \circ (\dsuspend^{\otimes p_1} \otimes \dsuspend^{\otimes p_2})\circ \Delta_{TA} \\
&  = (-1)^{(p_2+r_2)p_1} \mu_A \circ(f \otimes g) \circ \Delta_{TA},
\end{align*}
where $\Delta_{TA[1]}$ is the coproduct on $TA[1]$, and $\Delta_{TA}$ is the coproduct on $TA$. Thus, by comparing the above formula with \eqref{eq:CupProd}, we conclude 
$$
(\dec f \, \widehat{\cup} \dec g)\circ (\dsuspend^{\otimes p_1+p_2}) = f \cup g,
$$
which implies the assertion.
\end{proof}

\bibliography{ref_KellerDuflo}

\end{document}